\documentclass[12pt,twoside]{article}
\usepackage{amssymb}
\usepackage{amsmath}
\usepackage{cases}
\usepackage{txfonts}
\usepackage{amsfonts}
\usepackage{amsmath,amssymb}
\usepackage{multicol}
\usepackage{color}
\usepackage{graphicx}
\usepackage{graphics}


\def\R{\mathbb{R}}

\def\epsilon{\varepsilon}

\newcommand{\be}{\begin{equation}}
\newcommand{\ee}{\end{equation}}
\newcommand{\baa}{\begin{array}}
\newcommand{\eaa}{\end{array}}
\newcommand{\ba}{\begin{eqnarray}}
\newcommand{\ea}{\end{eqnarray}}

\newtheorem{theorem}{Theorem}[section]
\newtheorem{lemma}[theorem]{Lemma}
\newtheorem{corollary}[theorem]{Corollary}

\newtheorem{definition}[theorem]{Definition}
\newtheorem{remark}[theorem]{Remark}

\newtheorem{proposition}[theorem]{Proposition}
\numberwithin{equation}{section}
\newenvironment{proof}[1][Proof]{\noindent\textbf{#1.} }{\hfill $\Box$}
\allowdisplaybreaks

\makeatletter
\setlength{\textwidth}{17cm}
\setlength{\oddsidemargin}{0cm}
\setlength{\evensidemargin}{0cm}
\setlength{\textheight}{22.0cm}
\headheight = -1.5cm


\begin{document}
\date{}
\title{\bf{ Some new  bistable transition fronts with changing shape}}
\author{Hongjun Guo  \textsuperscript{a}\footnote{Corresponding author. Email: guohj29@tongji.edu.cn}, Kelei Wang \textsuperscript{b}\footnote{Email: wangkelei@whu.edu.cn}\\
		\\
		\footnotesize{ \textsuperscript{a} School of Mathematical Sciences, Key Laboratory of Intelligent Computing and }\\
		\footnotesize{Applications (Ministry of Education), Institute for Advanced Study, Tongji University, Shanghai, China}\\
		\footnotesize{\textsuperscript{b} School of Mathematics and Statistics, Wuhan University, Wuhan, China}}
\maketitle

\begin{abstract}\noindent{}We construct entire solutions of bistable reaction-diffusion equations by mixing finite planar fronts, which form a finite-dimensional manifold. These entire solutions are generalized traveling fronts, that is, transition fronts. We also show their uniqueness and stability. Furthermore, we prove that transition fronts with level sets having finite facets are determined by finite planar fronts and they are in the class of entire solutions constructed by us.
\noindent{}
\vskip 0.1cm
\noindent\textit{Keywords: Entire solutions; Planar fronts; Transition fronts; Uniqueness; Stability.}

\noindent\textit{MSC2020: 35B08; 35C10; 35A09; 35K15.}
\end{abstract}


\section{Introduction}
We are interested in entire solutions of the classical reaction-diffusion equation 
	\be\label{RD}
	\partial_tu-\Delta u=f(u),\quad \text{in}~  \R\times\R^N,
	\ee
	where $N\ge 2$, $0\leq u \leq 1$. The reaction term $f(u)\in C^1([0,1])$ is assumed to be of bistable, that is, there is $\theta\in (0,1)$ such that
	\be\label{F}
	f(0)=f(\theta)=f(1),\ f(u)<0 \hbox{ in $(0,\theta)$},\ f(u)>0 \hbox{ in $(\theta,1)$},\ f^\prime(0)<0 \hbox{ and } f^\prime(1)<0.
	\ee
 It also satisfies the  unbalanced condition,  $\int_0^1 f(u)du\neq 0$. Without loss of generality, we always assume that
	\be\label{signf}
	\int_0^1 f(u)du> 0.
	\ee
	A typical model is $f(u)=u(u-\theta)(1-u)$.
	Such an equation arises in various mathematical modeling in biology, ecology, material sciences and so on.

We will construct  entire solutions $u$ which are monotone in a direction $e\in \mathbb{S}^{N-1}$, that is,
	\be\label{DMonotone}
	e\cdot \nabla u<0.
	\ee
 A typical solution satisfying \eqref{DMonotone} is the planar traveling front $g(x\cdot e-c_f t)$, where $e\in\mathbb{S}^{N-1}$, $g$ and $c_f$ satisfy the one-dimensional equation
	\begin{eqnarray}\label{PF}
		\left\{\begin{array}{lll}
			-c_fg^\prime-g^{\prime\prime}-f(g)=0,\\
			g(-\infty)=1,\ g(+\infty)=0.
		\end{array}
		\right.
	\end{eqnarray} 
	The existence of $(g,c_f)$ is given by Fife and McLeod \cite{FM}. The function $g$ is called profile and the constant $c_f$ is called speed. The speed $c_f$ has the same sign with $\int_0^1 f(u)du$. So $c_f>0$ under our assumption \eqref{signf}. The profile $g$ and the speed $c_f$ are uniquely determined by the reaction term $f$ in the sense that  any other traveling front $(g,c)$ satisfies $g(\cdot)=g(\cdot+\tau)$ for some $\tau\in\R$ and $c=c_f$.   Moreover, $g$ is a decreasing function, see \cite{FM}.  In the sequel, we fix $g$ by setting $g(0)=1/2$. Level sets of the planar front $g(x\cdot e- c_f t)$ are hyperplanes $\{x\in\R^N;\ x\cdot e -c_f t=g^{-1}(\lambda)\}$ (for any $\lambda\in (0,1)$). 
 
 Non-planar fronts of \eqref{RD} under \eqref{signf} are also known to exist in dimensions $N\ge 2$. In \cite{HMR}, Hamel, Monneau and Roquejoffre constructed conical shaped front $u(t,x)=\phi(|x^\prime|,x_N-c t)$, where $x^\prime=(x_1,\cdots,x_{N-1})$ and $c$ can be any constant larger than $c_f$. The conical front satisfies
	\begin{eqnarray*}
		\left\{\begin{array}{lll}
			\phi(r,y)\rightarrow 1 \hbox{ (resp. $0$) uniformly as } y-\psi(r)\rightarrow -\infty \hbox{ (respectively $+\infty$)},\\
			\psi^\prime(+\infty)=\frac{\sqrt{c^2-c_f^2}}{c_f},
		\end{array}
		\right.
	\end{eqnarray*}
	for some $C^1$ function $\psi: [0,+\infty)\rightarrow \R$. In particular, level sets of the conical front are not hyperplanes. For $N=2$, the existence of conical fronts is also proved by Ninomiya and Taniguchi \cite{NT1,NT2} by the method of  sub- and supersolutions. In this case, level sets are asymptotic to rays, that is,
	\[\psi(x_1)-\frac{\sqrt{c^2-c_f^2}}{c_f}|x_1|\rightarrow 0, \hbox{ as } |x_1|\rightarrow +\infty.\]
	In particular, level sets are V-shaped. So the conical front in dimension $N=2$ is also called V-shaped front. If one lets $\alpha\in (0,\pi/2)$ such that $\sin\alpha=c_f/c$, the V-shaped front $V(t,x)=\phi(|x_1|,x_2-ct)$ satisfies
	\be\label{VF}
	\lim_{R\rightarrow +\infty} \sup_{x_1^2+(x_2-ct)^2>R^2} \left|V(t,x)-g(-|x_1|\cos\alpha+x_2\sin\alpha-c_f t)\right|=0. 
	\ee
	It means that the V-shaped front  converges to planar fronts along asymptotic lines $x_2=|x_1|\cot\alpha+c_f t$. We refer to this property as ``asymptotic one-dimensional symmetry". For dimensions $N\ge 3$, pyramidal shaped fronts of \eqref{RD} are constructed by Taniguchi \cite{T1} for $N=3$ and Kurokawa and Taniguchi \cite{KT} for $N\ge 4$. Unlike conical fronts, the pyramidal fronts are non-axisymmetric in $x$. For pyramidal fronts, their level sets are asymptotic to hyperplanes. For instance, \eqref{RD} admits a unique pyramidal front $U$ satisfying
	\[\max_{1\le i\le n}\left\{g(x^\prime\cdot e_i^\prime \cos\alpha +x_N\sin\alpha -c_f t)\right\}< U(t,x)<1, \hbox{ for $(t,x)\in\R\times\R^N$},\]
where $x=(x^\prime,x_N)$ with $x^\prime\in\R^{N-1}$	and
	\be\label{PyF}
	\lim_{\gamma\rightarrow +\infty} \sup_{(t,x)\in D(\gamma)} \Big|U(t,x) -\max_{1\le i\le n} \left\{g(x^\prime\cdot e_i^\prime \cos\alpha +x_N\sin\alpha -c_f t)\right\}\Big|=0,
	\ee
	where $\alpha=\arcsin (c_f/c)$ for some $c>c_f$, $e_i^\prime\in\mathbb{S}^{N-2}$, $D(\gamma)=\{(t,x)\in \R\times\R^{N}; dist((t,x),\Gamma)>\gamma\}$ and $\Gamma$ is the set of ridges of the pyramid 
	\[
	\left\{(t,x)\in\R\times\R^N; \min_{1\le i\le n} \{x^\prime\cdot e_i^\prime \cos\alpha +x_N\sin\alpha-c_f t\}\ge 0\right\}.
	\]
	See \cite{KT,T1,T2} for details. The notion ``ridge" will be clarified later. Level sets of the pyramidal front  look like the surface of the pyramid. It satisfies the asymptotic one-dimensional symmetry property, that is, along one facet of the surface of the pyramid and away from other facets, the pyramidal front $U$  converges to a planar front. The pyramidal front could be generalized such that every planar front contains a shift, see \cite{T1}. 
 
 These fronts recalled here all satisfy the monotone property \eqref{DMonotone}  and all of them are traveling fronts, that is, they have the form $\phi(x^\prime,x_N-ct)$. So level sets of these fronts are invariant in time. More results about the symmetry, stability, uniqueness and other qualitative properties of axisymmetric and non-axisymmetric traveling fronts can be found in \cite{HM,HMR2,RR,T2,T3}. 
	
However, there are plenty of entire solutions other  than traveling fronts, even if under the monotone condition \eqref{DMonotone}. In \cite{H}, Hamel constructed an entire solution with varying level sets in dimension $N=2$, which can also be viewed as solutions in dimensions $N\ge 3$ by a trivial extension.  This entire solution is constructed by rotating the V-shaped front such that one ray of the level set is horizontal and then attaching the reflected V-shaped front to it. This entire solution is axially symmetric and propagates as three planar fronts in three directions. But it seems that this method solely can hardly be extended to higher dimensions if we want to mix  more than three planar fronts. Instead, by combining  Hamel's idea of mixing planar fronts   with the construction of V-shaped fronts and pyramidal fronts and utilizing the  asymptotic one-dimensional symmetry of these solutions, we succeed in constructing entire solutions by mixing any finitely many planar fronts so that the  solution still satisfies the monotone condition \eqref{DMonotone}.  Moreover, we show that these solutions can be characterized by having finitely many planar fronts.

\section{Main results}
Let us first introduce some notations. For any $e\in\mathbb{S}^{N-1}$ and $\tau\in\R$,  denote the hyperplane  
	\[P(e,\tau)=\left\{(t,x)\in\R\times\R^N;\ x\cdot e -c_f t+\tau=0\right\},\]
which is also viewed as a hyperplane in $\R^N$ moving in time. Every pair $(e,\tau)$   determines a planar front $g(x\cdot e -c_f t+\tau)$. For any fixed $e\in\mathbb{S}^{N-1}$, all planar fronts $g(x\cdot e-c_ft +\tau)$ for $\tau\in\R$ form a one-dimensional manifold. Now, take $n\ge 2$ pairs $(e_i,\tau_i)$ of $\mathbb{S}^{N-1}\times\R$ such that 
\begin{itemize}
    \item $e_i\neq e_j$ for any $i\neq j$;
    \item there exists an $e_0\in\mathbb{S}^{N-1}$ such that for all $i$, $e_i\cdot e_0>0$.
\end{itemize}
The first condition means that the propagation directions of planar fronts
$g(x\cdot e_i -c_f t +\tau_i)$   and  $g(x\cdot e_j -c_f t +\tau_j)$
are not the same. Let $P_i$ denote the hyperplane $P(e_i,\tau_i)$ for short. Since $e_i\neq e_j$ for $i\neq j$ and $e_i\cdot e_0>0$ for all $i$, $P_i$ and $P_j$ are not parallel. Let $\mathcal{P}(P_1,\cdots,P_n)$ be the  unbounded polytope enclosed by $P_1$, $\cdots$, $P_n$, that is,
	\[\mathcal{P}(P_1,\cdots,P_n):=\{(t,x)\in\R\times\R^N;\ \min_{1\le i\le n}\{x\cdot e_i -c_f t +\tau_i\}\ge 0\}.\]
The unboundedness is ensured by the fact that $e_i\cdot e_0>0$ for all $i$.
We write $\mathcal{P}(P_1,\cdots,P_n)$ by $\mathcal{P}$ for short if there is no confusion. One can also regard $\mathcal{P}$ as a polytope in  $\R^N$ moving in time. By the definition of the polytope $\mathcal{P}$, one can  verify that $\mathcal{P}$ is convex in $\R^{N+1}$ and $\mathcal{P}_t$, its time slice at $t$, is convex in $\R^N$ for every $t$. The boundary  of $\mathcal{P}$ is
	\[\partial \mathcal{P}:=\left\{(t,x)\in\R\times\R^N;\ \min_{1\le i\le n} \{x\cdot e_i -c_f t+\tau_i\}=0\right\}.\]
	The joint part of $P_i$ and $\partial \mathcal{P}$ is called facet of the surface, which is denoted by $\widetilde{P}_i:=P_i\cap \partial\mathcal{P}$. If we regard $\mathcal{P}$ as a polytope in $\R^{N}$ moving in time, then there is $T\in\R$ such that for every $t\le T$, there are $n$ facets on the surface $\partial \mathcal{P}_t$, that is, the joint part of every $P_{i,t}$ and $\partial\mathcal{P}_t$ is not empty. But as $t$ increases, the number of facets may be less than $n$, that is, $\widetilde{P}_{i,t}=\emptyset$ for some $i\in \{1,\cdots,n\}$ if $t\gg 1$. Here we use  $S_t$ to denote the time slice of the set $S$ at time $t$. 
 
 The intersection of every two facets is called a ridge. Let $\mathcal{R}_{ij}=\widetilde{P}_i \cap \widetilde{P}_j$, $i\neq j$, that is, be the ridges. Notice that not every two facets intersect. Let $\mathcal{R}$ be the set of all ridges.

In the following, we say that the functions $u_p$ converge to a function $u_{p_0}$ as $p\rightarrow p_0\in \R^n$ in the sense of the topology $\mathcal{T}$ if for any compact set $K\subset \R^N$, the functions $u_p$, $\nabla u_p$, $\nabla^2 u_p$ and $\partial_t u_p$ converge uniformly in $K$ to $u_{p_0}$, $\nabla u_{p_0}$, $\nabla^2 u_{p_0}$ and $\partial_t u_{p_0}$ as $p\rightarrow p_0$. Now we can present our first main result.
	
	\begin{theorem}[Existence and Uniqueness]\label{Th1}
		Take $n$ vectors $e_i$ ($i=1,\cdots,n$) of  $\mathbb{S}^{N-1}$ such that $e_i\cdot e_0>0$ for a certain $e_0\in\mathbb{S}^{N-1}$ and $e_i\neq e_j$ for $i\neq j$. Take $n$ constants $\tau_i$ ($i=1,\cdots,n$). Then, $\{(e_i,\tau_i)\}_{i=1,\cdots,n}$ uniquely determines an entire solution $U(t,x;e_1,\cdots,e_n,\tau_1,\cdots,\tau_n)$ of \eqref{RD} satisfying 
		\be\label{compareUu}
		\max_{1\le i\le n}\left\{g(x\cdot e_i -c_f t+\tau_i)\right\}<U_{e_i,\tau_i}(t,x)<1, \hbox{ for all $(t,x)\in\R\times\R^N$},
		\ee
		and
		\be\label{AsyES}
		\left|U_{e_i,\tau_i}(t,x)-\max_{1\le i\le n}\{g(x\cdot e_i -c_f t+\tau_i)\}\right|\rightarrow 0,
		\ee
		uniformly as $d((t,x),\mathcal{R})\rightarrow +\infty$.  
  
  Moreover, $U(t,x;e_1,\cdots,e_n,\tau_1,\cdots,\tau_n)$ satisfies the following two properties.
  \begin{enumerate}
      \item For any $(t_0,x_0)\in\R\times\R^N$ such that $\min_{i=\{1,\cdots,n\}}\{x_0\cdot e_i -c_f t_0\}\ge 0$,
      \[U_{e_i,\tau_i}(t-t_0,x-x_0)\ge U_{e_i,\tau_i}(t,x),\]
where the strict inequality holds if $\min_{i=\{1,\cdots,n\}}\{x_0\cdot e_i -c_f t_0\}> 0$.
      \item For fixed $(e_1,\cdots,e_n)\in\mathbb{S}^{N-1}\times\cdots\times\mathbb{S}^{N-1}$,  $U(t,x;e_1,\cdots,e_n,\tau_1,\cdots,\tau_n)$ are decreasing in $\tau_i\in\R$ for every $i\in\{1,\cdots,n\}$ and depend continuously on $(\tau_1,\cdots,\tau_n)\in\R^n$ in the sense of $\mathcal{T}$.
  \end{enumerate}
	\end{theorem}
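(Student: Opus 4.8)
The plan is to construct $U_{e_i,\tau_i}$ by the classical sub- and supersolution method, with the maximum of planar fronts as the natural subsolution, and then extract the solution as a monotone limit of the Cauchy problem; uniqueness then follows from a sliding argument that exploits the asymptotic behavior \eqref{AsyES} and the monotonicity \eqref{DMonotone}. The guiding principle is that this generalizes the pyramidal-front construction of \cite{KT,T1}, where $c_f$ plays the role of the "critical" slope, but here with the hyperplanes $P_i$ allowed arbitrary (distinct) normals $e_i$ with $e_i\cdot e_0>0$, so that the polytope $\mathcal P$ is genuinely asymmetric and may lose facets as $t\to+\infty$.

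\medskip

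\noindent\textbf{Step 1 (Subsolution and supersolution).} Set $\underline{u}(t,x)=\max_{1\le i\le n} g(x\cdot e_i-c_f t+\tau_i)$. Because each $g(x\cdot e_i-c_ft+\tau_i)$ solves \eqref{RD} and $f\in C^1$ with $f'(0),f'(1)<0$, the maximum of finitely many solutions is a generalized subsolution (the standard computation: away from the coincidence set it solves the equation; across it the max of subsolutions is a subsolution in the viscosity/distributional sense). For the supersolution one takes $\overline{u}(t,x)=\min\{1,\ \underline{u}(t,x)+\sum_i \delta\, e^{-\beta(x\cdot e_i-c_ft+\tau_i)_{+}}\}$-type perturbations, or more simply uses the constant $1$ together with a comparison argument; the cleaner route, following \cite{KT,T1}, is to first solve the Cauchy problem with initial datum $\underline u(0,\cdot)$ and use parabolic comparison plus monotonicity in $t$ (which holds because $\underline u(t+s,\cdot)\le \underline u(t,\cdot)$ for $s\ge 0$ on the relevant region, using $c_f>0$) to get a monotone-in-$t$ family whose limit $U$ exists, lies strictly between $\underline u$ and $1$ by the strong maximum principle, and solves \eqref{RD}.

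\medskip

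\noindent\textbf{Step 2 (Asymptotics \eqref{AsyES}).} Fix a facet, say $\widetilde P_1$, and a point $(t,x)$ with $d((t,x),\mathcal R)$ large but, say, $x\cdot e_1-c_ft+\tau_1$ bounded while $x\cdot e_j-c_ft+\tau_j\to+\infty$ for $j\ne1$. On such a region the subsolution is essentially $g(x\cdot e_1-c_ft+\tau_1)$, and one builds a local supersolution of the form $g(x\cdot e_1-c_ft+\tau_1)+\varepsilon$ minus exponentially small corrections coming from the other planes, invoking the asymptotic one-dimensional symmetry of the pyramidal/V-shaped fronts \eqref{VF}, \eqref{PyF} as the model estimate; squeezing between this and $\underline u$ forces $U\to g$ there. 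The regions near a single ridge $\mathcal R_{ij}$ but far from all others are handled by comparison with the two-plane solution (which is, after rotation, exactly Hamel's three-front solution or a V-shaped front), and the general far-from-$\mathcal R$ estimate is then a finite covering argument.

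\medskip

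\noindent\textbf{Step 3 (Uniqueness and the two extra properties).} Given any solution $\widetilde U$ trapped as in \eqref{compareUu}–\eqref{AsyES}, the sliding method applies: for $(t_0,x_0)$ with $\min_i\{x_0\cdot e_i-c_ft_0\}\ge0$ one has $\underline u(t-t_0,x-x_0)\ge\underline u(t,x)$, so the shifted solution starts above $\underline u$; \eqref{AsyES} controls both solutions near infinity, the maximum principle on the moving parabolic domain gives the ordering $\widetilde U(t-t_0,x-x_0)\ge \widetilde U(t,x)$, and taking $(t_0,x_0)\to 0$ recovers \eqref{DMonotone} as well as property (1), with strictness from the strong maximum principle when $\min_i\{x_0\cdot e_i-c_ft_0\}>0$. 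Comparing $U$ with $\widetilde U$ both ways (after an arbitrarily small shift, killed in the limit using the asymptotics) yields $U=\widetilde U$. Monotonicity and continuity in $\tau_i$ (property (2)) follow because increasing $\tau_i$ decreases $\underline u$ pointwise, hence decreases the Cauchy data, hence decreases $U$ by comparison; continuity in $\mathcal T$ comes from stability of the monotone-limit construction together with interior parabolic estimates.

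\medskip

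\noindent\textbf{Main obstacle.} The delicate point is the uniform asymptotic estimate \eqref{AsyES} in the transition zones where two or more of the quantities $x\cdot e_i-c_ft+\tau_i$ are simultaneously moderate — i.e., near a ridge but where more than two facets compete, and especially in the regime $t\gg1$ where some facets $\widetilde P_{i,t}$ have disappeared. One must show the "extra" planar fronts contribute only exponentially small corrections there, uniformly, which requires carefully chosen supersolutions adapted to the polytope geometry; this is exactly where the asymptotic one-dimensional symmetry of \cite{HMR,KT,T1} and of Hamel's solution \cite{H} must be leveraged quantitatively rather than qualitatively, and where the lack of any symmetry of $\mathcal P$ prevents a direct quotation of the pyramidal-front results.
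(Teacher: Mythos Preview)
Your overall architecture --- $\underline{u}=\max_i g(x\cdot e_i-c_ft+\tau_i)$ as subsolution, Cauchy problem from $\underline{u}(-n,\cdot)$ and monotone limit to build $U$, sliding for uniqueness and properties (1)--(2) --- matches the paper's Section~3 exactly, and your Step~3 is essentially the paper's Propositions~\ref{Proposition1}, \ref{uniqueness}, \ref{pro continuity}.

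The real gap is your supersolution. Your candidate $\min\{1,\underline u+\sum_i\delta e^{-\beta(\cdot)_+}\}$ cannot be checked as a classical supersolution because $\underline u$ is not $C^2$ on the coincidence set of the maximum, and that set is precisely where you need control. The fallback ``local supersolutions plus finite covering'' in Step~2 is where you correctly locate the difficulty but do not resolve it: patching local barriers across overlapping regions near ridges, and doing so \emph{uniformly} in $(t,x)$ (in particular for $t\gg 1$ when facets disappear), is not a routine covering argument.

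The paper's solution is a single \emph{global} supersolution built from a smooth convex surface that mollifies $\partial\mathcal{P}$. Concretely (Section~\ref{subsection:phi}), after rotating so $e_0=(0,\dots,0,1)$ and writing $q_i(t,x,y)=x\cdot\nu_i\cos\theta_i+y\sin\theta_i-c_ft+\alpha\tau_i$, one defines $y=\varphi(t,x)$ implicitly by $\sum_{i=1}^n e^{-q_i(t,x,y)}=1$. This surface lies inside $\mathcal{Q}$, is exponentially close to $\partial\mathcal{Q}$ away from ridges, and its curvature is controlled by $h=\sum_{i\neq j}e^{-(\hat q_i+\hat q_j)}$ (Lemmas~3.1--\ref{pro-phi}). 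The supersolution is then $\overline u=\min\{g(\overline\xi)+\varepsilon h(\alpha t,\alpha x),1\}$, where $\overline\xi$ is the scaled signed distance to the graph of $\varphi$; the point is that the extra normal speed $\partial_t\varphi/\sqrt{1+|\nabla\varphi|^2}-c_f$ is bounded below by $h/C$ (equation~\eqref{xi-0}), which is exactly what beats the error terms in Lemma~\ref{lem supersolution}. The scaling parameter $\alpha$ lets one flatten the surface enough to make the supersolution inequality hold, while Corollary~\ref{coro:rhoe} gives the uniform squeeze $|\overline u-\underline u|\le(n^2+1)\varepsilon$ outside an $\rho(\varepsilon)$-neighborhood of $\mathcal R$ --- this is what delivers \eqref{AsyES} in one stroke, with no patching. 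You should replace your Step~1 supersolution and Step~2 entirely with this construction.
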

	
	\begin{remark}
 The property 1 implies that $e_0\cdot \nabla U_{e_i,\tau_i}(t,x)<0$.
		By the uniqueness of this solution, the above result gives the following examples.
  \begin{itemize}
      \item 
 In dimension $N=2$, if we take $e_1=(\cos\alpha,\sin\alpha)$, $e_2=(-\cos\alpha,\sin\alpha)$ with $\alpha\in (0,\pi/2)$, and $\tau_1=\tau_2=0$, then $U_{e_i,\tau_i}$ is the V-shaped front satisfying \eqref{VF} with $c=c_f/\sin\alpha$. 
  
  \item Still in dimension $N=2$, if we take $e_1=(\cos\alpha,\sin\alpha)$, $e_2=(-\cos\alpha,\sin\alpha)$ with $\alpha\in (0,\pi/2)$, $e_3=(0,1)$ and $\tau_1=\tau_2=\tau_3=0$, then $U_{e_i,\tau_i}$ is the entire solution with varying level sets in \cite{H}.
  
 \item  In dimensions $N\ge 3$, if we take $e_i=(e_i^\prime\cos\alpha,\sin\alpha)$ with $e_i^\prime\in \mathbb{S}^{N-2}$ and $\alpha\in (0,\pi/2)$ and $\tau_i=0$ for $i=1,\cdots,n$, then $U_{e_i,\tau_i}$ is the pyramidal front satisfying \eqref{PyF} where $c=c_f/\sin\alpha$.  
 
 \item In Theorem~\ref{Th1}, if $e_i$ satisfy $e_i\cdot e_0=\text{constant}$ for all $i\in\{1,\cdots,n\}$, then $U_{e_i,\tau_i}$ is a generalized pyramidal front.
   \end{itemize}
	\end{remark}


Our next result concerns the stability of the entire solutions  constructed in Theorem~\ref{Th1}. This is inspired by some known stability results, e.g. the stability of V-shaped fronts (see \cite{NT1,NT2}). In fact, in dimension $N=2$, the stability of $U_{e_i,\tau_i}$ is equivalent to that of V-shaped fronts. This is because although $U_{e_i,\tau_i}$ is constructed by mixing more than two planar fronts, it can be viewed as a V-shaped front with perturbations decaying as $|x|\rightarrow +\infty$ at any time slice. For example, if we take $e_1=(\cos\alpha,\sin\alpha)$, $e_2=(-\cos\alpha,\sin\alpha)$ with $\alpha\in (0,\pi/2)$, $e_3=(0,1)$ and $\tau_1=\tau_2=\tau_3=0$, the entire solution $U_{e_i,\tau_i}$ satisfies the same asymptotic behaviour as $V$	at any $t_0\in\R$, that is,
	\[\lim_{R\rightarrow +\infty} \sup_{x_1^2+x_2^2>R^2} \left|U_{e_i,\tau_i}(t_0,x)-g(-|x_1|\cos\alpha+x_2\sin\alpha-c_f t_0)\right|=0.\]
Hence one can regard $U_{e_i,\tau_i}(t_0,x)-V(t_0,x)$ as a perturbation for the V-shaped front $V(t,x)$ at $t=t_0$. But for dimensions $N\ge 3$,  the stability of $U_{e_i,\tau_i}(t,x)$ is a more general statement than that of pyramidal fronts. This is because in these dimensions, for pyramidal fronts, one should have $e_i\cdot e_0\equiv \text{constant}>0$ for all $i$ and a certain $e_0\in\mathbb{S}^{N-1}$, but if we  take for example $e_i=(e^\prime_i\cos\theta_i,\sin\theta_i)$ and $\tau_i=0$, $i=1,\cdots,4$, where $e^\prime_1=-e^\prime_2\neq e^\prime_3=-e^\prime_4$, then the entire solution $U_{e_i,\tau_i}(t,x)$  is neither a pyramidal front nor asymptotic to a pyramidal front (because its shapes of level sets are changing in $t$). Finally, as far as we know, the stability of pyramidal fronts in dimensions $N\ge 4$ was unsolved before this work, see \cite{KT}.

\begin{theorem}[Stability]\label{Th2}
Suppose $u$ is a solution to the Cauchy problem for \eqref{RD}  with initial value $u(0,x)=u_0(x)$ satisfying
\[
		\left|u_0(x)-\max_{1\le i\le n}\left\{g(x\cdot e_i +\tau_i)\right\}\right|\rightarrow 0
\]
uniformly as $d(x,\mathcal{R}_{0})\rightarrow +\infty$.
Then
		\[\lim_{t\rightarrow +\infty} \sup_{x\in\R^N} \left|u(t,x)-U_{e_i,\tau_i}(t,x)\right|=0.\]
	\end{theorem}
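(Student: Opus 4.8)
The plan is a squeezing argument based on the comparison principle — in the spirit of Fife--McLeod \cite{FM} and of the stability proofs for V--shaped and pyramidal fronts — followed by a sliding/uniqueness step that removes a residual time--shift, with an induction on the number $n$ of planar fronts to treat the neighbourhoods of the ridges. I first record what is needed about $U:=U_{e_i,\tau_i}$. Property~1 of Theorem~\ref{Th1}, applied with $x_0=0$ and $t_0<0$, gives $U(t+s,\cdot)>U(t,\cdot)$ for every $s>0$; since $\partial_tU$ solves the linearized equation, the strong maximum principle yields $\partial_tU>0$ on $\R\times\R^N$. Using the asymptotic one--dimensional symmetry \eqref{AsyES}, the bound $\inf\{-c_fg'(s):g(s)\in[\eta,1-\eta]\}>0$ and interior parabolic estimates, one upgrades this to: for each $\eta\in(0,1/2)$ there is $\rho(\eta)>0$ with $\partial_tU\ge\rho(\eta)$ on $\{\eta\le U\le1-\eta\}$. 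Extending $f$ to a $C^1$ function on $[-1,2]$ with $f'<0$ near $0$ and $1$, and fixing $\kappa>0$, $\eta_0\in(0,1/2)$ with $f'\le-\kappa$ on $[-\eta_0,\eta_0]\cup[1-\eta_0,1+\eta_0]$, one checks, by a direct computation using $\partial_tU\ge\rho(\eta_0)$ to absorb the bad sign of $f'$ on $\{\eta_0\le U\le1-\eta_0\}$ and $f'\le-\kappa$ elsewhere, that for suitable constants $\beta\in(0,\kappa]$, $\sigma>0$, $\delta_0>0$ the functions
\[
W^{\pm}_{\delta,\xi}(t,x):=U\Big(t+\xi\pm\tfrac{\sigma\delta}{\beta}(1-e^{-\beta t}),\,x\Big)\pm\delta e^{-\beta t},\qquad 0<\delta\le\delta_0,\ \xi\in\R,
\]
are respectively a super- and a subsolution of \eqref{RD} on $(0,\infty)\times\R^N$, the internal time--shift staying bounded.

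Next I would show that $u$ inherits, uniformly in time, the asymptotic profile of $U$: combining the hypothesis on $u_0$ with the stability of the one--dimensional bistable front near each facet and the stability of the constants $0$ and $1$, one gets $\sup_{t\ge0}\sup_{d(x,\mathcal{R}_t)\ge\gamma}|u(t,x)-\max_i g(x\cdot e_i-c_ft+\tau_i)|\to0$ as $\gamma\to+\infty$. Fix $t_1>0$ and a small $\delta\le\delta_0$ and set $\epsilon:=\delta e^{-\beta t_1}$; pick $R$ with $|u(t_1,x)-\max_i g(x\cdot e_i-c_ft_1+\tau_i)|\le\epsilon$ for $d(x,\mathcal{R}_{t_1})\ge R$. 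Far from $\mathcal{R}_{t_1}$ one compares $u(t_1,\cdot)$ with $U(t_1\pm\xi,\cdot)$ using $\max_i g(a_i)=g(\min_i a_i)$ and the monotonicity of $g$; near $\mathcal{R}_{t_1}$ one uses $0\le u\le1$, $U>\max_i g$, and the fact (a consequence of \eqref{AsyES}) that on $\{d(\cdot,\mathcal{R}_{t_1})\le R\}$ one has $U(t_1+s,\cdot)\to1$ and $U(t_1-s,\cdot)\to0$ uniformly as $s\to+\infty$, because such points lie, at later resp. earlier times, deep outside resp. deep inside the moving polytope. This yields $\sigma_\pm\ge0$ with $W^{-}_{\delta,-\sigma_-}(t_1,\cdot)\le u(t_1,\cdot)\le W^{+}_{\delta,\sigma_+}(t_1,\cdot)$, and the comparison principle gives, for $t\ge t_1$,
\[
U\Big(t-\sigma_--\tfrac{\sigma\delta}{\beta}(1-e^{-\beta t}),x\Big)-\delta e^{-\beta t}\ \le\ u(t,x)\ \le\ U\Big(t+\sigma_++\tfrac{\sigma\delta}{\beta}(1-e^{-\beta t}),x\Big)+\delta e^{-\beta t}.
\]
Thus $u$ is, up to an additive $o(1)$ as $t\to\infty$, trapped between two bounded time--translates of $U$; with equicontinuity this also shows that, along any $t_k\to+\infty$ and any $(x_k)$, the functions $u(\cdot+t_k,\cdot+x_k)$ subconverge in the sense of $\mathcal{T}$ to an entire solution of \eqref{RD}.

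It remains to remove the residual shifts $\sigma_\pm$, i.e. to prove $\sup_x|u(t,x)-U(t,x)|\to0$; I would argue by induction on $n$, the case $n=1$, and $n=2$ via the known stability of the V--shaped front \cite{NT1,NT2}, serving as base. For the inductive step, suppose there are $\epsilon_0>0$ and $(t_k,x_k)$ with $t_k\to+\infty$ and $|u(t_k,x_k)-U(t_k,x_k)|\ge\epsilon_0$. If $d(x_k,\mathcal{R}_{t_k})\to+\infty$, then by \eqref{AsyES} and the propagated asymptotics $u(t_k,x_k)$ and $U(t_k,x_k)$ both approach $\max_i g(x_k\cdot e_i-c_ft_k+\tau_i)$, a contradiction. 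Otherwise $d(x_k,\mathcal{R}_{t_k})$ stays bounded; since $\mathcal{R}_{t_k}$ recedes to infinity, after passing to a subsequence $x_k$ lies near one fixed portion of the ridge set along which only $m<n$ of the planar fronts are non--negligible, so the limits of $u(\cdot+t_k,\cdot+x_k)$ and of $U(\cdot+t_k,\cdot+x_k)$ are entire solutions inheriting the asymptotic profile of that $m$--front configuration (the remaining fronts absent); by the induction hypothesis, used via the uniqueness in Theorem~\ref{Th1}, they coincide, contradicting $|u(t_k,x_k)-U(t_k,x_k)|\ge\epsilon_0$. The remaining case, where $x_k$ stays near the single point at which all $n$ facets meet, is handled analogously through compactness and the uniqueness of Theorem~\ref{Th1} for the full configuration. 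Combining with the uniform closeness of $u$ and $U$ far from the ridges gives the claim.

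The main obstacle is the last step. The Fife--McLeod squeezing only controls $u$ up to a bounded time--translate of $U$, and sharpening the additive error in fact forces that translate to grow; the discrepancy is genuinely concentrated near the (unbounded) ridge set, where $u$ and $U$ differ by order one at intermediate times. Eliminating it requires exploiting that near any portion of the ridge set the dynamics is governed by strictly fewer planar fronts — so that one may descend in $n$ and invoke the uniqueness of Theorem~\ref{Th1} — and then patching these local statements, uniformly, with the far--field estimate; the behaviour near where all facets meet, and the verification that the limiting entire solutions really satisfy the hypotheses of that uniqueness theorem, are the delicate points. A secondary technical ingredient is the uniform lower bound $\partial_tU\ge\rho(\eta)$ on $\{\eta\le U\le1-\eta\}$, which is precisely what makes a shape--changing front compatible with the Fife--McLeod sub/supersolution mechanism.
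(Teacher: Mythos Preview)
Your overall plan---squeeze $u$ between Fife--McLeod perturbations of a reference profile, then pass to a limit along a sequence $(t_k,x_k)$ with $t_k\to+\infty$ and invoke the uniqueness part of Theorem~\ref{Th1}---is exactly the paper's. The barriers $W^\pm$ you write down are the content of Lemma~\ref{supsubsolution}, and the bound $\partial_tU\ge\rho(\eta)$ on $\{\eta\le U\le1-\eta\}$ is Lemma~\ref{Ut}. The genuine gap, however, is not where you locate it. The compactness step works only if one first has the \emph{sharp} uniform-in-time asymptotic
\[
\sup_{t\ge0}\ \sup_{d(x,\mathcal{R}_t)\ge\gamma}\Big|u(t,x)-\max_{1\le i\le n} g(x\cdot e_i-c_ft+\tau_i)\Big|\ \longrightarrow\ 0\qquad(\gamma\to+\infty),
\]
with the \emph{original} shifts $\tau_i$. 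You assert this in one line via ``stability of the one-dimensional front near each facet'', but that does not apply: near $\mathcal{R}_0$ the datum $u_0$ is uncontrolled, so a planar subsolution $g(x\cdot e_i+\tau_i+C)$ need not sit below $u_0$ there. Your own barriers $W^\pm$, built from $U$, do not give it either: forcing $W^-_{\delta,-\sigma_-}(0,\cdot)\le u_0$ on the (for $N\ge3$, unbounded) ridge neighbourhood requires $\sigma_-$ large, and the conclusion is only $u\ge U(t-\sigma_-,\cdot)-o(1)$, i.e.\ the asymptotic with $\tau_i+c_f\sigma_-$ in place of $\tau_i$. Passing to the limit then only yields that $u_\infty$ is trapped between two fixed time-translates of $U_I$, hence of the form $U(\,\cdot\,;(e_i,\tau_i')_{i\in I})$ for \emph{some} $\tau_i'$---not necessarily the correct ones---and the contradiction with $\lim U(\cdot+t_k,\cdot+x_k)=U_I$ does not follow. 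The induction on $n$ does not help, since at that point you need uniqueness, and uniqueness requires identifying the shifts exactly.

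The paper resolves this by abandoning $U$ as the base profile for the lower barrier and constructing instead, for each facet $i$, an explicit subsolution $u_{i,\ast}$ modeled on a smooth surface asymptotic to a \emph{reflected} polytope $\mathcal{Q}_i$ that shares the facet $\widetilde{Q}_i$ with $\mathcal{Q}$ but opens in the opposite $y$-direction (Section~4.2). These have two features your $W^-$ lacks: by taking the scaling parameter $\alpha$ small one gets $u_{i,\ast}(0,\cdot)\le\delta$ on the entire ridge neighbourhood (Lemma~\ref{psi-alphat0}), so $u_{i,\ast}(0,\cdot)-\delta\le u_0$ globally \emph{without any time-shift}; and along facet $i$, away from ridges, $u_{i,\ast}$ matches $g(x\cdot e_i-c_ft+\tau_i)$ to within $O(\varepsilon)$ (Lemma~\ref{asy-ui}). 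Running Lemma~\ref{supsubsolution} with $u_{i,\ast}$ (and the supersolution $\overline{u}$ of Section~3) then gives the sharp asymptotic above, after which the compactness/uniqueness argument closes directly---no induction needed, since $u_\infty$ now verifies \eqref{asy-v} for the sub-configuration $I$ and Proposition~\ref{uniqueness} identifies it with $U_I$.
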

	
	
	Although we have established the existence of many entire solutions satisfying \eqref{DMonotone} in Theorem~\ref{Th1},  there are more entire solutions satisfying \eqref{DMonotone}  than what we constructed. The conical front in $N\ge 3$ is such an example. 
 Then a natural problem is whether entire solutions satisfying \eqref{DMonotone} can be classified, which is however far beyond the scope of this paper. Instead we will restrict our attention only to the characterization of  transition fronts whose level sets are asymptotic to finitely many hyperplanes at $t=-\infty$. We show that they are exactly the entire solutions constructed in Theorem~\ref{Th1}.
	
For this purpose, we need to recall some notations and the definition of transition front, which were introduced by Berestycki and Hamel in \cite{BH1,BH2}. For any two subsets $A$ and $B$ of $\mathbb{R}^N$ and for $x\in\R^N$, we set
	\begin{equation*}
		d(A,B)=\inf\left\{|x-y|;\ (x,y)\in A\times B\right\}
	\end{equation*}
	and $d(x,A)=d(\{x\},A)$, where $|\cdot|$ is the Euclidean norm in $\mathbb{R}^N$. We will also use $d_H$ to denote the Hausdorff distance  between two sets.
 
 Consider two families $(\Omega_t^-)_{t\in \mathbb{R}}$ and $(\Omega_t^+)_{t\in \mathbb{R}}$ of open nonempty subsets of $\mathbb{R}^N$ satisfying
	\begin{eqnarray}\label{eq1.3}
		\forall t\in \mathbb{R},\ \ \left\{
		\begin{aligned}
			&\Omega_t^-\cap \Omega_t^+=\emptyset,\\
			&\partial \Omega_t^-=\partial \Omega_t^+=:\Gamma_t,\\
			&\Omega_t^-\cup \Gamma_t \cup \Omega_t^+=\mathbb{R}^N,\\
			&\sup\{d(x,\Gamma_t);\ \ x\in \Omega_t^+\}=\sup\left\{d(x,\Gamma_t);\ \ x\in \Omega_t^-\right\}=+\infty
		\end{aligned}
		\right.
	\end{eqnarray}
	and
	\begin{eqnarray}\label{eq1.4}
		\left\{
		\begin{aligned}
			&\inf\left\{\sup\left\{d(y,\Gamma_t);\ y\in \Omega_t^+,\ |y-x|\leq r\right\};\ \ t\in \mathbb{R},\ \ x\in \Gamma_t\right\}\rightarrow +\infty\\
			&\inf\left\{\sup\left\{d(y,\Gamma_t);\ y\in \Omega_t^-,\ |y-x|\leq r\right\};\ \ t\in \mathbb{R},\ \ x\in \Gamma_t\right\}\rightarrow +\infty
		\end{aligned}
		\right.
		\text{ as}\ \ r\rightarrow +\infty.
	\end{eqnarray}
	Notice that the condition~\eqref{eq1.3} implies   that for every $t\in \mathbb{R}$, the interface $\Gamma_t$ is not empty  and $\Omega_t^{\pm}$ can not be bounded or contained in a cylinder neither. The sets $\Gamma_t$ are assumed to be made of a finite number of graphs, that is, there is an integer $n\geq 1$ such that, for each $t\in \mathbb{R}$, there are $n$ open subsets $\omega_{i,t}\subset \mathbb{R}^{N-1}$(for $1\leq i\leq n$), $n$ continuous maps $\psi_i: \omega_{i,t}\rightarrow \mathbb{R}$ and $n$ rotations $R_{i,t}$ of $\mathbb{R}^N$, such that
	\begin{equation}\label{eq1.6}
		\Gamma_t \subset \bigcup_{1\leq i\leq n} R_{i,t}\left(\{x\in \mathbb{R}^N; \ \ x^\prime\in \omega_{i,t},\ \ x_N=\psi_i(x^\prime)\}\right).
	\end{equation}
 Further properties of $\Gamma_t$ can be found in \cite{GH,H}.
	
	\begin{definition}\label{def1.1}{\rm{\cite{BH1,BH2}}}
		For~\eqref{RD}, a transition front connecting $0$ and $1$ is a classical solution $u:\mathbb{R}\times\mathbb{R}^N \rightarrow (0,1)$ for which there exist some sets $(\Omega_t^{\pm})_{t\in \mathbb{R}}$ and $(\Gamma_t)_{t\in \mathbb{R}}$ satisfying~\eqref{eq1.3},~\eqref{eq1.4},~\eqref{eq1.6} and for every $\varepsilon>0$, there exists an $M_{\varepsilon}>0$ such that
		\begin{eqnarray}\label{eq1.7}
			\left\{\begin{aligned}
				&\forall t\in \mathbb{R},\ \ \forall x\in \Omega_t^+, \ \ \left(d(x,\Gamma_t)\geq M_{\varepsilon}\right)\Rightarrow \left(u(t,x)\geq 1-\varepsilon\right)\!,\\
				&\forall t\in \mathbb{R},\ \ \forall x\in \Omega_t^-, \ \ \left(d(x,\Gamma_t)\geq M_{\varepsilon}\right)\Rightarrow \left(u(t,x)\leq \varepsilon\right)\!.
			\end{aligned}
			\right.
		\end{eqnarray}
		Furthermore, $u$ is said to have a global mean speed $\gamma$ $(\geq 0)$ if
		\begin{equation*}
			\frac{d(\Gamma_t,\Gamma_s)}{|t-s|}\rightarrow \gamma \ \ \text{as}\ \ |t-s|\rightarrow +\infty.
		\end{equation*}
	\end{definition}
	
	\begin{remark}\label{rmk 2.5}
	By Definition~\ref{def1.1}, one can replace $\Gamma_t$ by $\widetilde{\Gamma}_t$ if $d_{H}(\Gamma_t,\widetilde{\Gamma}_t)<+\infty$ (and replace $\Omega_t^{\pm}$ by $\widetilde{\Omega}^{\pm}_t$, respectively, where $\widetilde{\Omega}^{\pm}_t$ are the two disjoint domains separated by $\widetilde{\Gamma}_t$). 
On the other hand, one can easily check that $U_{e_i,\tau_i}$ in Theorem~\ref{Th1} is a transition front connecting $0$ and $1$ with sets
\be\label{definition of Gamma}
\Gamma_t:=\left\{x\in\R^N;\ \min_{1\le i\le n} \{x\cdot e_i -c_f t+\tau_i\}=0\right\},
\ee
		and
		\be\label{definition of Omega}
  \Omega_t^{\pm}:=\left\{x\in\R^N; \min_{1\le i\le n} \{x\cdot e_i -c_f t+\tau_i\}\lessgtr 0\right\}.
  \ee
	\end{remark}

	Now we present our last main result.
	
	\begin{theorem}\label{Th3}
		Assume that $u$ is a transition front connecting $0$ and $1$ such that for every $t\ll -1$, there are $n$ vectors $e_i$ (independent of $t$) and constants $\xi_t^i$ ($i=1,2,\cdots,n$) such that the interfaces $\Gamma_t$ are given by 
		\be\label{interface'}
		\Gamma_t=\left\{x\in\R^N;\ \min_{1\le i\le n}\{x\cdot e_i +\xi_t^i\}=0\right\}.
		\ee
		Then there exist $m\in \{1,\cdots,n\}$ vectors $e_i$ and constants $\tau_i$ ($i=1,\cdots,m$) such that $e_i\cdot e_0>0$ for a certain $e_0\in \mathbb{S}^{N-1}$ and
		\[u(t,x)\equiv U(t,x;e_1,\cdots,e_m,\tau_1,\cdots,\tau_m), \hbox{ for $t\in\R$ and $x\in\R^N$}.\]
	\end{theorem}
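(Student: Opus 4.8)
\bigskip
\noindent\textbf{Proof proposal.}

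The plan is to read a candidate solution off the geometry of $u$ near $t=-\infty$, to prove that $u$ converges to it as $t\to-\infty$, and then to upgrade convergence to equality by a Fife--McLeod type comparison; the whole argument runs by induction on the number of facets.

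\noindent\emph{Reduction and identification of the candidate.} First I would discard redundant directions: if $e_i=e_j$, keep the one with the larger $\xi^i_t$, so that after relabelling $e_1,\dots,e_m$ are pairwise distinct and \eqref{interface'} still holds with $m$ in place of $n$ for $t\ll-1$. Since $\Gamma_t=\partial\mathcal P_t$ with $\mathcal P_t=\bigcap_{i\le m}\{x\cdot e_i+\xi^i_t\ge0\}$ convex, and the two components of $\R^N\setminus\Gamma_t$ are $\mathrm{int}\,\mathcal P_t$ and its exterior, conditions \eqref{eq1.3}--\eqref{eq1.4} force $\mathcal P_t$ to be unbounded with recession cone of nonempty interior, whence there is $e_0\in\mathbb S^{N-1}$ with $e_0\cdot e_i>0$ for all $i$. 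A facet blow-up (see below) also shows that $\Omega^+_t$, the side where $u\to1$, must be the exterior of $\mathcal P_t$; otherwise the blow-up limit would be a planar front travelling in the direction $-e_i$, incompatible with $\Gamma_t$ being the polytopal surface in the fixed directions $e_i$ together with \eqref{signf}. Next, for each $i$ take $t_k\to-\infty$ and $x_k\in\widetilde P_{i,t_k}$ with $d(x_k,\mathcal R_{t_k})\to+\infty$ (all $m$ facets are present for $t\ll-1$), recenter $(t_k,x_k)$ at the origin and pass to the limit using interior parabolic estimates. As $\Gamma_{t_k}$ coincides with $\{x\cdot e_i+\xi^i_{t_k}=0\}$ near $x_k$, the limit is an entire solution of \eqref{RD} in $[0,1]$ connecting, by \eqref{eq1.7}, $1$ and $0$ uniformly as $x\cdot e_i\to\mp\infty$; hence it is a transition front with a hyperplane interface and therefore equals $g(x\cdot e_i-c_ft+\tau_i)$ for some constant $\tau_i$, by the one-dimensional rigidity of bistable fronts (\cite{FM}, together with the uniqueness of planar transition fronts). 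Matching the $1/2$-level with $\Gamma_t$ near facet $i$ gives $\xi^i_t+c_ft\to\tau_i$ as $t\to-\infty$. I then set $U^*:=U(\,\cdot,\cdot\,;e_1,\dots,e_m,\tau_1,\dots,\tau_m)$, which exists by Theorem~\ref{Th1} (the $e_i$ being distinct with $e_i\cdot e_0>0$). If $m=1$ this same rigidity already gives $u=g(\pm x\cdot e_1-c_ft+\tau)=U(\,\cdot\,;\pm e_1,\tau)$, which is the base of the induction, so assume henceforth $m\ge2$.

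\noindent\emph{Convergence as $t\to-\infty$.} The goal is $\sup_{x\in\R^N}|u(t,x)-U^*(t,x)|\to0$ as $t\to-\infty$. Where $d(x,\Gamma_t)\ge M_\epsilon$, both $u$ and $U^*$ are (being transition fronts with the same $\Gamma_t,\Omega^\pm_t$, see Remark~\ref{rmk 2.5}) within $\epsilon$ of the same limit $0$ or $1$, so the bound is trivial. On a tube of fixed radius about the part of $\Gamma_t$ lying deep inside a single facet the facet blow-up gives $u(t,x)-g(x\cdot e_i-c_ft+\tau_i)\to0$ uniformly, and $U^*$ obeys the same by \eqref{AsyES}. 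It remains to handle tubes about the ridges. Here I would blow $u$ up near a point lying deep in a $k$-fold intersection of facets ($2\le k\le m$, at distance $\to+\infty$ from all higher-order intersections) as $t\to-\infty$; the limit is a transition front whose interface is again of the form \eqref{interface'} (a $k$-faceted cone, independent of the $N-k$ directions orthogonal to the $e_i$ involved), hence, by the inductive hypothesis on the number of facets together with the uniqueness of V-shaped and (generalized) pyramidal fronts in the low-complexity base cases (\cite{NT1,NT2,HMR,T1,KT}), it equals the corresponding solution of Theorem~\ref{Th1}, which is precisely the blow-up of $U^*$ at that ridge. Covering the tube around $\Gamma_t$ by the finitely many neighbourhoods of its faces of each dimension and using the self-similarity of those faces along themselves, the estimates are uniform in $t$, which gives the claim.

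\noindent\emph{From convergence to equality.} Because $f'(0),f'(1)<0$, and $\partial_tU^*$ is bounded below by a positive constant on $\{(t,x):U^*(t,x)\in[\theta_1,\theta_2]\}$ for fixed $0<\theta_1<\theta_2<1$ (which follows from the strict monotonicity in property~1 of Theorem~\ref{Th1} near $\Gamma_t$ and from the planar and V-shaped asymptotics of $U^*$ far out), the classical Fife--McLeod type functions
\[
U^*\!\bigl(t\mp C\epsilon\bigl(1-e^{-\omega(t+T)}\bigr),\,x\bigr)\ \pm\ \epsilon\,e^{-\omega(t+T)}
\]
are, for fixed suitable $C,\omega>0$, a sub- and a supersolution of \eqref{RD} on $\{t>-T\}$. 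Given $\epsilon>0$, the previous step yields $T=T_\epsilon\to+\infty$ with $\|u(-T)-U^*(-T)\|_{L^\infty(\R^N)}\le\epsilon$, which is exactly the ordering needed at $t=-T$; by the comparison principle the sandwich then holds for all $t\ge-T$. Fixing $(t_0,x)$ and letting $\epsilon\to0$ (so eventually $-T_\epsilon<t_0$), both bounds converge to $U^*(t_0,x)$ by continuity of $U^*$ in $t$, whence $u(t_0,x)=U^*(t_0,x)$; thus $u\equiv U^*$.

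\noindent\emph{Main obstacle.} The core of the proof is the convergence step, and within it the description of $u$ near the ridges of $\Gamma_t$: this forces a classification of the blow-up limits there as V-shaped and (generalized) pyramidal fronts, which is why the whole argument is organized as an induction on the number of facets and leans on the uniqueness theory for those fronts --- complete in all dimensions only with the help of Theorems~\ref{Th1}--\ref{Th2}. By contrast, the reduction, the facet analysis, and the final Fife--McLeod squeezing are comparatively routine.
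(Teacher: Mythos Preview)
Your overall strategy --- identify the candidate $U^*$, prove $\sup_x|u(t,x)-U^*(t,x)|\to0$ as $t\to-\infty$, then squeeze with Fife--McLeod barriers --- is a natural one and genuinely different from the paper's, which never proves convergence at $t=-\infty$ but instead works entirely by sliding: it first establishes the cone--monotonicity $u(t-t_0,x-x_0)\ge u(t,x)$ for $\min_i\{x_0\cdot e_i-c_ft_0\}\ge0$ (Lemma~\ref{t0x0}), then defines $\tau_i^*$ as the \emph{optimal} shift in $g(x\cdot e_i-c_ft+\tau)\le u$ and $\eta_i^*$ as the optimal shift in $U(\cdot;\eta_1,\dots)\ge u$, and finally uses the monotonicity to pin $\eta_i^*=\tau_i^*$.

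The gap in your route is in the ``identification'' and ``convergence'' steps: you have not shown that the facet blow--up limit is \emph{unique}. A limit along one sequence $(t_k,x_k)$ deep in facet $i$ is a planar front $g(x\cdot e_i-c_ft+\tau)$, but nothing you wrote prevents another sequence from producing a different shift $\tau'$; your sentence ``matching the $1/2$--level with $\Gamma_t$ gives $\xi_t^i+c_ft\to\tau_i$'' is exactly the missing step, and it does not follow from the hypotheses, because $\Gamma_t$ is only pinned down up to bounded Hausdorff distance (Remark~\ref{rmk 2.5}) and is \emph{not} the $1/2$--level set of $u$. Indeed the paper only proves $\sup_{t\le T}|\xi_t^i+c_ft|<\infty$ (Step~2 of Lemma~5.4), not convergence. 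Without a single well-defined $\tau_i$, there is no candidate $U^*$, the ridge blow--ups cannot be matched to ``the'' blow--up of $U^*$, and the induction does not close: at a $k$--fold ridge the inductive hypothesis gives you \emph{some} $U(\cdot;e_{i_1},\dots,e_{i_k},\tau'_{i_1},\dots,\tau'_{i_k})$, but you still have to identify $\tau'_{i_j}$ with the (not yet unique) facet shifts. What rescues this in the paper is precisely the cone--monotonicity of $u$ (Lemma~\ref{t0x0}), proved by sliding; once you have it, the facet shift is forced to be single-valued and your scheme can be completed --- but that monotonicity lemma is the real engine, and it is absent from your proposal.

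A secondary point: your argument that $\Omega_t^+$ is the exterior of $\mathcal P_t$ is sketched as a consequence of a facet blow--up; the paper treats this as a separate issue (Lemma~5.3) and invokes the convexity of the blowing--down limit of $\{u<1/2\}$ from~\cite{W}, which is not as soft as you suggest.
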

	
The paper is organized as follows. In Section~3, we construct entire solutions by mixing finite planar fronts and prove their asymptotic behaviour, monotonicity and uniqueness. In Section~4, we prove the stability of the entire solution constructed in Section~3. Section~5 is devoted to the characterization problem of transition fronts whose level sets are asymptotic to finitely many hyperplanes.

{\bf Notations.} The following notations will be used throughout the paper.
\begin{itemize}
    \item 	By \eqref{F}, there exists $\sigma\in (0,1/8)$ such that
		\be\label{sigma}
		f^\prime(u)\le \frac{f^\prime(0)}{2} \hbox{ for $u\in [0,4\sigma]$ and } f^\prime(u)\le \frac{f^\prime(1)}{2} \hbox{ for $u\in [1-4\sigma,1]$}.
		\ee
  \item  $L:=\max_{0\le u\le 1} |f^\prime(u)|$.
\item 		Since $g(-\infty)=1$ and $g(+\infty)=0$, there exists an $R>0$ such that 
\be\label{eq-R}
0<g(\xi)\le \sigma \hbox{ for $\xi\ge R$} \quad \hbox{ and}  \quad 1-\sigma\le g(\xi)<1 \hbox{ for $\xi\le -R$}.
\ee
\item 		Since $g^\prime<0$, there exists $k>0$ such that $-g^\prime\ge k$ in $[-R,R]$.
\item From \cite{FM}, one knows that $|g^\prime|$ and $|g^{\prime\prime}|$ decay exponentially as $|\xi|\rightarrow +\infty$. Thus, there exists $M>0$ such that 
		\be\label{M}
		|g^\prime(\xi)|+|g^\prime(\xi)\xi|+|g^{\prime\prime}(\xi)\xi|+|g^{\prime\prime}(\xi)\xi^2|\le M, \quad \forall  \xi\in \R.
		\ee
  \item $\mu:=\min\left\{1,-\frac{f^\prime(0)}{4},-\frac{f^\prime(1)}{4}\right\}$.

  \end{itemize}

\section{Existence and uniqueness: Proof of Theorem \ref{Th1}}

This section is devoted to the proof of Theorem~\ref{Th1}, that is, we construct an entire solution from the data $\{(e_i,\tau_i)\}_{i=1,\cdots,n}$. Since the equation \eqref{RD} is invariant under rotation of  coordinates, we will assume without loss of generality that $e_0=(0,0,\cdots,1)$, that is, the $x_N$-direction. By denoting $y:=x_N$, \eqref{RD} is rewritten as
\be\label{RDxy}
\partial_tu-\Delta_x u-u_{yy}=f(u),\ t\in\R,\ x\in\R^{N-1},\ y\in\R.
\ee

\subsection{A surface with asymptotic planes}\label{subsection:phi}

In this subsection, we introduce a hypersurface which is asymptotic  to several hyperplanes at infinity. We will use the signed distance function to this hypersurface to construct supersolutions in the next subsection.

Because	$e_i\cdot e_0>0$ for each  $i=1$, $\dots$, $n$, there exist a unit vector $\nu_i\in \mathbb{S}^{N-2}$ and an angle $\theta_i$ of $(0,\pi/2]$  such that $e_i=(\nu_i\cos\theta_i,\sin\theta_i)$.  Recall that $\tau_i$ ($i=1,2,\cdots,n$) are constants. Take a positive constant $\alpha$, which will be used as a  scaling parameter later on. For any  $(t,x,y)\in\R\times\R^N$ and every $i\in\{1,\cdots,n\}$, denote
	\[q_i(t,x,y)=x\cdot \nu_i \cos\theta_i +y \sin\theta_i -c_f t +\alpha\tau_i.\]
Associated to each $q_i$ is the hyperplane in $\R\times\R^N$,
	\[Q_i:=\left\{(t,x,y)\in\R\times\R^N; q_i(t,x,y)=0\right\}.\]
It is a graph in the $y$-direction, $\{y=\psi_i(t,x)\}$, where
	\[\psi_i(t,x)=-x\cdot\nu_i\cot\theta_i+\frac{c_f}{\sin\theta_i}t-\frac{\alpha}{\sin\theta_i}\tau_i.\]
Notice that $q_i$ is the signed distance function to $Q_i$.
Let $\mathcal{Q}$ be the polytope enclosed by $Q_1$, $\dots$, $Q_n$, that is,
	\[\mathcal{Q}:=\left\{(t,x,y)\in\R\times\R^N; \min_{1\le i\le n}q_i(t,x,y)\ge 0\right\}.\]
Let $\partial\mathcal{Q}$ be the boundary of $\mathcal{Q}$. It has the form $\{y=\psi(t,x)\}$, where
	\[\psi(t,x):=\max_{1\le i \le n}\psi_i(t,x).\]
As a consequence, $\partial\mathcal{Q}=\cup\widetilde{Q}_i$, where $\widetilde{Q}_i=\partial\mathcal{Q}\cap Q_i$ are its facets. If we denote
	\[ \Psi(t,x,y):=\min_{1\leq i \leq n}q_i(t,x,y)=\Phi(x,y)-c_ft,\]
then $\partial\mathcal{Q}=\{\Phi=c_ft\}$. That is, for any $t$, $\partial\mathcal{Q}_t$ is the $c_ft$ level set of $\Phi$. Let $G_{ij}=\widetilde{Q}_{i}\cap\widetilde{Q}_j$ be the ridge as the intersection of $\widetilde{Q}_i$ and $\widetilde{Q}_j$ for some $i\neq j$. Let $G$ be the set of all ridges of $\mathcal{Q}$ and $\widehat{G}$ be the projection of $G$ on the $(t,x)$-plane, that is, 
	\[\widehat{G}:=\left\{(t,x)\in\R\times\R^{N-1};\ \hbox{ there exists one $y\in\R$ such that} (t,x,y)\in G\right\}.\]
	Let $\widehat{Q}_i$ be the projection of $\widetilde{Q}_i$ on the $(t,x)$-plane, that is,
	\begin{eqnarray*}\widehat{Q}_i&:=&\left\{(t,x)\in\R\times\R^{N-1};\ \hbox{ there exist $y\in\R$ such that }  (t,x,y)\in\widetilde{Q}_i\right\}\\
		&=&\left\{(t,x)\in\R\times\R^{N-1};\ \psi_i(t,x)=\max_{1\leq j\leq n}\psi_j(t,x)\right\}.
	\end{eqnarray*}
	By the graph property of $\partial\mathcal{Q}$, $\cup_{i=1}^n \partial\widehat{Q}_i=\widehat{G}$ and $\cup_{i=1}^n \widehat{Q}_i=\R\times\R^{N-1}$.

Let $y=\varphi(t,x)$ be the function determined by the relation
	\be\label{surface}
	\sum_{i=1}^{n} e^{-q_i(t,x,y)}=1.
	\ee
	The existence of such a function is guaranteed by the implicit function theorem, which also implies that $\varphi \in C^{\infty}(\R\times\R^{N-1})$. Let  $\Sigma:=\{y=\varphi(t,x)\}$    be the graph of $\varphi$. Hereafter, we use $\widehat{q}_i(t,x)$ to denote $q_i(t,x,\varphi(t,x))$ for short. We also set
	\[h:=\sum_{i, j\in\{1,\cdots,n\},i\neq j } e^{-(\widehat{q}_i+\widehat{q}_j)}.\]
	The following lemma  shows that $h$ is a measurement of flatness for $\Sigma$.
	\begin{lemma}
	The graph $\Sigma$ satisfies the following properties:
		\begin{description}
			\item [(i)] $\Sigma\subset \mathcal{Q}$;
			\item [(ii)]  $\Sigma$ stays at finite distance from $\partial \mathcal{Q}$, or equivalently $\sup_{\R\times\R^N}|\varphi-\psi|<+\infty$;
			\item [(iii)] $\Sigma$ approaches $\partial \mathcal{Q}$ exponentially away from $G$, or equivalently, there exists a constant $C$ such that
			\be\label{expoential close}
			|\varphi(t,x)-\psi(t,x)|\leq C\exp\Big\{-\frac{1}{C}d(x,G)\Big\} \text{ or } C h(t,x),\quad \forall (t,x)\in\R\times\R^N.
			\ee
		\end{description}
	\end{lemma}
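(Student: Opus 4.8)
The plan is to reduce the three assertions to scalar estimates in the single quantity $\delta(t,x):=\varphi(t,x)-\psi(t,x)$ together with the ``gaps'' $\psi_j-\psi_i$ between the affine height functions. The basic observation is that, since each $Q_i$ is the graph $\{y=\psi_i(t,x)\}$ and $\sin\theta_i>0$, one has the identity $q_i(t,x,y)=\sin\theta_i\,(y-\psi_i(t,x))$, hence
\[
\widehat q_i(t,x)=\sin\theta_i\,\bigl(\varphi(t,x)-\psi_i(t,x)\bigr),\qquad i=1,\dots,n .
\]
Set $s:=\min_{1\le i\le n}\sin\theta_i>0$. For \textbf{(i)}, if $\widehat q_j(t,x)\le 0$ for some $j$ then $e^{-\widehat q_j(t,x)}\ge 1$, which is incompatible with $\sum_{i=1}^n e^{-\widehat q_i(t,x)}=1$ and $n\ge 2$ (all summands are strictly positive). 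Thus $\widehat q_i>0$ for every $i$, i.e. $\min_i q_i(t,x,\varphi(t,x))>0$, which is exactly $\Sigma\subset\mathrm{int}\,\mathcal Q$.

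For \textbf{(ii)}, note first that $\widehat q_i>0$ gives $\varphi\ge\psi_i$ for all $i$, hence $\varphi\ge\psi=\max_i\psi_i$ and $\delta\ge 0$. If $j$ realizes $\psi(t,x)=\psi_j(t,x)$, then for every $i$ one has $\varphi-\psi_i=\delta+(\psi_j-\psi_i)\ge\delta$, so $\widehat q_i\ge s\,\delta$. Plugging this into the defining relation yields $1=\sum_i e^{-\widehat q_i}\le n\,e^{-s\delta}$, i.e. $0\le\delta\le s^{-1}\ln n$. This is the desired uniform bound $\sup|\varphi-\psi|<+\infty$.

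\textbf{(iii)} splits into an algebraic bound $\delta\le Ch$ and a geometric bound $h\le Ce^{-d/C}$. For the algebraic part I would use the identity (a consequence of $\sum_i e^{-\widehat q_i}=1$)
\[
h=\sum_{i\neq k}e^{-\widehat q_i}e^{-\widehat q_k}=\sum_{i=1}^n e^{-\widehat q_i}\bigl(1-e^{-\widehat q_i}\bigr),
\]
keep only the $j$-th term (with $j$ as above), and use $0\le\widehat q_j=\sin\theta_j\,\delta\le\delta\le s^{-1}\ln n$ together with the elementary inequality $1-e^{-x}\ge c\,x$ valid on the bounded interval $[0,s^{-1}\ln n]$, to get $h\ge e^{-\widehat q_j}(1-e^{-\widehat q_j})\ge c'\,\delta$ with $c'>0$ depending only on $n$ and $s$; hence $\delta\le Ch$. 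For the geometric part, fix $(t,x)$, pick $j$ with $(t,x)\in\widehat Q_j$ (possible since $\cup_i\widehat Q_i=\R\times\R^{N-1}$), and set $d:=d\bigl((t,x),\widehat G\bigr)$; we may assume $d>0$, so $(t,x)\in\mathrm{int}\,\widehat Q_j$. Because $\partial\widehat Q_j\subset\widehat G$, the open ball $B_d(t,x)$ does not meet $\partial\widehat Q_j$, hence $B_d(t,x)\subset\widehat Q_j$, so each affine function $\psi_j-\psi_i$ ($i\neq j$) is $\ge 0$ on all of $B_d(t,x)$. Since $|\nabla_{(t,x)}(\psi_j-\psi_i)|=:m_{ij}$ is a positive constant (positivity uses $e_i\neq e_j$), an affine function that is nonnegative on $B_d(t,x)$ must be $\ge m_{ij}d$ at the center, so $\psi_j(t,x)-\psi_i(t,x)\ge m_0\,d$ with $m_0:=\min_{i\neq j}m_{ij}>0$. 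Therefore $\widehat q_i\ge s\,(\psi_j-\psi_i)\ge s\,m_0\,d$ for all $i\neq j$, while $\widehat q_j\ge 0$; as every ordered pair $(i,k)$ with $i\neq k$ contains an index different from $j$, each term of $h$ is at most $e^{-s m_0 d}$, whence $h\le n(n-1)\,e^{-s m_0 d}$. Combining with $\delta\le Ch$ gives the stated exponential decay; the $d(x,G)$ in the statement is to be read as the distance in the $(t,x)$ variables to the projected ridge set $\widehat G$, equivalently to $G$ up to a bounded factor.

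The step I expect to be the main obstacle is the geometric estimate in \textbf{(iii)}: one must certify that moving away from the projected ridges forces exactly one $\psi_j$ to dominate all the others by an amount linear in the distance, with a linear rate bounded below uniformly over all points. The ``affine-and-nonnegative-on-a-ball'' argument supplies the linear growth, and finiteness of the index set supplies the uniform rate; everything else is routine bookkeeping with the relation $\sum_i e^{-\widehat q_i}=1$ and with the identity $\widehat q_i=\sin\theta_i(\varphi-\psi_i)$.
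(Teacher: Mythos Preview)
Your proof is correct and follows essentially the same route as the paper: both arguments hinge on the relation $\sum_i e^{-\widehat q_i}=1$ together with the affine structure of the $\psi_i$, and both establish that away from the projected ridges one height function dominates the others by an amount linear in the distance. The differences are organizational: in (ii) you give a direct bound $\delta\le s^{-1}\ln n$ where the paper argues by contradiction, and in (iii) you use the identity $h=\sum_i e^{-\widehat q_i}(1-e^{-\widehat q_i})$ to get $\delta\le Ch$ in one stroke, whereas the paper writes $\varphi-\psi_i=-\tfrac{1}{\sin\theta_i}\ln\bigl(1-\sum_{j\neq i}e^{-\widehat q_j}\bigr)$ and estimates from there (incidentally obtaining the matching lower bound $\varphi-\psi\ge C^{-1}h$ as well).
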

	
	\begin{proof}
		(i) By  \eqref{surface}, for every point $(t,x,y)\in\Sigma$, 
		\[\min_{1\le i\le n} q_i(t,x,y)\geq 0.\]
		Hence $(t,x,y)\in \mathcal{Q}$.

		(ii) If this is not true, because $q_i$ is the signed distance to $Q_i$, there would exist a sequence $\{(t_k,x_k)\}_{k\in\mathbb{N}}$ of $\R\times\R^{N-1}$ such that
		\[\min_{1\le i\le n} q_i(t_k,x_k,\varphi(t_k,x_k))\rightarrow +\infty \hbox{ or } -\infty \hbox{ as } k\rightarrow +\infty.\]
		This means $\sum_{i=1}^n e^{-q_i(t_k,x_k,\varphi(t_k,x_k))}\rightarrow 0$ or $+\infty$ respectively as $k\rightarrow +\infty$, which is a contradiction.
		
		(iii) 
		In this step we fix an $i\in \{1,2,\cdots,n\}$, and assume  $(t,x)\in\widehat{Q}_i$. By (i) and the definition of $\mathcal{Q}$, we have
		\be\label{2.0}
		\varphi(t,x)\ge \psi_i(t,x).
		\ee
		
		We claim that there exists a positive constant $C^\prime$ such that
		\be\label{2.1}
		\min_{j\neq i}q_j(t,x,\varphi(t,x))\ge C^\prime d((t,x),\widehat{G}).
		\ee
		Indeed, by \eqref{2.0}, for any $j\neq i$ we have
		\[q_j(t,x,\varphi(t,x))\ge q_j(t,x,\psi_i(t,x)).\]
		On the other hand, by the definition of $\psi_i$, we have
		\[q_i(t,x,\psi_i(t,x))=0.\]
		Because $e_j\neq e_i$, we find a positive constant $C^\prime$ such that
		\[q_j(t,x,\psi_i(t,x))\geq C^\prime\hbox{dist}\left((t,x), \{\psi_i=\psi_j\}\right),\hbox{ for all $j\neq i$},\]
		and \eqref{2.1} follows.
		
		Combining \eqref{2.1} with the definition of $\Sigma$, we see if  $d((t,x),\widehat{G})\rightarrow +\infty$, then 
		\[q_i(t,x,\varphi(t,x))\rightarrow 0.\]
		In other words, as  $d((t,x),\widehat{G})\rightarrow +\infty$,  
		\be\label{app-facets}
		\left|\varphi(t,x)-\psi_i(t,x)\right|\rightarrow 0 .
		\ee
		In $\widehat{Q}_i$, by definition one has
		\begin{align*}
			\varphi-\psi=\varphi-\psi_i=-\frac{1}{\sin\theta_i}\ln \left(1-\sum_{j\neq i}e^{-\widehat{q}_j}\right).
		\end{align*}
		Hence in view of \eqref{2.1}, there exists a constant $C>0$ such that in $\widehat{Q}_i$, 
  \be\label{varphi-plane}
		\frac{1}{C}\sum_{j\neq i}e^{-\widehat{q}_j}\le \varphi-\psi\le C\sum_{j\neq i}e^{-\widehat{q}_j}.
		\ee
		Using \eqref{2.1} again, we also find another positive constant $C$ such that in $\widehat{Q}_i$, 
		\be\label{2.2}
		\frac{1}{C}h\le \sum_{j\neq i}e^{-\widehat{q}_j}\le Ch.
		\ee
		Combining \eqref{2.1} and \eqref{2.2}, we get \eqref{expoential close}.
	\end{proof}
\vskip 0.3cm
	
	We now collect some estimates for derivatives of $\varphi$.  
	\begin{lemma}\label{pro-phi}
		There exists a constant  $C>0$ independent of $\alpha$ and $\tau_i$ such that for each $i\in\{1,\cdots,n\}$, 
		\[
		\left|\partial_t\varphi-\frac{c_f}{\sin\theta_i}\right|+
		\left|\nabla\varphi+\nu_i \cot\theta_i\right|\le Ch  \quad 
		\text{in} ~~ \widehat{Q}_i,
		\]
		\be\label{xi-0}
		\frac{1}{C} h\le \frac{\partial_t\varphi}{\sqrt{1+|\nabla\varphi|^2}}-c_f\le C h \quad  \text{in} ~~ \R\times\R^{N-1},
		\ee
		and
		\[
		\left|\nabla\partial_t\varphi\right|+\left|\nabla^2\varphi\right|+\left|\nabla^3 \varphi\right|\le Ch \quad  \text{in} ~~ \R\times\R^{N-1}.
		\]
	\end{lemma}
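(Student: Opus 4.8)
The plan is to exploit the fact that each $q_i$ is affine in $(t,x,y)$: its gradient $\nabla_{(x,y)}q_i\equiv e_i=(\nu_i\cos\theta_i,\sin\theta_i)$ and its time derivative $\partial_tq_i\equiv-c_f$ are constant, and the parameters $\alpha,\tau_i$ enter \eqref{surface} only through the additive constants $\alpha\tau_i$. Write $a_i:=e^{-\widehat q_i}$, so $a_i>0$ and $\sum_{i=1}^na_i=1$ on $\Sigma$, and set $v:=\sum_{i=1}^na_ie_i=(v',v_N)\in\R^{N-1}\times\R$. Differentiating \eqref{surface} with the implicit function theorem and using $\sum_ia_i=1$, the first step is to record the closed forms
\[
\partial_t\varphi=\frac{c_f}{v_N},\qquad\nabla\varphi=-\frac{v'}{v_N},\qquad v_N=v\cdot e_0=\sum_{i}a_i\,(e_i\cdot e_0).
\]
Since $e_i\cdot e_0\ge c_1:=\min_j(e_j\cdot e_0)>0$ and $\sum_ia_i=1$, this already gives $c_1\le v_N\le|v|\le1$, so $\nabla\varphi$ and $\partial_t\varphi$ are bounded by constants depending only on $\{e_i\}$, not on $\alpha,\tau_i$. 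Also $\widehat q_i\ge0$ on $\Sigma$ (part (i) of the previous lemma), so $0<a_i\le1$, $h\le n^2$, and $\sum_{i<j}a_ia_j=\tfrac12h$.

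For \eqref{xi-0} I would compute directly that $\partial_t\varphi/\sqrt{1+|\nabla\varphi|^2}=c_f/|v|$ and
\[
1-|v|^2=\Big(\sum_ia_i\Big)^2-\Big|\sum_ia_ie_i\Big|^2=\sum_{i<j}a_ia_j\,|e_i-e_j|^2,
\]
so that, the $e_i$ being pairwise distinct (hence $0<\min_{i\neq j}|e_i-e_j|^2\le4$), $1-|v|^2$ is comparable to $\sum_{i<j}a_ia_j=\tfrac12h$; combined with $c_1\le|v|\le1$ this yields \eqref{xi-0}. For the cell-wise estimates, fix $i$ and restrict to $\widehat Q_i$, where \eqref{2.2} gives $\sum_{k\neq i}a_k\le Ch$, hence $a_k\le Ch$ for each $k\neq i$. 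Using $\sum_ka_k=1$, the $k=i$ terms cancel in
\[
v_N-\sin\theta_i=\sum_{k\neq i}a_k(\sin\theta_k-\sin\theta_i),\qquad
-v'+v_N\nu_i\cot\theta_i=\sum_{k\neq i}a_k\big(\nu_i\cot\theta_i\sin\theta_k-\nu_k\cos\theta_k\big),
\]
so both right-hand sides are $O(h)$ in $\widehat Q_i$; dividing by $v_N\ge c_1$ gives $|\partial_t\varphi-c_f/\sin\theta_i|+|\nabla\varphi+\nu_i\cot\theta_i|\le Ch$ there.

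The second- and higher-order bounds follow from one further identity: differentiating $\sum_ia_i\equiv1$ in $x$, and writing $\nabla_xa_i=-a_ib_i$ with $b_i:=\nu_i\cos\theta_i+\sin\theta_i\,\nabla\varphi$ (so $|b_i|\le C$ since $\nabla\varphi$ is bounded), one gets $\sum_ia_ib_i=0$. Substituting $\nabla_xa_i=-a_ib_i$ into $\nabla v'$ and $\nabla v_N$ and simplifying the expression for $\nabla(\nabla\varphi)$ with $v'=-v_N\nabla\varphi$ and $\sum_ia_ib_i=0$, one obtains the clean formula $\nabla^2\varphi=v_N^{-1}\sum_ia_i\,b_i\otimes b_i$; expanding $\sum_{i,j}a_ia_j(b_i-b_j)\otimes(b_i-b_j)$ and using $\sum_ia_ib_i=0$, $\sum_ia_i=1$ gives $\sum_ia_ib_i\otimes b_i=\sum_{i<j}a_ia_j(b_i-b_j)\otimes(b_i-b_j)$, whence $|\nabla^2\varphi|\le C\sum_{i<j}a_ia_j=\tfrac C2h$. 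The same computation gives $\nabla\partial_t\varphi=-c_fv_N^{-2}\nabla v_N$ with $\nabla v_N=-\sum_ia_i\sin\theta_i\,b_i=-\sum_{i<j}a_ia_j(\sin\theta_i-\sin\theta_j)(b_i-b_j)$, so $|\nabla\partial_t\varphi|\le Ch$. Finally, differentiating $\nabla^2\varphi=v_N^{-1}\sum_ia_ib_i\otimes b_i$ once more, every term contains either a factor $\nabla(a_ia_j)=-a_ia_j(b_i+b_j)$ retaining $a_ia_j$, or $\nabla b_i=\sin\theta_i\,\nabla^2\varphi=O(h)$, or $\nabla(v_N^{-1})=-v_N^{-2}\nabla v_N=O(h)$; since $h$ is bounded, each term is $O(h)$, giving $|\nabla^3\varphi|\le Ch$. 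At every stage the constant depends only on $c_f$, $c_1$, and $\min_{i\neq j}|e_i-e_j|$, hence not on $\alpha$ or $\tau_i$.

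The only genuinely tedious point is the bookkeeping for $\nabla^3\varphi$, but the identity $\sum_ia_ib_i=0$ — equivalently, the observation that any affine-weighted, mean-zero combination $\sum_ia_i c_ib_i$ of the $b_i$ equals $\sum_{i<j}a_ia_j(c_i-c_j)(b_i-b_j)=O(h)$ — turns each differentiation into a routine computation, so no conceptual obstacle arises.
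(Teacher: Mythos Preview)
Your proof is correct and follows the same underlying method as the paper: implicitly differentiate the defining relation $\sum_i e^{-\widehat q_i}=1$ and observe that all ``diagonal'' terms cancel so that every derivative is controlled by the off-diagonal weight $h=\sum_{i\neq j}a_ia_j$. The paper carries this out by writing out the explicit triple-sum formula for $\nabla^2\varphi$ and bounding it term by term, and for $\nabla^3\varphi$ simply asserts that the same computation works; your route packages the same cancellations more cleanly by introducing $v=\sum_ia_ie_i$ and $b_i=\nabla_x\widehat q_i$, deriving the closed form $\nabla^2\varphi=v_N^{-1}\sum_ia_i\,b_i\otimes b_i$, and then invoking the variance identity $\sum_ia_i\,b_i\otimes b_i=\sum_{i<j}a_ia_j(b_i-b_j)\otimes(b_i-b_j)$ (valid because $\sum_ia_i=1$ and $\sum_ia_ib_i=0$). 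This makes the $O(h)$ bound immediate and, more usefully, gives an honest argument for $\nabla^3\varphi$ where the paper is silent. The gain is purely organizational---no new ideas are needed---but it is a genuinely tidier presentation.
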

	
	\begin{proof}
		Substituting $y=\varphi(t,x)$ into \eqref{surface} and differentiating it in $t$ and $x$, one gets
		\be\label{phitnablaphi} 
		\partial_t\varphi=\frac{c_f}{\sum_{i=1}^{n}e^{-\widehat{q}_i}\sin\theta_i} ,\qquad \nabla\varphi=-\frac{\sum_{i=1}^{n}e^{-\widehat{q}_i}\nu_i\cos\theta_i}{\sum_{i=1}^{n}e^{-\widehat{q}_i}\sin\theta_i},
		\ee
		\[\nabla\partial_t\varphi=\frac{\sum_{i=1}^{n}e^{-\widehat{q}_i}(\nu_i\cos\theta_i+\nabla\varphi\sin\theta_i)(\partial_t\varphi\sin\theta_i -c_f)}{\sum_{i=1}^{n}e^{-\widehat{q}_i}\sin\theta_i},\]
		and
		\begin{align*}
			&\nabla^2\varphi=\left(\partial_{x_k}\partial_{x_l}\varphi\right)\\
			=&\left(\frac{\sum_{i=1}^{n}e^{-\widehat{q}_i}\sum_{j=1}^{n}\sum_{s=1}^{n}e^{-(\widehat{q}_j+\widehat{q}_s)}( \nu_i^k\cos\theta_i \sin\theta_s - \nu_s^k\sin\theta_i\cos\theta_s )( \nu_i^l\cos\theta_i \sin\theta_j - \nu_j^l \sin\theta_i\cos\theta_j )}{(\sum_{i=1}^{n}e^{-\widehat{q}_i}\sin\theta_i)^3}\right),
		\end{align*}
		where $\nu_i^k$ denotes the $k$-th component of $\nu_i$.  
		
		Since $\min_{1\le i\le n} q_i(t,x,\varphi(t,x))\ge 0$ and $\theta_i\in (0,\pi/2]$, one sees that $\partial_t\varphi$, $|\nabla \varphi|$, $|\nabla\partial_t\varphi|$ and $|\nabla^2 \varphi|$ are uniformly bounded on $\R\times\R^{N-1}$. Moreover, because $(\nu_i,\theta_i)\neq (\nu_j,\theta_j)$ for $i\neq j$, there exists a positive constant $C$ such that in $\widehat{Q}_i$ ,
		\be\label{phit}
		\left|\partial_t\varphi-\frac{c_f}{\sin\theta_i}\right|=\left |\frac{\sum_{j=1}^{n}e^{-\widehat{q}_j}(\sin\theta_i-\sin\theta_j)}{\sum_{j=1}^{n}e^{-\widehat{q}_j}\sin\theta_j\sin\theta_i} c_f \right|\le Ch,
		\ee
		\be\label{phix}
		|\nabla\varphi+\nu_i \cot\theta_i|=\left|\frac{\sum_{j=1}^{n}e^{-\widehat{q}_j}(\nu_i\sin\theta_j\cos\theta_i-\nu_j\cos\theta_j\sin\theta_i)}{\sum_{j=1}^{n}e^{-\widehat{q}_j}\sin\theta_j\sin\theta_i}\right|\le C h,
		\ee
		and in the whole $\R\times\R^{N-1}$,
		\be\label{phixx}
		|\nabla\partial_t\varphi|+|\nabla^2\varphi|\le C h.
		\ee
		The inequality \eqref{phixx} for $|\nabla^3\varphi|$ follows in a similar way.
		
		By \eqref{phitnablaphi} and the fact that $\sum_{i=1}^n e^{-\widehat{q}_i}=1$, we have
		\begin{align*}
			\frac{\partial_t\varphi}{\sqrt{1+|\nabla\varphi|^2}}-c_f
			=&\left(\frac{1}{\sqrt{(\sum_{i=1}^n e^{-\widehat{q}_i}\sin\theta_i)^2+(\sum_{i=1}^n e^{-\widehat{q}_i}\nu_i\cos\theta_i)^2}}-1\right)c_f\\
			=& c_f\frac{1-\sqrt{1-\sum_{i=1}^n \sum_{j=1}^n e^{-(\widehat{q}_i+\widehat{q}_j)}(1-e_i\cdot e_j)}}{\sqrt{(\sum_{i=1}^n e^{-\widehat{q}_i}\sin\theta_i)^2+(\sum_{i=1}^n e^{-\widehat{q}_i}\nu_i\cos\theta_i)^2}}\\
			=& c_f\frac{1-\sqrt{1-\sum_{i\neq j}^n e^{-(\widehat{q}_i+\widehat{q}_j)}(1-e_i\cdot e_j)}}{\sqrt{(\sum_{i=1}^n e^{-\widehat{q}_i}\sin\theta_i)^2+(\sum_{i=1}^n e^{-\widehat{q}_i}\nu_i\cos\theta_i)^2}}.
		\end{align*}
		Since $\theta_i\in (0,\pi/2]$ and $\sum_{i=1}^n e^{-\widehat{q}_i}=1$, there exists a universal constant $C>0$ such that 
		\[\left(\sum_{i=1}^n e^{-\widehat{q}_i}\sin\theta_i\right)^2+\left(\sum_{i=1}^n e^{-\widehat{q}_i}\nu_i\cos\theta_i\right)^2\le C.\]
		Moreover, because $e_i\neq e_j$ for $i\neq j$, $1-e_i\cdot e_j>0$. Thus
		\[1-\sqrt{1-\sum_{i\neq j}^n e^{-(\widehat{q}_i+\widehat{q}_j)}(1-e_i\cdot e_j)}\ge C\sum_{i\neq j}^n e^{-(\widehat{q}_i+\widehat{q}_j)}, \]
for some $C>0$ and \eqref{xi-0}  follows.
	\end{proof}

\begin{remark}
 From the above proof, we see that the Hessian matrix $\nabla^2\varphi$ is positive definite. Hence the surface $\{y=\varphi(t,x)\}$ is convex in $\R^N$ for every $t$. 
\end{remark}

\subsection{A pair of sub- and super-solutions}\label{subsection:sup-sub}
	
First let
	\[\underline{u}(t,x,y):= \max_{1\le i\le n} g(x\cdot \nu_i\cos\theta_i +y\sin\theta_i -c_f t +\tau_i).\]
Because for each $i$, $g(x\cdot \nu_i\cos\theta_i +y\sin\theta_i -c_f t +\tau_i)$ is a solution of \eqref{RDxy}, $\underline{u}$ is a subsolution.

The construction of a supersolution requires more work. 
For  any $\alpha>0$ and any $\varepsilon>0$, define 
	\[\overline{u}(t,x,y):=\min\left\{g\left(\overline{\xi}(t,x,y)\right) +\varepsilon h(\alpha t,\alpha x),1 \right\},\]
	where
	\[\overline{\xi}(t,x,y)=\frac{y-\frac{1}{\alpha}\varphi (\alpha t, \alpha x)}{\sqrt{1+|\nabla\varphi (\alpha t, \alpha x)|^2}}.\]
	In the sequel, we will denote $\varphi (\alpha t, \alpha x)$ by $\varphi$ for short. But one should keep in mind that $\varphi$ and its derivatives take values at $(\alpha t, \alpha x)$. We also emphasize that $\underline{u}$ does not depend on the parameter $\alpha$ and $\varepsilon$, while $\overline{u}$ depends on them, although we will not write this dependence explicitly.

	A direct computation shows that
	\[-\partial_t\overline{\xi} -c_f =\frac{\partial_t\varphi}{\sqrt{1+|\nabla\varphi|^2}}-c_f +\alpha \overline{\xi}\frac{ \nabla\varphi\cdot \nabla\partial_t\varphi}{1+|\nabla\varphi|^2}.\]
	By Lemma~\ref{pro-phi}, one has the following lemma.
	
	\begin{lemma}\label{pro-xi}
		There exists a positive constant $C$  such that for $(t,x,y)\in\R\times\R^{N}$,
		\be\label{xi-1}
		\frac{1}{C} h(\alpha t,\alpha x)\le \frac{\partial_t\varphi}{\sqrt{1+|\nabla\varphi|^2}}-c_f\le C h(\alpha t,\alpha x),
		\ee
		\be\label{xi-2}
		\left|\frac{\alpha \nabla\varphi\cdot \nabla\partial_t\varphi}{1+|\nabla\varphi|^2}\overline{\xi}\right|\le  C \alpha h(\alpha t,\alpha x)|\overline{\xi}|, 
		\ee
		\be\label{xi-3}
		\left|1-|\nabla_x\overline{\xi}|^2 -|\partial_y \overline{\xi}|^2\right|\le  C \alpha h(\alpha t,\alpha x)\left (|\overline{\xi}|+\overline{\xi}^2\right),
		\ee
		\be\label{xi-4}
		|\Delta_x \overline{\xi}|\le C \alpha h(\alpha t,\alpha x)(1+|\overline{\xi}|+ \alpha |\overline{\xi}|).
		\ee
	\end{lemma}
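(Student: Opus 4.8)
The whole lemma is a corollary of Lemma~\ref{pro-phi} together with one elementary observation: $h$ is uniformly bounded on $\R\times\R^{N-1}$. Indeed, \eqref{surface} forces each $e^{-\widehat{q}_i}\le 1$, so every summand $e^{-(\widehat{q}_i+\widehat{q}_j)}$ in $h$ is at most $1$ and $h\le n(n-1)$; in particular $h^{2}\le n(n-1)\,h$, so any surplus factor of $h$ may be traded for a constant. Recall also that, by the convention of this subsection, $\partial_t\varphi$, $\nabla\varphi$, $\nabla^{2}\varphi$, $\nabla^{3}\varphi$ and $\nabla\partial_t\varphi$ all denote derivatives of the original $\varphi$ evaluated at $(\alpha t,\alpha x)$, so that a $t$- or $x$-derivative falling on such a quantity produces, via the chain rule, exactly one extra factor $\alpha$; this is the only route by which the scaling parameter enters.

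Given this, \eqref{xi-1} is nothing but \eqref{xi-0} of Lemma~\ref{pro-phi} read at the point $(\alpha t,\alpha x)$, so there is nothing to prove. For \eqref{xi-2} I would bound $\tfrac{1}{1+|\nabla\varphi|^{2}}\le 1$, use that $|\nabla\varphi|$ is uniformly bounded and that $|\nabla\partial_t\varphi|\le Ch$ by Lemma~\ref{pro-phi}, and carry the factor $\overline\xi$ along unchanged; this yields the bound $C\alpha h|\overline\xi|$ at once. Together with \eqref{xi-1}, this estimates the two pieces of $-\partial_t\overline\xi-c_f$ displayed before the lemma.

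For \eqref{xi-3} I would first compute, writing $J:=\sqrt{1+|\nabla\varphi|^{2}}$ and using the identity $y-\tfrac1\alpha\varphi=J\,\overline\xi$,
\[
\partial_y\overline\xi=\frac1J,\qquad
\nabla_x\overline\xi=-\frac{\nabla\varphi}{J}-\frac{\alpha\,\overline\xi}{J^{2}}\,(\nabla^{2}\varphi)\nabla\varphi,
\]
the second identity following from the quotient rule together with $\nabla_xJ=\tfrac{\alpha}{J}(\nabla^{2}\varphi)\nabla\varphi$. Squaring and adding, the leading contribution $\tfrac{|\nabla\varphi|^{2}}{J^{2}}+\tfrac1{J^{2}}$ equals $1$ and cancels the $1$ on the left-hand side, while the remainder is a cross term of size $O(\alpha h|\overline\xi|)$ plus a quadratic term of size $O(\alpha^{2}h^{2}\overline\xi^{2})$; using $|\nabla^{2}\varphi|\le Ch$, $|\nabla\varphi|\le C$, $J\ge1$ and absorbing one power of $h$ gives \eqref{xi-3}.

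Estimate \eqref{xi-4} is the one that requires genuine bookkeeping, and is the step I expect to be the main obstacle --- though the difficulty is organizational rather than conceptual. One differentiates the formula for $\nabla_x\overline\xi$ once more in $x$; each resulting term contains exactly one of $\nabla^{2}\varphi$, $\nabla^{3}\varphi$ or the product $(\nabla^{2}\varphi)(\nabla^{2}\varphi)$ --- each bounded by $Ch$ after $h^{2}\le Ch$ --- possibly multiplied by an already-estimated component of $\nabla_x\overline\xi$ (hence by $O(1)$ or $O(\alpha h|\overline\xi|)$) and by one or two chain-rule factors $\alpha$. Sorting the terms by their powers of $\alpha$ and $\overline\xi$, what survives are precisely contributions of type $\alpha h$, $\alpha h|\overline\xi|$ and $\alpha^{2}h|\overline\xi|$, that is, the right-hand side $C\alpha h(1+|\overline\xi|+\alpha|\overline\xi|)$. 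No input beyond Lemma~\ref{pro-phi} and the boundedness of $h$ is needed.
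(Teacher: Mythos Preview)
Your proposal is correct and follows essentially the same route as the paper: compute $\nabla_x\overline{\xi}$, $\partial_y\overline{\xi}$ and $\Delta_x\overline{\xi}$ explicitly from the definition of $\overline{\xi}$ (the paper in fact writes out the resulting formulas in closed form rather than describing the bookkeeping), then plug in the bounds of Lemma~\ref{pro-phi}. Your explicit remark that $h\le n(n-1)$, so that $h^{2}\le Ch$ and surplus factors can be absorbed, is exactly what the paper uses tacitly when passing from the raw formulas to \eqref{xi-3}--\eqref{xi-4}.
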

	
	\begin{proof}
		The first two inequalities are just scalings of those in  Lemma~\ref{pro-phi}.
		
		To obtain \eqref{xi-3} and \eqref{xi-4},  we use
		\begin{align*}
			&1-\left|\nabla_x\overline{\xi}\right|^2 -\left|\partial_y \overline{\xi}\right|^2\\
			=& 1-\left|\frac{\nabla \varphi}{\sqrt{1+|\nabla\varphi|^2}}+\frac{\overline{\xi}}{1+|\nabla\varphi|^2}\alpha\nabla\varphi\cdot \nabla^2\varphi \right|^2 -\frac{1}{1+|\nabla\varphi|^2}\\
			=& -2\alpha \overline{\xi}\frac{\nabla\varphi\cdot (\nabla^2\varphi \cdot \nabla\varphi)}{(1+|\nabla\varphi|^2)^{3/2}} -\alpha^2\overline{\xi}^2\frac{\left|\nabla\varphi\cdot \nabla^2\varphi\right|^2}{(1+|\nabla\varphi|^2)^2},
		\end{align*}
		and
		\begin{eqnarray*}
			\Delta_x \overline{\xi}&=&-\frac{\alpha\Delta \varphi}{\sqrt{1+|\nabla\varphi|^2}} +2\alpha\frac{\nabla\varphi\cdot (\nabla^2\varphi \cdot\nabla \varphi)}{(1+|\nabla\varphi|^2)^{3/2}} +3\alpha \overline{\xi}\frac{|\nabla^2\varphi\cdot \nabla\varphi|^2}{(1+|\nabla\varphi|^2)^{3/2}}\\ &-&\alpha^2\overline{\xi}\frac{ \nabla\cdot (\nabla^2\varphi\cdot \nabla\varphi)}{1+|\nabla\varphi|^2}.
		\end{eqnarray*}
		By substituting the estimates on $\nabla^2\varphi$ etc. from Lemma \ref{pro-phi} into these formulas, we obtain \eqref{xi-3} and \eqref{xi-4}.
	\end{proof}
	\vskip 0.3cm
	
	With these preliminary estimates, now we show that $\overline{u}$ is indeed a supersolutoin of \eqref{RDxy}. In the following we say a function $\alpha$ is a modulus function if it is defined on an interval $(0,\varepsilon_\ast)$, it is increasing and satisfies
 \[\lim_{\varepsilon\to0} \alpha(\varepsilon)=0.\]
	\begin{lemma}\label{lem supersolution}
		There exist $\varepsilon_0$ and a modulus function $\alpha(\varepsilon)$ defined on $(0,\varepsilon_0)$ such that for any $0<\varepsilon<\varepsilon_0$ and $0<\alpha\le\alpha(\varepsilon)$, the function $\overline{u}$ is a supersolution of \eqref{RDxy}  in  $\R \times\R^{N}$. 
	\end{lemma}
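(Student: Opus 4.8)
The plan is to verify the differential inequality $\partial_t\overline{u}-\Delta_x\overline{u}-\overline{u}_{yy}-f(\overline{u})\ge 0$ pointwise, away from the set where $g(\overline{\xi})+\varepsilon h=1$ (on that set $\overline{u}\equiv 1$ is trivially a supersolution, and one checks the usual "min of two supersolutions" patching works since the constant $1$ is a stationary supersolution by \eqref{F}). So I would work on the region where $\overline{u}=g(\overline{\xi})+\varepsilon h(\alpha t,\alpha x)$. Writing $w:=g(\overline{\xi})$ and using the chain rule together with the profile equation \eqref{PF} (which gives $-g''(\overline{\xi})=c_fg'(\overline{\xi})+f(g(\overline{\xi}))$), I would expand
\[
\partial_t\overline{u}-\Delta_x\overline{u}-\overline{u}_{yy}-f(\overline{u})
= g'(\overline{\xi})\Big(\partial_t\overline{\xi}-\Delta_x\overline{\xi}\Big)
+ g''(\overline{\xi})\Big(1-|\nabla_x\overline{\xi}|^2-|\partial_y\overline{\xi}|^2\Big)
+ \big(f(g(\overline{\xi}))-f(g(\overline{\xi})+\varepsilon h)\big) + \varepsilon\big(\partial_t h-\Delta_x h\big),
\]
where all $h$-terms are evaluated at $(\alpha t,\alpha x)$. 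The first three groups are controlled using Lemma~\ref{pro-xi}: since $-\partial_t\overline{\xi}-c_f$ and the defects $1-|\nabla_x\overline{\xi}|^2-|\partial_y\overline{\xi}|^2$, $\Delta_x\overline{\xi}$ are all bounded by $C\alpha h(\alpha t,\alpha x)$ times polynomials in $|\overline{\xi}|$, and since $g'(\overline{\xi})\overline{\xi}$, $g''(\overline{\xi})\overline{\xi}$, $g''(\overline{\xi})\overline{\xi}^2$ are uniformly bounded by \eqref{M}, the entire "error from bending the profile along $\Sigma$" is $\le C\alpha\, h(\alpha t,\alpha x)$. The genuinely useful positive term is $g'(\overline{\xi})(\partial_t\overline{\xi}-c_f)$: by \eqref{xi-0}/\eqref{xi-1}, $-\partial_t\overline{\xi}-c_f\ge \tfrac1C h(\alpha t,\alpha x)-C\alpha h(\alpha t,\alpha x)|\overline{\xi}|$, and since $g'<0$, this contributes $\ge \tfrac1C|g'(\overline{\xi})|h(\alpha t,\alpha x)$ up to another $O(\alpha h)$ error.

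The crux is then a \emph{three-zone argument} in the variable $\overline{\xi}$, exactly as in the construction of pyramidal fronts. In the middle zone $|\overline{\xi}|\le R$ one has $|g'(\overline{\xi})|\ge k>0$, so $g'(\overline{\xi})(\partial_t\overline{\xi}-c_f)\ge \tfrac{k}{C}h(\alpha t,\alpha x)$ dominates every other term once $\alpha$ is small; the $\varepsilon(\partial_t h-\Delta_x h)$ term also needs $\alpha$ small because each $t$- or $x$-derivative of $h(\alpha t,\alpha x)$ carries a factor $\alpha$, so $|\varepsilon(\partial_t h-\Delta_x h)|\le C\varepsilon\alpha h(\alpha t,\alpha x)$ — harmless. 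In the two outer zones $\overline{\xi}>R$ (so $g(\overline{\xi})\le\sigma$, near $0$) and $\overline{\xi}<-R$ (so $g(\overline{\xi})\ge 1-\sigma$, near $1$), the profile's derivatives are tiny, so I cannot rely on $g'(\overline{\xi})(\partial_t\overline{\xi}-c_f)$; instead the positivity comes from the reaction defect $f(g(\overline{\xi}))-f(g(\overline{\xi})+\varepsilon h)$. Here I use $\varepsilon h\le \varepsilon\cdot(\text{const})\le 4\sigma$ (shrink $\varepsilon_0$; note $h$ is bounded because $\sum e^{-\widehat q_i}=1$ forces each $\widehat q_i$ to be bounded below on $\widehat Q_j$, hence $h\le C$) together with \eqref{sigma}: on $[0,4\sigma]$ and on $[1-4\sigma,1]$ one has $f'\le f'(0)/2<0$ resp. $f'\le f'(1)/2<0$, so $f(g(\overline{\xi}))-f(g(\overline{\xi})+\varepsilon h)\ge \mu\,\varepsilon h(\alpha t,\alpha x)$ for the constant $\mu$ defined in the Notations — wait, I must be careful that $h$ in $\overline u$ is evaluated at $(\alpha t,\alpha x)$ while the bending errors are also in terms of $h(\alpha t,\alpha x)$, so all terms are comparable multiples of the single quantity $h(\alpha t,\alpha x)$. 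Thus in the outer zones the sum is $\ge (\mu\varepsilon - C\alpha - C\varepsilon\alpha)\,h(\alpha t,\alpha x)\ge 0$ once $\alpha\le\alpha(\varepsilon)$ with $\alpha(\varepsilon)\to0$ chosen so that $C\alpha(\varepsilon)(1+\varepsilon)\le \tfrac{\mu}{2}\varepsilon$.

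Assembling: fix $\varepsilon_0$ small enough that $\varepsilon_0\cdot\sup h\le \min\{4\sigma,\ \text{whatever is needed}\}$ and that the one-sided $f'$ bounds \eqref{sigma} apply on the relevant intervals; then for each $\varepsilon\in(0,\varepsilon_0)$ choose $\alpha(\varepsilon)$ so small that in the middle zone $\tfrac{k}{C}\ge C\alpha(\varepsilon)(1+\alpha(\varepsilon))(1+\varepsilon)$ and in the outer zones $C\alpha(\varepsilon)(1+\varepsilon)\le\tfrac{\mu}{2}\varepsilon$; shrinking further we may take $\alpha(\varepsilon)$ increasing with $\alpha(\varepsilon)\to0$ as $\varepsilon\to0$, i.e. a modulus function. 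Since $h(\alpha t,\alpha x)\ge 0$ everywhere and each term has been bounded by a constant multiple of it with the right sign, we get $\partial_t\overline{u}-\Delta_x\overline{u}-\overline{u}_{yy}-f(\overline{u})\ge 0$ on all of $\R\times\R^N$ in the region $\overline u<1$, and the patch with the constant $1$ completes the proof. The main obstacle is purely bookkeeping: making sure every one of the many error terms produced by differentiating $g(\overline{\xi})$ and by the defects in Lemma~\ref{pro-xi} is genuinely of the form $(\text{const})\cdot\alpha\cdot h(\alpha t,\alpha x)$ — in particular that the polynomial factors $|\overline{\xi}|,\overline{\xi}^2$ are always absorbed by the exponential decay of $g',g''$ via \eqref{M}, and that the three zones in $\overline{\xi}$ are handled with the correct dominant term in each.
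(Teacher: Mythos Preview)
Your approach is essentially the paper's proof: expand via the chain rule and the profile ODE, control all geometric errors through Lemma~\ref{pro-xi} and \eqref{M} as $O(\alpha\, h(\alpha t,\alpha x))$, then run a three-zone argument in $\overline{\xi}$. One small slip to fix: in the middle zone $|\overline{\xi}|\le R$ the reaction defect $f(g(\overline{\xi}))-f(\overline{u})\ge -L\varepsilon\, h(\alpha t,\alpha x)$ is \emph{not} absorbed by taking $\alpha$ small; you need $\varepsilon_0$ small enough that $L\varepsilon_0<k/C$ (the paper takes $\varepsilon_0\le k/(2CL)$), so make that explicit rather than hiding it in ``whatever is needed.'' Also, after substituting the profile equation your displayed expansion should read $g'(\overline{\xi})(\partial_t\overline{\xi}+c_f)$, not $g'(\overline{\xi})\partial_t\overline{\xi}$; you use the correct quantity $-\partial_t\overline{\xi}-c_f$ in the subsequent argument, so this is only a typo.
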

	\begin{proof}	
		By the definition of $h$, one has
		\[\partial_th(t,x)=c_f h(t,x),\]
		Moreover,  there exists a constant $C>0$ such that
		\begin{align*}
			|\Delta h(t,x)|=&\left|\sum_{i, j\in\{1,\cdots,n\},i\neq j } e^{-(\widehat{q}_i+\widehat{q}_j)}\left|\nu_i\cos\theta_i +\nu_j\cos\theta_j +\nabla\varphi \sin\theta_i\right|^2 +\sum_{i, j\in\{1,\cdots,n\},i\neq j } e^{-(\widehat{q}_i+\widehat{q}_j)}\Delta \varphi (\sin\theta_i +\sin\theta_j)\right|\\
			\le& C h(t,x).
		\end{align*}
After enlarging the constant $C$ if necessary, we can make sure that Lemma~\ref{pro-xi} still holds. 
	 Let 
		\[\varepsilon_0=\min \left\{\frac{\sigma}{n^2},\frac{k }{2CL},1\right\},\]
and for $0<\varepsilon<\varepsilon_0$,
		\[\alpha(\varepsilon)=\min\left\{1,-\frac{f^\prime(0)\varepsilon}{2(6C M + c_f + C)},-\frac{f^\prime(1)\varepsilon}{2(6C M + c_f +   C)}\right\}.\]
		
		To show that $\overline{u}$ is a supersolution,	one needs  only to consider the domain where $\overline{u}<1$. By the definition of $\overline{u}$,  Lemma~\ref{pro-xi} and \eqref{M}, in $\{\overline{u}<1\}$ it holds that
		\begin{eqnarray*}
			N(t,x,y)&:=&\partial_t\overline{u} -\Delta \overline{u} -f(\overline{u})\\
			&=&g^\prime(\overline{\xi})\left(\partial_t\overline{\xi} +c_f\right) +g^{\prime\prime}(\overline{\xi})\left(1-|\nabla_x \overline{\xi}|^2-|\partial_y\overline{\xi}|^2\right)-g^\prime(\overline{\xi})\Delta_x\overline{\xi} \\
			&&+f(g(\overline{\xi})) -f(\overline{u})+\varepsilon \alpha \partial_th(\alpha t,\alpha x)  -\varepsilon \alpha^2 \Delta h(\alpha t,\alpha x) \\
			&\ge& -g^\prime(\overline{\xi})\frac{1}{C} h(\alpha t,\alpha x) -M \alpha C h(\alpha t,\alpha x) -2M \alpha C h(\alpha t,\alpha x) -3M\alpha C h(\alpha t,\alpha x) \\
			&&+f(g(\overline{\xi})) -f(\overline{u})+\varepsilon \alpha c_f h(\alpha t,\alpha x) -\varepsilon \alpha^2 C h(\alpha t,\alpha x) \\
			&\ge & -g^\prime(\overline{\xi}) \frac{1}{C} h(\alpha t,\alpha x) -\alpha  h(\alpha t,\alpha x) (6C M +   c_f +   C)+f(g(\overline{\xi})) -f(\overline{u}),
		\end{eqnarray*}
		provided that $0<\varepsilon<\varepsilon_0$ and $0<\alpha\le \alpha(\varepsilon)$. 
		
		In $\{\overline{\xi}\ge R\}$,  if $0<\varepsilon<\varepsilon_0$, then
		\[\overline{u}=g(\overline{\xi})+\varepsilon h(\alpha t,\alpha x)\le 2\sigma.\]   
  Here, we have used the fact  that $h(t,x)\le n^2$. Therefore
		\[f(g(\overline{\xi})) -f(\overline{u})\ge -\frac{f^\prime(0)}{2} \varepsilon h(\alpha t,\alpha x).\]
		Then by the facts that $g^\prime<0$ and $0<\alpha\le \alpha(\varepsilon)$, we deduce that $N\ge 0$ in $\{\overline{\xi}\ge R\}$.  In the same way, we can show that $N\ge 0$ in $\{\overline{\xi}\le -R\}$. 
		
		In $\{-R\le \overline{\xi}\le R\}$,  $-g^\prime\ge k$. Moreover,
		\[f(g(\overline{\xi})) -f(\overline{u})\ge -L \varepsilon h(\alpha t,\alpha x).\]
		Therefore, using the facts that $0<\varepsilon<\varepsilon_0$ and $0<\alpha\le \alpha(\varepsilon)$, we get
		\[N(t,x,y)\ge k  \frac{1}{C} h(\alpha t,\alpha x)-\alpha  h(\alpha t,\alpha x)(6C M +   c_f +   C)-L \varepsilon h(\alpha t,\alpha x)\ge 0.\]
		
		Putting  estimates in these three regions together, we deduce that $N\ge 0$ everywhere in $\{\overline{u}<1\}$. 
	\end{proof}
\vskip 0.3cm

Next, we need to compare $\overline{u}$ and $\underline{u}$. Recall that $\mathcal{P}$ and $\mathcal{R}$ are the polytope and its ridges without the scaling parameter $\alpha$ defined in Section~2.

	\begin{lemma}\label{lem comparison of sup and sub}
		\begin{enumerate}
			\item[(i)] In $\R\times\R^N$,	$\overline{u}> \underline{u}$.
			\item[(ii)] For any $\varepsilon\in(0,\varepsilon_0)$, there exists an $\rho(\varepsilon)$ such that in $\{d((t,x,y),\partial\mathcal{P})\ge \rho(\varepsilon)\}$ , \[|\overline{u}(t,x,y)-\underline{u}(t,x,y)|\le (n^2+1)\varepsilon.\]
			\item [(iii)] For any $M>0$  fixed, in $\{d((t,x,y),\partial\mathcal{P})\le M\}$ , 
			\be\label{coincide at infinity}
			|\overline{u}(t,x,y)-\underline{u}(t,x,y)|\rightarrow 0 \hbox{ uniformly as $d((t,x,y),\mathcal{R})\rightarrow +\infty$}.
			\ee
		\end{enumerate}
	\end{lemma}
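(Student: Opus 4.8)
Throughout write $p_i(t,x,y):=x\cdot\nu_i\cos\theta_i+y\sin\theta_i-c_ft+\tau_i$, so that $\underline u=\max_i g(p_i)$. All three assertions compare $\max_i g(p_i)$ with (the truncation of) $g(\overline\xi)+\varepsilon h(\alpha t,\alpha x)$, and the plan is to read them all off from the single algebraic identity
\[
p_i(t,x,y)\;=\;\tfrac1\alpha\,\widehat q_i(\alpha t,\alpha x)\;+\;\sin\theta_i\,\sqrt{1+\big|\nabla\varphi(\alpha t,\alpha x)\big|^2}\;\overline\xi(t,x,y),\qquad i=1,\dots,n,
\]
which follows by writing $q_i(t,x,y)=\sin\theta_i(y-\psi_i(t,x))$, evaluating $p_i$ at $y=\tfrac1\alpha\varphi(\alpha t,\alpha x)$ (this produces $\tfrac1\alpha\widehat q_i(\alpha t,\alpha x)\ge0$, since $\Sigma\subset\mathcal Q$) and then using that $p_i$ is affine in $y$. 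Besides the identity I would use only: $0\le\widehat q_i\le C$ and $\widehat q_i\le Ch$ on $\widehat Q_i$; the bounds $|\varphi-\psi|\le Ch$ and $|\nabla\varphi+\nu_i\cot\theta_i|\le Ch$ on $\widehat Q_i$ from Lemma~\ref{pro-phi} and the first lemma above; the fact that $\mathcal P=\{y\ge\tfrac1\alpha\psi(\alpha\cdot)\}$ is an epigraph with $\Sigma=\{y=\tfrac1\alpha\varphi(\alpha\cdot)\}$ lying above $\partial\mathcal P$ at a finite (though $\alpha$-dependent) vertical distance; $h\le n^2$; and the behaviour $g(-\infty)=1$, $g(+\infty)=0$ with $g$ and $g'$ decaying exponentially at $\pm\infty$.

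\textit{Proof of (i).} As $g<1$ and $h>0$, it suffices to prove $g(\overline\xi)+\varepsilon h\ge g(p_i)$ for each $i$, and by monotonicity of $g$ this is automatic wherever $p_i\ge\overline\xi$. The identity yields $p_i\ge\overline\xi$ on $\{\overline\xi\le0\}\cup\{\sin\theta_i\sqrt{1+|\nabla\varphi(\alpha\cdot)|^2}\ge1\}$, so the only work lies on $\{\overline\xi>0,\ \sin\theta_i\sqrt{1+|\nabla\varphi(\alpha\cdot)|^2}<1\}$, which I would partition (by the sizes of $\widehat q_i(\alpha\cdot)$, of $h(\alpha\cdot)$, of $\overline\xi$, and by proximity to the ridges). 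Where $\widehat q_i(\alpha\cdot)$ is bounded below, the retained term $\tfrac1\alpha\widehat q_i$ is of order $\alpha^{-1}$, so either $p_i\ge\overline\xi$ directly, or $\overline\xi\gtrsim\alpha^{-1}$ and then $p_i\ge\tfrac1\alpha\widehat q_i\gtrsim\alpha^{-1}$, whence $g(p_i)\le\varepsilon h$ by exponential decay once $\alpha\le\alpha(\varepsilon)$. Where $\widehat q_i(\alpha\cdot)$ is small, the finer estimates on $\varphi$ near the $i$-th facet give $0\le1-\sin\theta_i\sqrt{1+|\nabla\varphi(\alpha\cdot)|^2}\le Ch(\alpha\cdot)$ and $\widehat q_i(\alpha\cdot)\le Ch(\alpha\cdot)$, so $0<\overline\xi-p_i\le Ch(\alpha\cdot)\,\overline\xi$; once more, either $p_i\ge\overline\xi$ (which holds as long as $\overline\xi\lesssim\alpha^{-1}$), or $p_i\gtrsim\overline\xi\gtrsim\alpha^{-1}$ and $g(p_i)-g(\overline\xi)\le|g'(p_i)|\,\overline\xi\,Ch(\alpha\cdot)$ with the factor $|g'(p_i)|\,\overline\xi$ exponentially small at the scale $\alpha^{-1}$, hence $\le\varepsilon h(\alpha\cdot)$ for $\alpha\le\alpha(\varepsilon)$. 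This estimate is the part I expect to be the main obstacle: the cushion $\varepsilon h$ one is allowed may itself be exponentially small away from the ridges, but wherever the naive comparison $p_i\ge\overline\xi$ fails both phases $p_i,\overline\xi$ are of order $\alpha^{-1}$, where $g$ and $g'$ are negligible — and this is exactly why $\alpha$ must be tied to $\varepsilon$ by a modulus function (possibly even smaller than the one in Lemma~\ref{lem supersolution}).

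\textit{Proof of (ii).} I would choose $\rho(\varepsilon)$ so large that $g(c_1\rho(\varepsilon))<\varepsilon$ and $g(-c_2\rho(\varepsilon))>1-\varepsilon$ for suitable structural constants $c_1,c_2>0$, and so that $\rho(\varepsilon)$ exceeds the vertical gap between $\Sigma$ and $\partial\mathcal P$. Using that $\mathcal P$ is an epigraph: if $d((t,x,y),\partial\mathcal P)\ge\rho(\varepsilon)$ and $(t,x,y)\in\mathcal P$, then $\min_ip_i\ge c_1\rho(\varepsilon)$, so $\underline u\le g(c_1\rho(\varepsilon))<\varepsilon$, while $(t,x,y)$ lies well above $\Sigma$, so $\overline\xi$ is large and $g(\overline\xi)<\varepsilon$; by (i), $h\le n^2$ and $\underline u\ge0$ one gets $|\overline u-\underline u|=\overline u-\underline u\le g(\overline\xi)+\varepsilon h\le(n^2+1)\varepsilon$. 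If instead $(t,x,y)\notin\mathcal P$, it lies a vertical distance $\ge\rho(\varepsilon)$ below $\partial\mathcal P$, hence below $\Sigma$, so $\overline\xi$ is very negative and $\min_ip_i\le-c_2\rho(\varepsilon)$; then $g(\overline\xi)>1-\varepsilon$ and $\underline u>1-\varepsilon$, and since $\overline u\le1$ we conclude $|\overline u-\underline u|<\varepsilon$.

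\textit{Proof of (iii).} Fix $M>0$ and let $(t,x,y)$ vary over $\{d(\cdot,\partial\mathcal P)\le M\}$ with $d((t,x,y),\mathcal R)\to+\infty$. Such a point lies near the relative interior of a single facet $\widetilde{P}_i$ and far from every ridge, so $\min_j p_j=p_i$ with $|p_i|\le M$, and $d((\alpha t,\alpha x),G)\to+\infty$; consequently $h(\alpha\cdot)\to0$, $\tfrac1\alpha\widehat q_i(\alpha\cdot)\to0$ and $\nabla\varphi(\alpha\cdot)\to-\nu_i\cot\theta_i$, so $\sin\theta_i\sqrt{1+|\nabla\varphi(\alpha\cdot)|^2}\to1$, and the identity forces $\overline\xi-p_i\to0$. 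Hence $g(\overline\xi)\to g(p_i)=\underline u$ (with no truncation, since $p_i\ge-M$) and $\varepsilon h(\alpha\cdot)\to0$, which is exactly \eqref{coincide at infinity}.
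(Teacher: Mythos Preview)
Your approach is essentially the same as the paper's, and your algebraic identity $p_i = \tfrac{1}{\alpha}\widehat q_i(\alpha t,\alpha x) + \sin\theta_i\sqrt{1+|\nabla\varphi|^2}\,\overline\xi$ is exactly what the paper derives (in a slightly less tidy form) at the start of its proof of (i). The core mechanism in (i)---when the na\"ive comparison $p_i\ge\overline\xi$ fails, both phases are forced to be of order $\alpha^{-1}$, and then the exponential decay of $g'$ absorbs the defect into the cushion $\varepsilon h$---is identical to the paper's.

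Two places where the paper's route is cleaner than your sketch. First, you set out to prove $g(\overline\xi)+\varepsilon h\ge g(p_i)$ for \emph{every} $i$ at every point, but it suffices to check only the index with $p_i=\min_j p_j$, i.e.\ to work in $\hat P_i$. The paper does this, and since $(t,x)\in\hat P_i$ iff $(\alpha t,\alpha x)\in\widehat Q_i$, the estimates $\widehat q_i\sim h$ and $|1-\sin\theta_i\sqrt{1+|\nabla\varphi|^2}|\le Ch$ from Lemma~\ref{pro-phi} are then automatic; this eliminates your Case~1/Case~2 split and the worry (which you correctly flag) that the cushion $\varepsilon h$ may itself be tiny. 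Second, your claim that $p_i\ge\overline\xi$ on $\{\overline\xi\le0\}\cup\{\sin\theta_i\sqrt{1+|\nabla\varphi|^2}\ge1\}$ is not quite right: on $\{\overline\xi<0,\ \sin\theta_i\sqrt{1+|\nabla\varphi|^2}>1\}$ your identity gives $p_i-\overline\xi=\tfrac{1}{\alpha}\widehat q_i+(\sin\theta_i\sqrt{1+|\nabla\varphi|^2}-1)\,\overline\xi$ with the second term negative, so $p_i<\overline\xi$ is possible there too. The paper handles both signs at once by deducing $|\xi_i|\ge A_1/\alpha$ (not merely $\xi_i\ge A_1/\alpha$) whenever $\overline\xi>\xi_i$ in $\hat P_i$; the case you missed is symmetric and would go through the same way once you add it.

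For (ii) your direct epigraph argument and the paper's contradiction argument are equivalent; for (iii) the two proofs coincide.
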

	
	\begin{proof}
		(i) Fix an $i\in\{1,\cdots,n\}$. Let 
  \[\xi_i(t,x,y)=x\cdot \nu_i\cos\theta_i +y\sin\theta_i -c_f t +\tau_i.\]
 
  In $\{\overline{\xi}\le \xi_i\}$, because $g$ is a decreasing function, one has
		\[
		\overline{u}(t,x,y)=g(\overline{\xi})+\varepsilon h(\alpha t,\alpha x)\ge g(\xi_i)=g(x\cdot \nu_i\cos\theta_i +y\sin\theta_i -c_f t +\tau_i).\]
  
 In $\hat{P}_i$ (the projection of $\widetilde{P}_i$ on the $(t,x)$-plane), $\underline{u}=g(\xi_i)$. So 
if here we have $\overline{\xi}(t,x,y)> \xi_i(t,x,y)$, then
\begin{align*}
			0<&\overline{\xi}(t,x,y)-\xi_i(t,x,y)\\
			=&\frac{y-\frac{1}{\alpha}\varphi(\alpha t,\alpha x)}{\sqrt{1+|\nabla\varphi|^2}}-(x\cdot \nu_i\cos\theta_i +y\sin\theta_i -c_f t +\tau_i)\\
			=& \left(\frac{1}{\sin\theta_i\sqrt{1+|\nabla\varphi|^2}}-1\right)\xi_i(t,x,y) -\frac{\alpha x\cdot \nu_i\cot\theta_i -\alpha c_f  t/\sin\theta_i +\alpha \tau_i/\sin\theta_i +\varphi(\alpha t,\alpha x)}{\alpha\sqrt{1+|\nabla\varphi|^2}}.
		\end{align*}
Combining this inequality with \eqref{expoential close} and Lemma~\ref{pro-phi}, we see that there exists a positive, universal constant $A_1$ such that 
		\[|\xi_i(t,x,y)|\ge \frac{h(\alpha t,\alpha x)}{\alpha \sqrt{1+|\nabla\varphi|^2}}\left|\frac{1}{\sin\theta_i \sqrt{1+|\nabla\varphi|^2}}-1\right|^{-1}\ge \frac{A_1}{\alpha}.\]
		Since the surface $\{y=\varphi(t,x)\}$ lies inside the polytope $\mathcal{Q}$, that is, $\varphi(\alpha t,\alpha x)\ge \psi_i(\alpha t,\alpha x)\ge 0$ for every $i\in\{1,\cdots,n\}$, we have
		\[\xi_i(t,x)<\overline{\xi}(t,x,y)=\frac{y-\frac{1}{\alpha}\varphi(\alpha t,\alpha x)}{\sqrt{1+|\nabla\varphi|^2}}\le \frac{x\cdot \nu_i\cos\theta_i +y\sin\theta_i -c_f t +\tau_i}{\sin\theta_i\sqrt{1+|\nabla\varphi|^2}}=\frac{\xi_i(t,x,y)}{\sin\theta_i\sqrt{1+|\nabla\varphi|^2}}.\]
Hence
		\begin{align*}
			\overline{u}(t,x,y)-g(\xi_i)=&g(\overline{\xi})-g(\xi_i)+\varepsilon h(\alpha t,\alpha x)\\
   =& g^\prime(\xi^\prime) \left(\overline{\xi}-\xi_i\right)+\varepsilon h(\alpha t,\alpha x)\\
			\ge& g^\prime(\xi^\prime)\xi_i\Big(\frac{1}{\sin\theta_i\sqrt{1+|\nabla\varphi|^2}}-1\Big)+\varepsilon h(\alpha t,\alpha x)\\
			\ge& -\left|g^\prime(\xi^\prime)\xi_i^2\right| \frac{\alpha}{A_1} h(\alpha t,\alpha x)+\varepsilon h(\alpha t,\alpha x)
		\end{align*}
where $\xi^\prime$ is between $\xi_i$ and $\xi_i/(\sin\theta_i \sqrt{1+|\nabla\varphi|^2})$. Since $|g^\prime(\xi^\prime)\xi_i^2|$ is bounded, one can make $\alpha(\varepsilon)$ smaller so that $\overline{u}(t,x,y)-g(\xi_i)\ge 0$.  Because  $\cup_{i} \hat{P}_i=\R\times\R^{N-1}$, we then deduce that for any $(t,x,y)\in\R\times\R^N$,
		\[\overline{u}(t,x,y)\ge \max_{i\in\{1,\cdots,n\}}\left\{g(x\cdot \nu_i\cos\theta_i +y\sin\theta_i -c_f t +\tau_i)\right\}=\underline{u}(t,x,y).\]
Then by the strong maximum principle, we deduce that $\overline{u}>\underline{u}$ in $\R\times \R^N$.
		
		(ii)  Fix an $\alpha\in (0,\alpha(\varepsilon)]$.  Assume by the contrary that there exists an  $\varepsilon\in (0,\varepsilon_0)$ and a sequence of points $(t_k,x_k,y_k)$  such that $d((t_k,x_k,y_k),\partial\mathcal{P})\to+\infty$, but
\[|\overline{u}(t_k,x_k,y_k)-\underline{u}(t_k,x_k,y_k)|> (n^2+1)\varepsilon.\]
There are two cases.

{\bf Case 1.} The sequence  $(t_k,x_k,y_k)$ satisfies
		\be\label{c+infty}
		\min_{1\le i\le n}\left\{x_k\cdot \nu_i \cos\theta_i +y_k\sin\theta_i-c_f t_k + \tau_i\right\}\rightarrow +\infty,
		\ee

		This implies $\underline{u}(t_k,x_k,y_k)\rightarrow 0$. Since $\alpha>0$, the convergence \eqref{c+infty} still holds for $(t_k,x_k,y_k)$ replaced by $(\alpha t_k,\alpha x_k,\alpha y_k)$ and $\tau_i$ replaced by $\alpha \tau_i$.
		Thus $(\alpha t_k,\alpha x_k,\alpha y_k)\in \mathcal{Q}$ and  
		\[d((\alpha t_k,\alpha x_k,\alpha y_k),\partial\mathcal{Q})\rightarrow +\infty \hbox{ as $k\rightarrow +\infty$}.\]
		 Since the surface $\{y=\varphi(t,x)\}$ is bounded away from $\partial\mathcal{Q}$, one has that $\alpha y_k -\varphi(\alpha t_k ,\alpha x_k)\rightarrow +\infty$ and hence, $\overline{\xi}(t_k,x_k,y_k)\rightarrow +\infty$. This implies that $g(\overline{\xi}(t_k,x_k,y_k))\rightarrow 0$ and
		\[|\overline{u}(t_k,x_k,y_k)-\underline{u}(t_k,x_k,y_k)|\le \varepsilon \left(h(\alpha t_k,\alpha x_k)+\frac{1}{2}\right)\le \left(n^2+\frac{1}{2}\right)\varepsilon,  \hbox{ for $k$ large enough}.\]
  This is a contradiction.
  
	{\bf Case 2.} The sequence  $(t_k,x_k,y_k)$ satisfies
		\be \label{c-infty}
		\min_{1\le i\le n}\left\{x_k\cdot \nu_i \cos\theta_i +y_k\sin\theta_i-c_f t_k + \tau_i\right\}\rightarrow -\infty.
		\ee
  In this case, one can prove in the same way that $g(\overline{\xi}(t_k,x_k,y_k))\rightarrow 1$ and $\underline{u}(t_k,x_k,y_k)\rightarrow 1$ as $k\rightarrow +\infty$. This leads to the same contradiction.
		
		(iii) If $d((t,x,y),\partial\mathcal{P})\leq M$, there exists $i\in\{1,\cdots,n\}$ such that   as $d((t,x,y),\mathcal{R})\rightarrow +\infty$,
		\be\label{ibju}
		|x\cdot \nu_i \cos\theta_i +y\sin\theta_i-c_f t + \tau_i| \hbox{ is bounded and } x\cdot \nu_j \cos\theta_j +y\sin\theta_j-c_f t + \tau_j\rightarrow +\infty \hbox{ for $j\neq i$},
		\ee
		Hence $\underline{u}(t,x,y)=g(x\cdot \nu_i \cos\theta_i +y\sin\theta_i-c_f t + \tau_i)$. Since $\alpha>0$, \eqref{ibju} also holds for $(t,x,y)$ replaced by $(\alpha t,\alpha x,\alpha y)$ and $\tau_i$ replaced by $\alpha \tau_i$. In particular,
		\be\label{pibpju}
		q_i(\alpha t,\alpha x,\alpha y)\hbox{ is bounded and } q_j(\alpha t,\alpha x,\alpha y) \rightarrow +\infty \hbox{ for $j\neq i$}.
		\ee
	This implies that $(\alpha t,\alpha x,\alpha y)$ is bounded away from $\widetilde{Q}_i$ and $d((\alpha t,\alpha x,\alpha y),G)\rightarrow +\infty$. It also implies that $(\alpha t,\alpha x)\in \widehat{Q}_i$ and $d((\alpha t,\alpha x),\widehat{G})\rightarrow +\infty$. Then by \eqref{app-facets}, we deduce that
 \[\left|\varphi(\alpha t,\alpha x)-\left( -\alpha x\cdot \nu_i \cot\theta_i +\frac{c_f }{\sin\theta_i}\alpha t +\frac{\alpha \tau_i}{\sin\theta_i}\right)\right|\rightarrow 0\] and 
 \[|\nabla\varphi(\alpha t,\alpha x)+ \nu_i\cot \theta_i|\rightarrow 0.\] Hence
		\[\overline{\xi}(t,x,y)\rightarrow x\cdot \nu_i \cos\theta_i +y\sin\theta_i-c_f t + \tau_i,\]
		and by \eqref{ibju}, $|\varphi(\alpha t,\alpha x)-\alpha y|$ is bounded. By \eqref{pibpju}, 
		\[q_i(\alpha t,\alpha x,\varphi(\alpha t,\alpha x)) \hbox{ is bounded and } q_j(\alpha t,\alpha x,\varphi(\alpha t,\alpha x)) \rightarrow +\infty  \hbox{ for $j\neq i$}.\]
		It follows that $h(\alpha t,\alpha x)\rightarrow 0$, which gives us \eqref{coincide at infinity}.
	\end{proof}
	\vskip 0.3cm

 This lemma has the following corollary.
 \begin{corollary}\label{coro:rhoe}
 There exists an $\rho(\varepsilon)$ such that
 \[|\overline{u}(t,x,y)-\underline{u}(t,x,y)|\le (n^2+1)\varepsilon, \quad \hbox{in $\{d((t,x,y),\mathcal{R})\ge \rho(\varepsilon)\}$}.\]
 \end{corollary}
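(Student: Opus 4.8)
The plan is to patch together parts (ii) and (iii) of Lemma~\ref{lem comparison of sup and sub}. Fix $\varepsilon\in(0,\varepsilon_0)$. First I would invoke part (ii) to obtain a number $\rho_1=\rho_1(\varepsilon)>0$ such that
$|\overline{u}(t,x,y)-\underline{u}(t,x,y)|\le (n^2+1)\varepsilon$ holds on the set $\{d((t,x,y),\partial\mathcal{P})\ge \rho_1\}$. This $\rho_1$ is finite and, once $\varepsilon$ is frozen, is a fixed constant, so it is a legitimate choice of the fixed constant $M$ in part (iii).

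Next I would apply part (iii) with $M=\rho_1$: on the slab $\{d((t,x,y),\partial\mathcal{P})\le \rho_1\}$ one has $|\overline{u}-\underline{u}|\to 0$ uniformly as $d((t,x,y),\mathcal{R})\to+\infty$. Hence there is $\rho_2=\rho_2(\varepsilon)>0$ such that $|\overline{u}(t,x,y)-\underline{u}(t,x,y)|\le \varepsilon$ for every $(t,x,y)$ with $d((t,x,y),\partial\mathcal{P})\le \rho_1$ and $d((t,x,y),\mathcal{R})\ge \rho_2$. I would then set $\rho(\varepsilon):=\max\{\rho_1(\varepsilon),\rho_2(\varepsilon)\}$; in fact $\rho_2(\varepsilon)$ alone already suffices.

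Finally I would verify that $\rho(\varepsilon)$ works. Let $(t,x,y)$ satisfy $d((t,x,y),\mathcal{R})\ge \rho(\varepsilon)$. If $d((t,x,y),\partial\mathcal{P})\ge \rho_1$, then the bound $|\overline{u}-\underline{u}|\le (n^2+1)\varepsilon$ is exactly part (ii). Otherwise $d((t,x,y),\partial\mathcal{P})<\rho_1$, so the point lies in the slab $\{d(\cdot,\partial\mathcal{P})\le \rho_1\}$, and since $d((t,x,y),\mathcal{R})\ge \rho(\varepsilon)\ge \rho_2$, part (iii) gives $|\overline{u}-\underline{u}|\le \varepsilon\le (n^2+1)\varepsilon$. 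In either case the asserted estimate holds on $\{d((t,x,y),\mathcal{R})\ge \rho(\varepsilon)\}$.

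There is no real difficulty here beyond this bookkeeping; the only point that needs a moment's care is that the slab width $\rho_1$ from part (ii) depends on $\varepsilon$, but since $\varepsilon$ is fixed throughout, $\rho_1$ is a genuine constant and feeding it as $M$ into part (iii) is legitimate. One could equally well formulate a single combined argument instead of the two-region split, but the split above keeps the dependence of $\rho(\varepsilon)$ on $\varepsilon$ transparent.
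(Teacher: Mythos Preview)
Your proposal is correct and is exactly the argument the paper has in mind: the corollary is stated immediately after Lemma~\ref{lem comparison of sup and sub} with no proof, precisely because it follows by combining parts (ii) and (iii) in the two-region split you describe.
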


\subsection{Existence of entire solutions}

In this subsection, we prove the following result for \eqref{RDxy}, which clearly implies Theorem~\ref{Th1}.

\begin{proposition}\label{Proposition1}
Suppose $\{(\nu_i,\theta_i,\tau_i)\}_{i=1,\cdots,n}$ of $\mathbb{S}^{N-2}\times(0,\pi/2]\times\R$ satisfies $(\nu_i,\theta_i)\neq (\nu_j,\theta_j)$ for $i\neq j$. Then there exists an entire solution $U_{\nu_i,\theta_i,\tau_i}$ of \eqref{RDxy} satisfying
\[\underline{u}<U_{\nu_i,\theta_i,\tau_i}<1 \hbox{ in $\R\times\R^N$},\]
and
\[
\left|U_{\nu_i,\theta_i,\tau_i}(t,x,y)-\underline{u}(t,x,y)\right|\rightarrow 0,
\]
uniformly as $d((t,x,y),\mathcal{R})\rightarrow +\infty$. Moreover,
\begin{itemize}
\item for any $(t_0,x_0,y_0)\in\R\times\R^N$ satisfying $\min_{i=\{1,\cdots,n\}}\{x_0\cdot \nu_i \cos\theta_i +y_0\sin\theta_i-c_f t_0\}\ge 0$,  we have
\be\label{cone of monotonicity}
U(t-t_0,x-x_0,y-y_0) \ge U(t,x,y),
\ee
where the inequality is strict if $\min_{i=\{1,\cdots,n\}}\{x_0\cdot \nu_i \cos\theta_i +y_0\sin\theta_i-c_f t_0\}> 0$;
\item for fixed $(\nu_i,\theta_i)$, $U_{\nu_i,\theta_i,\tau_i}$ are decreasing in $\tau_i\in\R$. 
\end{itemize}
\end{proposition}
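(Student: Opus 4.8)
The plan is to build $U_{\nu_i,\theta_i,\tau_i}$ by the standard monotone iteration between the sub- and super-solutions of Subsection~\ref{subsection:sup-sub}. Fix $\varepsilon\in(0,\varepsilon_0)$ and $\alpha\in(0,\alpha(\varepsilon)]$, so that by Lemma~\ref{lem supersolution} $\overline{u}$ is a supersolution of \eqref{RDxy} and by Lemma~\ref{lem comparison of sup and sub}(i) $\underline{u}<\overline{u}$ on $\R\times\R^N$. For each $k\in\N$ let $v_k$ be the solution of the Cauchy problem for \eqref{RDxy} on $[-k,+\infty)\times\R^N$ with datum $v_k(-k,\cdot)=\underline{u}(-k,\cdot)$. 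Since $\underline u$ is a (generalized) subsolution, $\overline u$ a supersolution, and $\underline u(-k,\cdot)\le v_k(-k,\cdot)\le\overline u(-k,\cdot)$, the parabolic comparison principle gives $\underline u\le v_k\le\overline u$ on $[-k,+\infty)\times\R^N$; comparing $v_{k+1}$ with $v_k$ on $[-k,+\infty)\times\R^N$ and using $v_{k+1}(-k,\cdot)\ge\underline u(-k,\cdot)=v_k(-k,\cdot)$ shows $v_{k+1}\ge v_k$ there. Hence for each $(t,x,y)$ the sequence $\big(v_k(t,x,y)\big)_{k>-t}$ is nondecreasing and bounded, so it converges to some $U(t,x,y)$. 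Interior parabolic estimates bound the $v_k$ uniformly in $C^{2+\gamma,1+\gamma/2}$ on compact subsets of $\R\times\R^N$, so the convergence holds in $C^{2,1}_{loc}$ and $U$ is an entire solution of \eqref{RDxy} with $\underline u\le U\le\overline u$. Note $U$ depends only on $\{(\nu_i,\theta_i,\tau_i)\}$ (through $\underline u$), not on $\varepsilon$ or $\alpha$.

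I would then upgrade the inequalities to the strict ones and read off the asymptotics. Since $g\in(0,1)$ we have $0<\underline u\le U$, and since $\overline u<1$ on a nonempty open set $U\not\equiv1$, so $U<1$ everywhere by the strong maximum principle; likewise $U>\underline u$, using that $\underline u$ is a \emph{strict} generalized subsolution near each ridge point of $\mathcal{R}$ (being a maximum of the genuine solutions $g(x\cdot\nu_i\cos\theta_i+y\sin\theta_i-c_ft+\tau_i)$), which forces $v_k(t,\cdot)>\underline u(t,\cdot)$ for $t>-k$ and hence $U>\underline u$ in the limit. For the asymptotics, fix $\eta>0$, pick $\varepsilon=\min\{\eta/(n^2+1),\varepsilon_0/2\}$ and $\alpha\le\alpha(\varepsilon)$; the corresponding $\overline u$ still dominates every $v_k$ (same comparison as above), hence $U\le\overline u$, and Corollary~\ref{coro:rhoe} gives $0\le U-\underline u\le\overline u-\underline u\le(n^2+1)\varepsilon\le\eta$ on $\{d((t,x,y),\mathcal{R})\ge\rho(\varepsilon)\}$. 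As $\eta$ is arbitrary, $|U-\underline u|\to0$ uniformly as $d((t,x,y),\mathcal{R})\to+\infty$.

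For the cone monotonicity, let $(t_0,x_0,y_0)$ satisfy $\sigma_i:=x_0\cdot\nu_i\cos\theta_i+y_0\sin\theta_i-c_ft_0\ge0$ for all $i$, and write $\xi_i(t,x,y)=x\cdot\nu_i\cos\theta_i+y\sin\theta_i-c_ft+\tau_i$, so that $\xi_i(t-t_0,x-x_0,y-y_0)=\xi_i(t,x,y)-\sigma_i$. Since $g$ is decreasing, $\underline u(\cdot-(t_0,x_0,y_0))=\max_ig(\xi_i-\sigma_i)\ge\max_ig(\xi_i)=\underline u$ on $\R\times\R^N$, with strict inequality everywhere when all $\sigma_i>0$. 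Given $k$, set $j=\lfloor k-t_0\rfloor$, so that for $k$ large $-j\ge-k$ and $-j-t_0\ge-k$; then $v_k(\cdot-(t_0,x_0,y_0))$ and $v_j$ both solve \eqref{RDxy} on $[-j,+\infty)\times\R^N$, and at $t=-j$ one has $v_k(-j-t_0,\cdot-(x_0,y_0))\ge\underline u(-j-t_0,\cdot-(x_0,y_0))=\big[\underline u(\cdot-(t_0,x_0,y_0))\big]\big|_{t=-j}\ge\underline u(-j,\cdot)=v_j(-j,\cdot)$. Comparison gives $v_k(\cdot-(t_0,x_0,y_0))\ge v_j$ on $[-j,+\infty)\times\R^N$; letting $k\to+\infty$ (hence $j\to+\infty$) yields \eqref{cone of monotonicity}. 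When $\min_i\sigma_i>0$ the inequality $\underline u(\cdot-(t_0,x_0,y_0))>\underline u$ is strict, so $U(\cdot-(t_0,x_0,y_0))\not\equiv U$ (iterating the shift would otherwise force $U\equiv1$, contradicting $U<1$), and the strong maximum principle makes \eqref{cone of monotonicity} strict. Finally, monotonicity in $\tau_i$ is immediate: raising one $\tau_i$ lowers $\underline u$ pointwise, hence lowers each $v_k$ by comparison, hence lowers $U$.

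The only genuinely delicate points are the strictness assertions: because $\underline u$ is merely a generalized subsolution, the strong maximum principle must be applied through the smooth solutions $g(\xi_i)$ comprising it, exploiting the transversality of the hyperplanes along $\mathcal{R}$; everything else is supplied by Lemmas~\ref{pro-phi}--\ref{lem comparison of sup and sub} and Corollary~\ref{coro:rhoe}. The slight bookkeeping with the time-levels $-j$ versus $-k$ in the cone-monotonicity step (needed when $t_0>0$) is routine.
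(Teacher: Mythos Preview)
Your proposal is correct and follows essentially the same approach as the paper: construct $U$ as the increasing limit of the Cauchy problems $v_k$ launched from $\underline u(-k,\cdot)$, sandwich it between $\underline u$ and $\overline u_{\varepsilon,\alpha}$ for every admissible $\varepsilon$, and read off the asymptotics from Corollary~\ref{coro:rhoe}; the cone monotonicity and the $\tau_i$-monotonicity are likewise obtained by comparing the initial data. Your explicit bookkeeping with the auxiliary index $j=\lfloor k-t_0\rfloor$ (needed when $t_0>0$) is in fact a bit more careful than the paper's terse ``by the comparison principle, $u_n(t-t_0,x-x_0,y-y_0)\ge u_n(t,x,y)$'', and your iteration argument for strictness is a clean alternative to the paper's appeal to the asymptotic behavior of $U$.
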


\begin{proof}
Let $u_n$ be the solution of \eqref{RDxy} for $t\ge -n$ with initial data
$$u_n(-n,x,y)=\underline{u}(-n,x,y), \hbox{ for $(x,y)\in\R^N$}.$$
By Lemmas~\ref{lem supersolution}-\ref{lem comparison of sup and sub} and the comparison principle, 
\be\label{entire}
\underline{u}(t,x,y)\le u_n(t,x,y)\le \overline{u}_{\varepsilon,\alpha}(t,x,y), \hbox{ for $t>-n$ and $(x,y)\in\R^N$},
\ee
where $\varepsilon>0$, $0<\alpha<\alpha(\varepsilon)$ are small constants satisfying the assumptions in Lemma \ref{lem supersolution}.

Since $\underline{u}$ is a subsolution, the sequence $u_n$ is increasing in $n$. Letting $n\rightarrow +\infty$,
by parabolic estimates, the sequence $u_n$ converges locally uniformly in $C^{1,2}$ to an entire solution $U$ of \eqref{RDxy}. By \eqref{entire} and the strong maximum principle,
\be\label{u-U-u}
\underline{u}<U<\overline{u}_{\varepsilon,\alpha}  \hbox{ in } \R\times \R^N.
\ee
Furthermore, by Corollary~\ref{coro:rhoe}, $U$ satisfies
\[|U(t,x,y)-\underline{u}(t,x,y)|\le (n^2+1)\varepsilon \hbox{ in }\{d((t,x,y),\mathcal{R})\ge \rho(\varepsilon)\}.\]
Since $\varepsilon>0$ is arbitrary, this implies that
\[|U(t,x,y)-\underline{u}(t,x,y)|\rightarrow 0 \hbox{ uniformly as } d((t,x,y),\mathcal{R})\rightarrow +\infty.\]

Since $g^\prime<0$, for any $(t_0,x_0,y_0)\in\R\times\R^N$ satisfying $\min_{i=\{1,\cdots,n\}}\{x_0\cdot \nu_i \cos\theta_i +y_0\sin\theta_i-c_f t_0\}\ge 0$,  we have
\[\underline{u}(t-t_0,x-x_0,y-y_0)=\max_{1\le i\le n}\{g((x-x_0)\cdot \nu_i \cos\theta_i +(y-y_0)\sin\theta_i-c_f (t-t_0) + \tau_i)\ge \underline{u}(t,x,y),\]
where the inequality is strict if $\min_{i=\{1,\cdots,n\}}\{x_0\cdot \nu_i \cos\theta_i +y_0\sin\theta_i-c_f t_0\}> 0$. Then by the comparison principle, $u_n(t-t_0,x-x_0,y-y_0)\ge u_n(t,x,y)$. Letting $n\rightarrow +\infty$, we also get $U(t-t_0,x-x_0,y-y_0)\ge U(t,x,y)$. By the asymptotic behavior of $U$ and using the comparison principle once again, we deduce that $U(t-t_0,x-x_0,y-y_0)> U(t,x,y)$ if $\min_{i=\{1,\cdots,n\}}\{x_0\cdot \nu_i \cos\theta_i +y_0\sin\theta_i-c_f t_0\}> 0$.  

The monotonicity of $U_{\nu_i,\theta_i,\tau_i}$ with respect to $\tau_i$ can be got in the same way.
\end{proof}

\subsection{Uniqueness}
Let $U$ denote $U_{\nu_i,\theta_i,\tau_i}$ in Proposition~\ref{Proposition1} for short.  It is unique in the following sense.

\begin{proposition}\label{uniqueness}
If  $v$  is an entire solution of \eqref{RDxy} satisfying 
\be\label{asy-v}
|v(t,x,y)-\underline{u}(t,x,y)|\rightarrow 0 \hbox{ uniformly as $d((t,x,y),\mathcal{R})\rightarrow +\infty$},
\ee
then $v\equiv U$ in $\R\times\R^N$.
\end{proposition}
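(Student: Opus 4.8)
The plan is to show that any entire solution $v$ with the prescribed asymptotics is squeezed between time-shifts of $U$, and then force equality by a sliding argument. First I would record the basic comparison input: the sub/supersolution pair $\underline{u}$ and $\overline{u}_{\varepsilon,\alpha}$ from Lemmas~\ref{lem supersolution}--\ref{lem comparison of sup and sub}, together with Corollary~\ref{coro:rhoe}, control the behaviour of \emph{both} $v$ and $U$ far from $\mathcal{R}$; in particular both solutions lie strictly between $\underline{u}$ and $1$, and both converge uniformly to $\underline{u}$ as $d((t,x,y),\mathcal{R})\to+\infty$. The standard mechanism in this circle of ideas (V-shaped and pyramidal fronts) is to compare $v$ with $U$ \emph{shifted in the propagation direction} $e_0=(0,\dots,1)$, i.e. with $U(t,x,y-s)$ for $s\ge 0$, exploiting the monotonicity $e_0\cdot\nabla U<0$ (property~1 of Proposition~\ref{Proposition1}, applied with $(t_0,x_0,y_0)=(0,0,-s)$, which indeed satisfies $\min_i\{\cdot\}\ge0$ since each $\sin\theta_i>0$).

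The key steps, in order. \textbf{Step 1 (a uniform gap estimate on a neighbourhood of $\mathcal{R}$).} Fix $\varepsilon>0$. By Corollary~\ref{coro:rhoe} applied to both $v$ and $U$, there is $\rho(\varepsilon)$ so that $|v-\underline{u}|$ and $|U-\underline{u}|$ are $\le(n^2+1)\varepsilon$ outside $\{d((t,x,y),\mathcal{R})\le\rho(\varepsilon)\}$. Inside that tube, I would use the fact that $-g'\ge k>0$ on $[-R,R]$ together with the strict inequality $\underline{u}<U$ and the exponential approach of $\Sigma$ to $\partial\mathcal{P}$ to get a genuine positive lower bound: there is $\delta(\varepsilon)>0$ with $U(t,x,y)\ge \underline{u}(t,x,y-\delta)$... more precisely, shifting $U$ down by a small amount stays below the unshifted $U$. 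The cleanest route is: since $U$ converges to $\underline{u}$ at the right places and $U>\underline{u}$ strictly, for each $\varepsilon$ there is $s_0=s_0(\varepsilon)>0$ such that $U(t,x,y-s_0)\le v(t,x,y)+\varepsilon'$ for a suitable small $\varepsilon'\to0$ — this is where the tube estimate and monotonicity combine. \textbf{Step 2 (sliding from above).} Define $s^*=\inf\{s\ge 0 : U(t,x,y-s)\ge v(t,x,y)\text{ for all }(t,x,y)\}$. Using Step~1 and the asymptotics of both solutions, $s^*$ is finite and the infimum is attained (the inequality passes to the limit). \textbf{Step 3 ($s^*=0$).} Suppose $s^*>0$. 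Then $w_{s^*}:=U(t,x,y-s^*)-v(t,x,y)\ge 0$ solves a linear parabolic equation; by the strong maximum principle either $w_{s^*}\equiv0$ (impossible if $s^*>0$, since $U$ is strictly monotone in $y$ and $v$ is a fixed function, unless one checks a genuine contradiction with the asymptotics) or $w_{s^*}>0$ everywhere. In the latter case one shows $w_{s^*}$ is bounded below by a positive constant on the region $\{d((t,x,y),\mathcal{R})\le \rho\}$ for each $\rho$, while far from $\mathcal{R}$ both $U(t,x,y-s^*)$ and $v$ are trapped near $\underline{u}$ and the strict monotonicity of $g$ again yields a uniform positive gap there (shifting $\underline{u}$ in $y$ moves each $g$ by a definite amount on the transition layer). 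Hence $\inf w_{s^*}>0$, and by continuity $U(t,x,y-(s^*-\eta))\ge v$ still holds for small $\eta>0$, contradicting minimality. Therefore $s^*=0$, i.e. $U\ge v$. \textbf{Step 4 (the reverse inequality).} Symmetrically, sliding $v$ over $U$ — or equivalently running the same argument with the roles exchanged, which is legitimate because the asymptotic hypothesis \eqref{asy-v} is symmetric in $v$ and $U$ — gives $v\ge U$. Combining, $v\equiv U$.

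The main obstacle I expect is \textbf{Step 1 / the tail estimate in Step 3}: making the ``uniform positive gap far from $\mathcal{R}$'' rigorous. Away from $\mathcal{R}$ the interface of $\underline{u}$ is (a subset of) a single moving hyperplane $\{x\cdot\nu_i\cos\theta_i+y\sin\theta_i-c_ft+\tau_i=0\}$ with $\sin\theta_i>0$, so shifting $y\mapsto y-s^*$ shifts this planar front's argument by $s^*\sin\theta_i>0$, and $g$ being strictly decreasing with $-g'\ge k$ on its transition layer gives the gap — but one must handle the region near $\mathcal{R}$ (where several hyperplanes meet and $\underline{u}$ is a genuine max), and ensure the convergence $|v-\underline{u}|,|U-\underline{u}|\to0$ is strong enough (it is, being uniform) that the planar-front gap is not swamped by the error terms. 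A secondary subtlety is that $U$ depends on the auxiliary parameters $\varepsilon,\alpha$ only through the supersolution barrier, not intrinsically, so one should phrase everything in terms of $U$ and $\underline{u}$ directly (as in the statement) and invoke $\overline{u}_{\varepsilon,\alpha}$ only to get $v<1$ and the decay rate. Once the gap estimate is in hand, the sliding argument is routine and mirrors the uniqueness proofs for V-shaped and pyramidal fronts in \cite{HMR,T1,KT}.
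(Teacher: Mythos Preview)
Your overall strategy—sliding a shift of $U$ down onto $v$ and proving the optimal shift is zero—is exactly the paper's approach; the only cosmetic difference is that you slide in the $y$-direction while the paper slides in $t$ (comparing $U(t+\tau,x,y)$ with $v(t,x,y)$), which is immaterial since $U$ is strictly monotone in both. The reverse inequality in your Step~4 is also handled the same way, by symmetry of the hypotheses on $U$ and $v$.

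There is, however, a genuine gap in Step~3, and you have misidentified where the difficulty lies. You assert that $w_{s^*}=U(\cdot-s^*)-v$ is bounded below by a positive constant on each tube $\{d((t,x,y),\mathcal{R})\le\rho\}$, but this tube is \emph{unbounded} (the ridge set $\mathcal{R}$ is unbounded in space–time), so strict positivity of $w_{s^*}$ gives no positive infimum there for free. The part \emph{far} from $\mathcal{R}$, which you flag as the obstacle, is in fact the easy part: on the transition layer $\omega=\{-R\le\min_i\xi_i\le R\}$ and away from $\mathcal{R}$, both solutions are uniformly close to $\underline{u}$ and the strict monotonicity of $g$ gives a gap of order $s^*\min_i\sin\theta_i\cdot k>0$, exactly as you say. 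The hard case is a minimizing sequence $(t_k,x_k,y_k)\in\omega$ with $w_{s^*}(t_k,x_k,y_k)\to0$ that stays at bounded distance from $\mathcal{R}$. The paper resolves this by a geometric construction: from each such $(t_k,x_k,y_k)$ one builds a point $(t_k-1,x_k',y_k')$ at \emph{bounded} spatial distance from $(t_k,x_k,y_k)$ which still lies in $\omega$ but is \emph{arbitrarily far} from $\mathcal{R}$ (shift by $r_1$ in $y$, then slide back along one facet normal $e_i$). Linear parabolic (Harnack-type) estimates then force $w_{s^*}(t_k-1,x_k',y_k')\to0$ as well, contradicting the gap just established far from $\mathcal{R}$. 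Your sketch contains no mechanism of this kind. A secondary omission is the extension of the inequality from $\omega$ to $\omega^{\pm}$: once $U(\cdot+\tau)\ge v$ on $\omega$, one still needs the $\varepsilon_*$-argument (using $f'(0),f'(1)<0$ and the transition-front bounds $v\le\sigma$ in $\omega^-$, $v\ge1-\sigma$ in $\omega^+$) to push the inequality to all of $\R\times\R^N$; your plan jumps over this.
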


\begin{proof}
Since $v$ satisfies \eqref{asy-v}, it is a transition front connecting $0$ and $1$ with sets $\Gamma_t$, $\Omega_t^{\pm}$ defined by \eqref{definition of Gamma} and \eqref{definition of Omega}. Hence there exists an $R>0$ such that 
$$0<v(t,x,y)\le \sigma \hbox{ for $(t,x,y)\in \omega^-$} \hbox{ and }1-\sigma\le v(t,x,y)<1 \hbox{ for $(t,x,y)\in \omega^+$},$$
where
\[\omega^{\pm}:=\left\{(t,x,y)\in\R\times\R^N; \min_{1\le i\le n} \{x\cdot \nu_i \cos\theta_i +y\sin\theta_i -c_f t +\tau_i\}\le -R, (\text{or}\ \ge R)\right\}.\]

Let
\[\omega:=\left\{(t,x,y)\in\R\times\R^N; -R\le \min_{1\le i\le n} \{x\cdot \nu_i \cos\theta_i +y\sin\theta_i -c_f t +\tau_i\}\le R\right\}.\]

Now we compare $v$ and $U$ by the sliding method. This is almost the same with the proof of \cite[Theorem~1.12]{BH2}, so we only  sketch  the proof.

{\bf Step 1. For $\tau$ large enough,  $ U(t+\tau,x,y)\ge v(t,x,y) \hbox{ in } \R\times\R^N$.}

Since $U$ is a transition front connecting $0$ and $1$ with sets $\Gamma_t$ and $\Omega_t^{\pm}$, one has that
\begin{itemize}
    \item $U(t+\tau,x,y)\rightarrow 1$ uniformly as $\tau\rightarrow +\infty$ for $(t,x,y)\in\omega$;
    \item $U(t+\tau,x,y)\rightarrow 0$ uniformly as $\tau\rightarrow -\infty$ for $(t,x,y)\in\omega$.
\end{itemize} 
This implies that 
\[U(t+\tau,x,y)\ge v(t,x,y) \hbox{ in $\omega$ for large $\tau$}.\]
In particular, $U(t+\tau,x,y)\ge v(t,x,y)$ on $\partial \omega^{\pm}$ for large $\tau$.

We next show that $U(t+\tau,x,y)\ge v(t,x,y)$ in $\omega^{\pm}$ for large $\tau$. Define
\[\varepsilon_\ast=\inf\left\{\varepsilon>0; U(t+\tau,x,y)\ge v(t,x,y)-\varepsilon \hbox{ in } \omega^-\right\}.\]
Since $0<U,\ v<1$, $\varepsilon_\ast$ is well-defined. In fact, by the definition of $\omega^\pm$,  $\varepsilon_\ast\le \sigma$. Assume  $\varepsilon_\ast>0$. Then
\[U(t+\tau,x,y)>v(t,x,y)-\varepsilon_\ast \hbox{ on } \partial\omega^-.\]
Since $v\le \sigma$ in $\omega^-$ and $f$ is decreasing in $(-\infty,2\sigma]$, 
\[\partial_t(v-\varepsilon_\ast) -\Delta (v-\varepsilon_\ast)\le f(v-\varepsilon_\ast).\]
Let $z(t,x,y):=U(t+\tau,x,y)-v(t,x,y)+\varepsilon_\ast$. It is a nonegative function and satisfies $\partial_tz-\Delta z\ge bz$ in $\omega^-$ for some bounded function $b$. By the definition of $\varepsilon_\ast$, there exists a sequence $\{(t_n,x_n,y_n)\}_{n\in\mathbb{N}}$ in $\omega^-$ such that 
\be\label{2.4.1}
U(t_n+\tau,x_n,y_n)-v(t_n,x_n,y_n)+\varepsilon_\ast\rightarrow 0 \hbox{ as } n\rightarrow +\infty.
\ee
Notice that $d((t_n,x_n,y_n),\partial\omega^-)$ must be bounded. This is because otherwise we would have $U(t_n+\tau,x_n,y_n)\rightarrow 0$ and $v(t_n,x_n,y_n)\rightarrow 0$, and \eqref{2.4.1} can not be satisfied. 

Then there exists a sequence $\{x_n',y_n'\}$ such that $(t_n-1,x_n^\prime,y_n^\prime)\in\partial\omega^-$ and $|(x_n,y_n)-(x_n',y_n')|<+\infty$. By linear parabolic estimates, $z(t_n,x_n,y_n)\rightarrow 0$ implies $z(t_n-1,x_n^\prime,y_n^\prime)\rightarrow 0$. This is impossible since $U(t+\tau,x,y)\ge v(t,x,y)$ on $\partial\omega^-$ and $\varepsilon_\ast>0$.

In conclusion, $\varepsilon_\ast=0$ and $U(t+\tau,x,y)\ge v(t,x,y)$ in $\omega^-$. Similarly, one can prove that $U(t+\tau,x,y)\ge v(t,x,y)$ in $\omega^+$. Thus $U(t+\tau,x,y)\ge v(t,x,y)$ in $\R\times\R^N$ for all  large $\tau$.

{\bf Step 2.} Define
\[\tau_\ast=\inf\left\{\tau\in\R; U(t+\tau,x,y)\ge v(t,x,y) \hbox{ in } \R\times\R^N\right\}.\]
 Since $U$, $v$ satisfy \eqref{asy-v} and $\underline{u}(t+\tau,x,y)>\underline{u}(t,x,y)$ for $\tau>0$, $\tau_\ast$ is well-defined and $0\le \tau_\ast<+\infty$. We need to show that $\tau_\ast=0$.

Assume by the contrary that $\tau_\ast>0$. Then two cases may occur: either
\be\label{case1}
\inf_{\omega}\left\{U(t+\tau_\ast,x,y)-v(t,x,y)\right\}>0
\ee
or
\be\label{case2}
\inf_{\omega}\left\{U(t+\tau_\ast,x,y)-v(t,x,y)\right\}=0
\ee

In the case of \eqref{case1},  there exists an $\eta_0>0$ such that for any $\eta\in (0,\eta_0]$, $U(t+\tau_\ast-\eta,x,y)-v(t,x,y)\ge 0$ in $\omega$. By the same argument in Step 1, one can prove that $U(t+\tau_\ast-\eta,x,y)-v(t,x,y)\ge 0$ in $\omega^{\pm}$, which contradicts the definition of $\tau_\ast$. 

In the case of \eqref{case2}, there is a sequence $\{(t_k,x_k,y_k)\}_{k\in\mathbb{N}}\in\omega$ such that 
\be\label{4.01}
\lim_{n\rightarrow +\infty}\left[U(t_k+\tau_\ast,x_k,y_k)-v(t_k,x_k,y_k)\right]=0.
\ee
If $d((t_k,x_k,y_k),\mathcal{R})\rightarrow +\infty$, then $U(t_k+\tau_\ast,x_k,y_k)-\underline{u}(t_k+\tau_\ast,x_k,y_k)\rightarrow 0$, $v(t_k,x_k,y_k)- \underline{u}(t_k,x_k,y_k)\rightarrow 0$. But since $\underline{u}(t_k+\tau,x_k,y_k)>\underline{u}(t_k,x_k,y_k)+\alpha(\tau)$ for $\tau>0$, where $\alpha$ is a modulus function, we get a contradiction with \eqref{4.01}. In other words, we must have
\be\label{4.02}
\limsup_{k\to\infty}d((t_k,x_k,y_k),\mathcal{R})<+\infty.
\ee
Assume without loss of generality that $(t_k,x_k,y_k)\in \mathcal{R}$. Take a large $r_1>0$ (to be determined below). Then
\[\min_{1\le i\le n}\{(x_k\cdot \nu_i \cos\theta_i +y_k \sin\theta_i+r_1 \sin\theta_i -c_f t_k\}\ge r_1\min_{1\le i\le n} \sin\theta_i.\]
In the left hand side of this inequality, assume the minima is attained at $i$ and let the minimum be $r_2$. Then $r_2\ge r_1\min_{1\le i\le n} \sin\theta_i$. Let $(x_k^\prime,y_k^\prime)=(x_k,y_k+r_1-c_f/\sin\theta_i) -r_2 (\nu_i\cos\theta_i,\sin\theta_i)$. Then
\[x_k^\prime\cdot \nu_i\cos\theta_i+ y_k^\prime \sin\theta_i -c_f (t_k-1)=0,\]
and for any $j\neq i$,
\[x_k^\prime\cdot \nu_j\cos\theta_j+ y_k^\prime \sin\theta_j -c_f (t_k-1)\ge 
r_1\min_{1\le \ell\le n} \sin\theta_\ell+c_f \left(1-\frac{\sin\theta_j}{\sin\theta_i}\right).\]
If $r_1$ is sufficiently large, these two relations imply that $(t_k-1,x_k^\prime,y_k^\prime)\in \omega$  and $d((t_k-1,x_k^\prime,y_k^\prime),\mathcal{R})$ are  very large. Then  as in the proof of \eqref{4.02}, we deduce that $U(t_k-1+\tau_\ast,x_k^\prime,y_k^\prime)-v(t_k-1,x_k^\prime,y_k^\prime)\ge \alpha(\tau_\ast)$ for some $\alpha(\tau_\ast)>0$. On the other hand, by the definition of $(x_k^\prime,y_k^\prime)$, we also have
\[\limsup_{k\to+\infty}|(x_k,y_k)-(x_k^\prime,y_k^\prime)|<+\infty.\] Then  linear parabolic estimates imply $U(t_k-1+\tau_\ast,x_k^\prime,y_k^\prime)-v(t_k-1,x_k^\prime,y_k^\prime)\rightarrow 0$, which is a contradiction.
\end{proof}

\begin{proposition}\label{pro continuity}
 For fixed $(\nu_i,\theta_i)$, $U_{\nu_i,\theta_i,\tau_i}(t,x,y)$ depend continuously on $(\tau_1,\cdots,\tau_n)\in\R^n$ in the sense of $\mathcal{T}$.
\end{proposition}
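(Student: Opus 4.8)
The plan is to establish continuity of $\tau\mapsto U_{\nu_i,\theta_i,\tau_i}$ by combining the monotonicity in each $\tau_i$ (already proved in Proposition~\ref{Proposition1}) with the uniqueness statement of Proposition~\ref{uniqueness} and standard parabolic compactness. First I would fix a base point $\tau^0=(\tau_1^0,\dots,\tau_n^0)$ and take any sequence $\tau^{(k)}\to\tau^0$ in $\R^n$. Write $U^{(k)}:=U_{\nu_i,\theta_i,\tau_i^{(k)}}$ and $U^0:=U_{\nu_i,\theta_i,\tau_i^{0}}$. The functions $U^{(k)}$ all take values in $(0,1)$ and solve \eqref{RDxy}, so by interior parabolic estimates (Schauder) the family is bounded in $C^{2+\gamma,1+\gamma/2}_{loc}$; hence along a subsequence $U^{(k)}\to U_\infty$ in $C^{2,1}_{loc}$, and $U_\infty$ is an entire solution of \eqref{RDxy}. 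The whole point is then to identify $U_\infty$ with $U^0$; once every subsequential limit is shown to equal $U^0$, the full sequence converges and convergence in the topology $\mathcal{T}$ (uniform convergence of $U$, $\nabla U$, $\nabla^2 U$, $\partial_t U$ on compacts) is exactly what the $C^{2,1}_{loc}$ convergence gives.

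To identify the limit I would invoke the uniqueness criterion of Proposition~\ref{uniqueness}: it suffices to show that $U_\infty$ satisfies the squeezing $\underline{u}^{0}\le U_\infty\le 1$ together with the asymptotics $|U_\infty(t,x,y)-\underline{u}^{0}(t,x,y)|\to0$ uniformly as $d((t,x,y),\mathcal{R}^0)\to+\infty$, where $\underline{u}^{0}$ and $\mathcal{R}^0$ are the subsolution and ridge set associated to $\tau^0$. The lower bound $\underline{u}^{0}\le U_\infty$ and upper bound $\le1$ pass to the limit immediately from $\underline{u}^{(k)}<U^{(k)}<1$ together with $\underline{u}^{(k)}\to\underline{u}^{0}$ locally uniformly. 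The monotonicity in $\tau$ is the tool that makes the uniform asymptotics survive the limit: for any $\delta>0$, squeeze $\tau^{(k)}$ between $\tau^0-\delta\mathbf{1}$ and $\tau^0+\delta\mathbf{1}$ componentwise for $k$ large, so that $U_{\nu_i,\theta_i,\tau_i^0+\delta}\le U^{(k)}\le U_{\nu_i,\theta_i,\tau_i^0-\delta}$ on $\R\times\R^N$; passing to the limit gives $U_{\nu_i,\theta_i,\tau_i^0+\delta}\le U_\infty\le U_{\nu_i,\theta_i,\tau_i^0-\delta}$. Since the upper and lower bounds here are themselves transition fronts with interfaces at Hausdorff distance $O(\delta)$ from $\mathcal{R}^0$ and both are within $O(\delta)$ of $\underline{u}^0$ far from the ridges (by the asymptotics in Proposition~\ref{Proposition1} applied at shifted parameters, using that $g$ is uniformly continuous and $\mathcal{R}^{0\pm\delta}$ is close to $\mathcal{R}^0$), we conclude $\limsup_{d\to\infty}|U_\infty-\underline{u}^0|\le C\delta$; letting $\delta\to0$ gives the required asymptotics. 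Then Proposition~\ref{uniqueness} forces $U_\infty\equiv U^0$.

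The main obstacle is the uniformity in the asymptotic estimate $|U_\infty-\underline{u}^0|\to0$ as $d((t,x,y),\mathcal{R})\to+\infty$ — this does not follow from local convergence alone and must be extracted from the $\delta$-sandwich. The delicate point inside that step is to control how the ridge set $\mathcal{R}$ and the level structure of $\underline{u}$ move under a small perturbation of the $\tau_i$: one needs that the polytope $\mathcal{P}(P_1^{\pm\delta},\dots,P_n^{\pm\delta})$ and its ridges depend continuously (in Hausdorff distance on bounded sets, with uniform control) on $(\tau_1,\dots,\tau_n)$, which is elementary since the hyperplanes $P_i$ translate continuously and the directions $e_i$ are fixed, but it must be phrased carefully so that ``$d((t,x,y),\mathcal{R}^0)$ large'' implies ``$d((t,x,y),\mathcal{R}^{0\pm\delta})$ large'' uniformly. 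Given that geometric bookkeeping, the rest is a routine application of the comparison principle, parabolic regularity, and the uniqueness proposition, and the same argument applied to an arbitrary subsequence yields continuity of the full map $(\tau_1,\dots,\tau_n)\mapsto U_{\nu_i,\theta_i,\tau_i}$ in the topology $\mathcal{T}$.
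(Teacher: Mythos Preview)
Your approach is correct and follows the same overall strategy as the paper: extract a subsequential limit by parabolic compactness, then identify it with $U^0$ via Proposition~\ref{uniqueness}. The only difference is in how the asymptotic hypothesis \eqref{asy-v} is verified for the limit: the paper passes to the limit directly in the pointwise squeezing $\underline{u}_{\tau^{(k)}}\le U^{(k)}\le\overline{u}_{\tau^{(k)},\varepsilon,\alpha}$ (both bounds depend continuously on $\tau$ by their explicit formulas, so $U_\infty$ inherits \eqref{u-U-u} and hence the asymptotics via Corollary~\ref{coro:rhoe}), whereas you sandwich $U^{(k)}$ between $U_{\tau^0\pm\delta}$ via monotonicity and then invoke the asymptotics of these nearby entire solutions. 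Your route avoids re-using the explicit supersolution $\overline{u}$ at the cost of the ridge-set bookkeeping you flag; the paper's route is shorter because continuity of $\overline{u}$ in $\tau$ comes for free from its definition.
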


\begin{proof}
Let us denote $\underline{u}$ and $\overline{u}$ by $\underline{u}_{\tau_i}$ and $\overline{u}_{\tau_i}$ respectively, $\varphi$ in Section~\ref{subsection:phi} by $\varphi_{\tau_i}$. By the continuity of $g$ and the definition of $\underline{u}_{\tau_i}$,  $\underline{u}_{\tau_i}$ is continuous in $\tau_i$. By the definition of $\varphi_{\tau_i}$, $\varphi_{\tau_i}$ depends continuously on $\tau_i$. The continuous dependence of $\overline{u}_{\tau_i}$ on $\tau_i$ follows from its definition and the continuity of $g$. Take any sequence $\tau_i^k$ for every $i\in\{1,\cdots,n\}$ such that $\tau_i^k\rightarrow \tau_i$ as $k\rightarrow +\infty$. By parabolic estimates, $U_{\nu_i,\theta_i,\tau_i^k}$ (up to a subsequence) converges locally uniformly in $C^{1,2}$ to an entire solution $u_{\infty}$ of \eqref{RDxy}. By continuity of $\underline{u}_{\tau_i}$ and $\overline{u}_{\tau_i}$ with respect to $\tau_i$,  $u_{\infty}$ satisfies \eqref{u-U-u}. Then by uniqueness of $U_{\nu_i,\theta_i,\tau_i}$,  $u_{\infty}\equiv U_{\nu_i,\theta_i,\tau_i}$. By monotonicity of $U_{\nu_i,\theta_i,\tau_i}$ with respect to $\tau_i$, $U_{\nu_i,\theta_i,\tau_i^k}$ converge to $U_{\nu_i,\theta_i,\tau_i}$ as $k\rightarrow +\infty$ in the sense of the topology $\mathcal{T}$.
\end{proof}
\vskip 0.3cm

Finally, we show that $U$ is  monotone in $t$.

\begin{lemma}\label{Ut}
In $\R\times\R^N$, $\partial_t U>0$. Moreover, for any $\rho>0$, there exists a $k(\rho)>0$ such that
\[\partial_t U\ge k(\rho) \hbox{ in }  \left\{(t,x,y): d((t,x,y),\partial \mathcal{P})\le \rho\right\}.\]
\end{lemma}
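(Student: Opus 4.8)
The plan is to establish $\partial_t U>0$ pointwise first, and then to upgrade it to the uniform lower bound near $\partial\mathcal P$ by a compactness argument, applying the pointwise statement to the (possibly smaller) mixed fronts arising as translation limits. \textbf{Step 1 (pointwise positivity).} Apply \eqref{cone of monotonicity} of Proposition~\ref{Proposition1} with $x_0=y_0=0$ and $t_0=-s$, $s>0$: the condition $\min_i\{-c_f t_0\}=c_f s>0$ holds, so $U(t+s,x,y)>U(t,x,y)$ for every $s>0$ and every $(t,x,y)\in\R\times\R^N$. In particular $v:=\partial_t U\ge0$; by parabolic regularity $U$ is smooth, and differentiating \eqref{RDxy} in $t$ shows $v$ solves $\partial_t v-\Delta v=f'(U)\,v$, whose zeroth-order coefficient is bounded by $L$. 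If $v(t_0,x_0,y_0)=0$ at some point, the strong maximum principle would force $v\equiv0$ on $(-\infty,t_0]\times\R^N$, so $U$ would be $t$-independent there, contradicting $U(t_0,x,y)>U(t_0-1,x,y)$. Hence $v>0$ everywhere.

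\textbf{Step 2 (uniform bound, away from ridges).} Fix $\rho>0$ and suppose, for contradiction, there are points $z_k=(t_k,x_k,y_k)$ with $d(z_k,\partial\mathcal P)\le\rho$ and $\partial_t U(z_k)\to0$. Assume first, along a subsequence, that $d(z_k,\mathcal R)\to+\infty$. Then for $k$ large the nearest point of $\partial\mathcal P$ to $z_k$ lies in the interior of a single facet, which after a further subsequence is $\widetilde P_i$ for a fixed $i$; hence $|x_k\cdot\nu_i\cos\theta_i+y_k\sin\theta_i-c_f t_k+\tau_i|\le\rho\sqrt{1+c_f^2}$ while $x_k\cdot\nu_j\cos\theta_j+y_k\sin\theta_j-c_f t_k+\tau_j\to+\infty$ for $j\ne i$. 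Translating by $z_k$, interior parabolic estimates give (along a subsequence) $U(\cdot+z_k)\to U^\infty$ in $C^{1,2}_{loc}$, with $U^\infty$ an entire solution of \eqref{RDxy}, while $\underline u(\cdot+z_k)\to g(\zeta)$ and, by Lemma~\ref{lem comparison of sup and sub}(iii), $\overline u(\cdot+z_k)\to g(\zeta)$, where $\zeta(t,x,y)=x\cdot\nu_i\cos\theta_i+y\sin\theta_i-c_f t+c$ with $|c|\le\rho\sqrt{1+c_f^2}$. By the squeeze \eqref{u-U-u}, $U^\infty\equiv g(\zeta)$, so $\partial_t U^\infty(0,0,0)=-c_f\,g'(c)\ge c_f\min_{|s|\le\rho\sqrt{1+c_f^2}}\big(-g'(s)\big)>0$, contradicting $\partial_t U^\infty(0,0,0)=\lim_k\partial_t U(z_k)=0$.

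\textbf{Step 3 (uniform bound, near a ridge).} Now assume instead that $d(z_k,\mathcal R)$ stays bounded. Pick $\zeta_k=(s_k,p_k,w_k)\in\mathcal R$ with $|z_k-\zeta_k|$ bounded; after a subsequence $\zeta_k$ lies on a fixed ridge $\mathcal R_{ij}$ and $z_k-\zeta_k\to(a,b,c)$. With $\tau_\ell^k:=\tau_\ell+p_k\cdot\nu_\ell\cos\theta_\ell+w_k\sin\theta_\ell-c_f s_k$, the inclusion $\zeta_k\in\mathcal R_{ij}\subset\partial\mathcal P$ forces $\tau_i^k=\tau_j^k=0$ and $\tau_\ell^k\ge0$; passing to a further subsequence, $\tau_\ell^k\to\tau_\ell^\infty\in[0,+\infty]$, and on $I:=\{\ell:\tau_\ell^\infty<+\infty\}\supseteq\{i,j\}$ one gets $\underline u(\cdot+\zeta_k)\to\underline u^\infty:=\max_{\ell\in I}g(x\cdot\nu_\ell\cos\theta_\ell+y\sin\theta_\ell-c_f t+\tau_\ell^\infty)$ locally uniformly, while $U(\cdot+\zeta_k)\to U^\infty$ in $C^{1,2}_{loc}$. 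The facets with $\ell\in I$ converge and those with $\ell\notin I$ recede from every compact set, so the translated form of the estimate in Corollary~\ref{coro:rhoe} passes to the limit by a diagonal argument: $|U^\infty-\underline u^\infty|\to0$ as $d(\cdot,\mathcal R^\infty)\to+\infty$, with $\mathcal R^\infty$ the ridge set of the configuration $\{(\nu_\ell,\theta_\ell,\tau_\ell^\infty):\ell\in I\}$. That configuration satisfies the hypotheses of Proposition~\ref{Proposition1} with the same $e_0$, so Proposition~\ref{uniqueness} identifies $U^\infty$ with the entire solution it produces, and Step 1 applies to the latter: $\partial_t U^\infty>0$ everywhere, in particular $\partial_t U^\infty(a,b,c)>0$, contradicting $\partial_t U^\infty(a,b,c)=\lim_k\partial_t U(z_k)=0$. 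Both cases being excluded, the infimum of $\partial_t U$ over $\{d(\cdot,\partial\mathcal P)\le\rho\}$ is positive, and any positive number below it serves as $k(\rho)$.

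\textbf{Main obstacle.} Steps 1 and 2 are routine (strong maximum principle, interior parabolic estimates, a squeeze to a planar front). The real work is Step 3: one must check that translating along a ridge produces, in the limit and in the appropriate local topology, a mixed-front configuration of the same kind — with some planar fronts allowed to escape to infinity — and that the \emph{uniform} asymptotic estimate of Corollary~\ref{coro:rhoe} survives the limit, so that the uniqueness statement of Proposition~\ref{uniqueness} can be invoked to pin down $U^\infty$.
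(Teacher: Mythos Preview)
Your proof is correct, but it takes a longer route than the paper's. For Step~1 the arguments are essentially the same. For the uniform bound, you split into two cases (away from ridges vs.\ near a ridge) and in each case you \emph{identify} the translation limit $U^\infty$ explicitly: in Step~2 as a planar front via the squeeze \eqref{u-U-u}, and in Step~3 as a mixed front with fewer facets via Proposition~\ref{uniqueness}. This works, and you correctly flag the main technical point (that the uniform asymptotic survives the limit when some facets escape).

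The paper avoids all of this identification. It argues directly: since $d(z_k,\partial\mathcal P)\le\rho$, the transition-front property of $U$ gives, \emph{uniformly in $k$}, that $U(t+t_k,x_k,y_k)\to 1$ as $t\to+\infty$ and $\to 0$ as $t\to-\infty$. These uniform limits pass to $u_\infty$, so $u_\infty(\cdot,0,0)$ runs from $0$ to $1$ and in particular $0<u_\infty<1$. But $\partial_t u_\infty(0,0,0)=0$ together with $\partial_t u_\infty\ge 0$ and the strong maximum principle force $\partial_t u_\infty\equiv 0$ on $\{t\le 0\}$, hence $u_\infty(0,0,0)=\lim_{t\to-\infty}u_\infty(t,0,0)=0$, a contradiction. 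No case distinction, no identification of $u_\infty$, no appeal to Proposition~\ref{uniqueness}. Your approach has the advantage of yielding more information about the limit (useful elsewhere, e.g.\ in the stability proof), while the paper's is shorter and uses only the qualitative transition-front structure.
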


\begin{proof}
One can easily check that $U(t,x,y)$ is a transition front connecting $0$ and $1$ with sets
$\Gamma_t$
and
$\Omega_t^{\pm}$
for every $t\in\R$. Then by \cite{GH}, one knows that $\partial_t U>0$ in $\R\times \R^N$.

The monotonicity of $U$ in $t$ can also be seen from its construction. In the proof of Proposition~\ref{Proposition1}, by noticing that $\underline{u}$ is increasing in $t$,  the maximum principle implies  that $\partial_tu_n(t,x,y)>0$ for all $t>-n$ and $(x,y)\in\R^N$. By first letting $n\rightarrow +\infty$ and then applying the strong maximum principle to the linear parabolic equation $\partial_t \partial_tU-\Delta \partial_tU =f^\prime(U)\partial_tU$, one concludes that $\partial_tU>0$ in $\R\times\R^N$.

Assume that there is a constant $\rho>0$ and a sequence $\{(t_k,x_k,y_k)\}_{k\in\mathbb{N}}$ of $\R\times\R^N$ such that 
\be\label{utn}
d((t_k,x_k,y_k),\partial\mathcal{P})\le \rho \hbox{ and } \partial_t U(t_k,x_k,y_k)\rightarrow 0 \hbox{ as $k\rightarrow +\infty$}. 
\ee
That is, $\min_{1\le i\le n} \{x_k\cdot \nu_i \cos\theta_i +y_k\sin\theta_i -c_f t_k +\tau_i\}\le \rho$. Let $u_k(t,x,y)=U(t+t_k,x+x_k,y+y_k)$. Then  one has  
\begin{equation}\label{3.1}
u_k(t,0,0)\rightarrow 1,\ 0 \hbox{ as $t\rightarrow +\infty$ and $-\infty$ respectively},
\end{equation}
for all $k\in\mathbb{N}$. By standard parabolic estimates, $u_k(t,x,y)$ converge, up to  a subsequence, locally uniformly to a  solution $u_{\infty}(t,x,y)$ of \eqref{RDxy}. By \eqref{utn}, we have
\[\partial_tu_{\infty}(0,0,0)=0.\]
Because  $\partial_tu_{\infty}\ge 0$ in $\R\times\R^N$, applying the strong maximum principle to $\partial_tu_{\infty}$ shows that $\partial_tu_{\infty}\equiv 0$ for $t\le 0$. But there is impossible by \eqref{3.1}
and the fact that $0< u_{\infty}(0,0,0)<1$ for $(t,x,y)\in\R\times\R^N$.
\end{proof}

\section{Stability of the entire solution}

In this section, we prove Theorem~\ref{Th2}, that is, stability of the entire solution. 

We still consider equation~\eqref{RDxy} with $e_0=(0,\cdots,1)$. Let $U$ denote $U_{\nu_i,\theta_i,\tau_i}$ in Proposition~\ref{Proposition1} for short. We assume that $u$ is a solution to the Cauchy problem of \eqref{RDxy},
\begin{eqnarray}\label{CP}
\left\{\begin{array}{lll}
\partial_tu-\Delta_x u-u_{yy}=f(u), && t>0,\ (x,y)\in\R^N,\\
u(0,x,y)=u_0(x,y), && (x,y)\in\R^N.
\end{array}
\right.
\end{eqnarray}
The initial value $u_0$ satisfies $0\le u_0(x,y)\le 1$ and
\be\label{initial}
|u_0(x,y)-\underline{u}(0,x,y)|\rightarrow 0, \hbox{uniformly as $d((x,y),\mathcal{R}_0)\rightarrow +\infty$},
\ee
where $\mathcal{R}_0$ is time slice of the ridges  $\mathcal{R}$ at $t=0$.

We first prove a technical tool which will be used later on and also in Section~5.  In the following we fix a $\delta\in (0,\sigma]$, where $\sigma$ is given by \eqref{sigma}.

\begin{lemma}\label{supsubsolution}
Assume that $u^\ast\ge 0$ is a $C^{1,2}(\R\times\R^N)$ function satisfying
\begin{itemize}
\item[(i)]  $k:=\inf_{(t,x)\in \{\delta\le u^\ast\le 1-\delta\}}\partial_tu^\ast(t,x)>0$;

\item[(ii)] in $\{u^\ast\le \delta\}$ or $\{u^\ast\ge 1-\delta\}$, $\partial_tu^\ast\ge -\frac{\mu k}{\mu +L}$;

\item[(iii)] in $\{0\le u^\ast<1\}$,
\be\label{super-ine}
\partial_tu^\ast-\Delta u^\ast -f(u^\ast)\ge 0,
\ee
\end{itemize}
Then there exists a $\omega>0$ such that
\[u^+(t,x):=\min\left\{u^\ast(t+\omega\delta(1-e^{-\mu t}),x)+\delta e^{-\mu t},1\right\} \]
is a supersolution  of \eqref{RD} in $\R^+\times\R^N$.

Similarly, if  $u_\ast\le 1$ is a $C^{1,2}(\R\times\R^N)$ function  satisfying (i), (ii) and
\begin{itemize}
\item[(iii)'] in $\{0< u^\ast\le 1\}$,
\be\label{sub-ine}
\partial_tu_{\ast}-\Delta u_\ast -f(u_\ast)\le 0.
\ee
\end{itemize}
Then there exists a $\omega>0$ such that
\[ u^-(t,x):=\max\left\{u_\ast(t-\omega\delta(1-e^{-\mu t}),x)-\delta e^{-\mu t},0\right\}\]
is a subsolution of \eqref{RD} in $\R^+\times\R^N$.
\end{lemma}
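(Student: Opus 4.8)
The plan is to verify directly that $u^+$ satisfies the differential inequality $\partial_t u^+ - \Delta u^+ - f(u^+) \ge 0$ wherever $u^+ < 1$, the truncation at $1$ being harmless for supersolutions. Write $s(t) = t + \omega\delta(1-e^{-\mu t})$, so that $s'(t) = 1 + \omega\delta\mu e^{-\mu t} \ge 1$, and set $p(t) = \delta e^{-\mu t}$, so $p'(t) = -\mu\delta e^{-\mu t} = -\mu p(t)$. With $v := u^\ast(s(t),x)$ we compute
\[
\partial_t u^+ - \Delta u^+ - f(u^+) = s'(t)\,\partial_t u^\ast(s(t),x) - \Delta u^\ast(s(t),x) - \mu p(t) - f(v + p(t)).
\]
Using hypothesis (iii) for $u^\ast$ at the point $(s(t),x)$, namely $\partial_t u^\ast - \Delta u^\ast \ge f(u^\ast) = f(v)$, the right-hand side is bounded below by
\[
(s'(t)-1)\,\partial_t u^\ast(s(t),x) + f(v) - f(v+p(t)) - \mu p(t)
= \omega\delta\mu e^{-\mu t}\,\partial_t u^\ast(s(t),x) + \big(f(v) - f(v+p(t)) - \mu p(t)\big).
\]

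The argument then splits according to the value of $v = u^\ast(s(t),x)$, exactly mirroring the three-region analysis used for the supersolution $\overline u$ in Lemma~\ref{lem supersolution}. First, on the ``interior'' region $\{\delta \le v \le 1-\delta\}$, hypothesis (i) gives $\partial_t u^\ast(s(t),x) \ge k > 0$, and since $|f(v) - f(v+p(t))| \le L p(t)$, the lower bound becomes $\ge (\omega\delta\mu e^{-\mu t} k - (L+\mu)p(t)) = p(t)(\omega\mu k - (L+\mu))$, which is nonnegative once $\omega \ge (L+\mu)/(\mu k)$. Second, on $\{v \le \delta\}$: here $v + p(t) \le 2\delta \le 2\sigma \le 4\sigma$, so by \eqref{sigma} $f$ is concave-ish with $f'(\cdot) \le f'(0)/2 \le -2\mu$ on this range (using $\mu \le -f'(0)/4$), whence $f(v) - f(v+p(t)) \ge 2\mu\, p(t) \ge \mu\, p(t)$; combining with hypothesis (ii), $\partial_t u^\ast \ge -\mu k/(\mu+L)$, gives lower bound $\ge -\omega\delta\mu e^{-\mu t}\frac{\mu k}{\mu+L} + \mu p(t) - \mu p(t) \cdot(\text{something}) $ — more carefully, $\omega\delta\mu e^{-\mu t}\partial_t u^\ast \ge -\omega p(t)\mu \cdot \frac{\mu k}{\mu+L}$ and the reaction term contributes $\ge (2\mu - \mu)p(t) = \mu p(t) \ge \mu p(t)$, wait — I should write the bound as $\ge p(t)\big(\mu - \omega\mu^2 k/(\mu+L)\big)$, hmm, this needs $\omega$ small, contradicting the interior requirement. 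The resolution (and the genuine content of the lemma) is that in region $\{v \le \delta\}$ one does \emph{not} need the gain from $s'(t)-1$: one instead keeps the original $\partial_t u^\ast$ term without dropping $f(v)-f(v+p)$, i.e. bound $\ge (s'(t)-1)\partial_t u^\ast + (2\mu-\mu)p(t) = (s'(t)-1)\partial_t u^\ast + \mu p(t)$, and since $s'(t)-1 = \omega\delta\mu e^{-\mu t} = \omega\mu p(t)$, this is $\ge \omega\mu p(t)\cdot(-\mu k/(\mu+L)) + \mu p(t) = \mu p(t)\big(1 - \omega\mu k/(\mu+L)\big)$, which forces $\omega \le (\mu+L)/(\mu^2 k)$. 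Thus $\omega$ must be chosen in the window $[(L+\mu)/(\mu k),\ (\mu+L)/(\mu^2 k)]$, which is nonempty precisely because $\mu \le 1$; this compatibility is the crux. The region $\{v \ge 1-\delta\}$ is symmetric, using $f'(\cdot) \le f'(1)/2$ near $1$.

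Concretely I would organize the writeup as: (1) record $s',p'$ and the algebraic identity for $\partial_t u^+ - \Delta u^+ - f(u^+)$; (2) apply (iii) to pass to the displayed lower bound; (3) fix $\omega$ in the admissible window and check the three regions $\{v\ge 1-\delta\}$, $\{\delta \le v \le 1-\delta\}$, $\{v \le \delta\}$ separately, invoking (i), (ii), \eqref{sigma}, and $L = \max|f'|$; (4) note the truncation $\min\{\cdot,1\}$ preserves the supersolution property since the minimum of two supersolutions is a supersolution. The subsolution statement for $u^-$ follows by the substitution $u \mapsto 1-u$, $f(u) \mapsto -f(1-u)$ (which is again bistable of the same type), or equivalently by repeating the computation with reversed signs and the time-shift $t - \omega\delta(1-e^{-\mu t})$ so that $s'(t) \le 1$.

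The main obstacle, as indicated above, is the bookkeeping that pins $\omega$ into a two-sided window: the interior region wants $\omega$ large (to dominate $f$-Lipschitz fluctuations via the strictly positive $\partial_t u^\ast$), whereas the tail regions $\{v \le \delta\}$, $\{v \ge 1-\delta\}$ want $\omega$ not too large (so that the possibly-negative $\partial_t u^\ast$ there, controlled only by (ii), does not overwhelm the reaction gain coming from the sign of $f'$ near $0$ and $1$). Verifying that the constant $\mu = \min\{1, -f'(0)/4, -f'(1)/4\}$ makes this window nonempty — and hence that a single $\omega$ works in all three regions simultaneously — is the one place where the specific choices of $\mu$, $\sigma$, and $L$ in the Notations are used, and where a careless estimate would break the proof.
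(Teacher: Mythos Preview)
Your approach is the same as the paper's: compute $\partial_t u^+ - \Delta u^+ - f(u^+)$ where $u^+<1$, use (iii) to reduce to the lower bound $\omega\delta\mu e^{-\mu t}\,\partial_t u^\ast + f(v)-f(v+p)-\mu p$, and split into three regions according to the value of $v=u^\ast(s(t),x)$.

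There is, however, an arithmetic slip in your tail-region bookkeeping. In $\{v\le\delta\}$ your own computation correctly gives the lower bound $\mu p(t)\big(1-\omega\mu k/(\mu+L)\big)$, which forces $\omega\le(\mu+L)/(\mu k)$, not $(\mu+L)/(\mu^2 k)$ as you then write. Combined with the interior constraint $\omega\ge(\mu+L)/(\mu k)$, the ``window'' collapses to the single point $\omega=(\mu+L)/(\mu k)$, and the condition $\mu\le 1$ plays no role. This is exactly what the paper does: it sets $\omega=(\mu+L)/(\mu k)$ at the outset and checks each region directly. Hypothesis (ii) is calibrated precisely so that this value of $\omega$ works in the tails; it is not that $\mu\le 1$ creates slack between two competing constraints.

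With that correction, the proposal is correct and matches the paper, including the truncation argument and the symmetric treatment of $u^-$.
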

\begin{proof}
Let 
\[\omega=\frac{\mu +L}{\mu k}.\]

We first show the supersolution property. Since $1$ is a solution of \eqref{RD}, we need only to check \eqref{super-ine} in the open set $\left\{u^+<1\right\}$. Here
\begin{align*}
N:=&\partial_tu^+ -\Delta u^+ -f(u^+)\\
=&\partial_tu^\ast\left(1+\omega\delta \mu e^{-\mu t}\right) -\delta \mu e^{-\mu t} -\Delta u^\ast -f\left(u^+\right)\\
\ge& \omega\delta \mu \partial_tu^\ast e^{-\mu t}  -\delta \mu e^{-\mu t} +f\left(u^\ast\right)-f\left(u^\ast+\delta e^{-\mu t}\right).
\end{align*}
\begin{enumerate}
    \item In $\left\{\delta\le u^\ast\le 1-\delta\right\}$,  by the lower bound of $\partial_tu^\ast$ from (i) and Lagrange mean value theorem,
\[N\ge \omega k \delta \mu e^{-\mu t} - \delta \mu e^{-\mu t} -L \delta e^{-\mu t}\ge 0.\]
\item In $\left\{0\le u^\ast \le \delta\right\}$, by Lagrange mean value theorem and \eqref{sigma},
\[N\ge   -2\delta \mu e^{-\mu t} -\frac{f^\prime(0)}{2} \delta e^{-\mu t}\ge 0.\]
\item In $\left\{1-\delta\le u^\ast<1\right\}$, by Lagrange mean value theorem and \eqref{sigma},
\[ N\ge   -2\delta \mu e^{-\mu t} -\frac{f^\prime(1)}{2} \delta e^{-\mu t}\ge 0.\]
\end{enumerate}

We then show the subsolution property. Since $0$ is a solution of \eqref{RD}, we only need to check \eqref{sub-ine} in the open set $\left\{u^->0\right\}$. Here
\begin{align*}
N:=&\partial_tu^- -\Delta u^- -f\left(u^-\right)\\
=&\partial_tu_\ast\left(1-\omega\delta \mu e^{-\mu t}\right) +\delta \mu e^{-\mu t} -\Delta u_\ast -f\left(u^-\right)\\
\le& -\omega\delta \mu \partial_tu_\ast e^{-\mu t}  +\delta \mu e^{-\mu t} +f\left(u_\ast\right)-f\left(u_\ast-\delta e^{-\mu t}\right).
\end{align*}
\begin{enumerate}
    \item In $\left\{\delta\le u_\ast\le 1-\delta\right\}$, by the lower bound of $\partial_tu_\ast$ from (i) and Lagrange mean value theorem,
\[N\le -\omega k \delta \mu e^{-\mu t} + \delta \mu e^{-\mu t} +L \delta e^{-\mu t}\le 0.\]
\item In $\left\{0< u_\ast \le\delta\right\}$,  by Lagrange mean value theorem and \eqref{sigma},
\[ N\le   2\delta \mu e^{-\mu t} +\frac{f^\prime(0)}{2} \delta e^{-\mu t}\le 0 .\]
\item In $\left\{1-\delta\le u_\ast\le 1\right\}$,  by Lagrange mean value theorem and \eqref{sigma},
\[  N\le   2\delta \mu e^{-\mu t} +\frac{f^\prime(1)}{2} \delta e^{-\mu t}\le 0. \]
\end{enumerate}
\end{proof}

\subsection{A supersolution}

For $t\ge 0$ and $(x,y)\in\R^N$, define
\[u^+(t,x,y):= \min\left\{\overline{u}(t+\omega\delta (1 - e^{-\mu t}),x,y) +\delta e^{-\mu t},1 \right\},\]
where $\omega$ is a constant to be determined, $\overline{u}$ is the supersolution defined in Subsection \ref{subsection:sup-sub}. 

Before we show that $u^+$ is a supersolution, we need the following technical lemma.
\begin{lemma}\label{phi-alphat0}
For any $R>0$, in  $\{(x,y): d((x,y),\mathcal{R}_0)\le R\}$, 
\begin{equation}\label{phi4.1}
    y-\frac{1}{\alpha}\varphi(0,\alpha x)\rightarrow -\infty  ~~\hbox{uniformly  as $\alpha \rightarrow 0^+$}.
\end{equation}
\end{lemma}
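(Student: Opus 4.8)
First, if $\mathcal{R}_0=\emptyset$ there is nothing to prove, so assume $\mathcal{R}_0\neq\emptyset$. The plan is to bound $y-\frac1\alpha\varphi(0,\alpha x)$ from above by a quantity of the form $C_1(R)-c(R)/\alpha$ with $c(R)>0$, which tends to $-\infty$ uniformly as $\alpha\to0^+$. The mechanism producing the factor $1/\alpha$ is that, on a fixed neighbourhood of the ridge set, the flatness measure $h$ stays bounded away from $0$, while by \eqref{varphi-plane}--\eqref{2.2} (whose constants do not depend on $\alpha$) one has $\varphi-\psi\ge h/C$; after the rescaling $x\mapsto\alpha x$ this turns into a lower bound of order $1/\alpha$ for how far $\frac1\alpha\varphi(0,\alpha x)$ sits above the limiting polytope $\mathcal{P}_0$.

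I would first reformulate everything in terms of the un-scaled polytope $\mathcal{P}$ at $t=0$. Writing $x\in\R^{N-1},\ z\in\R$ and $d_i(x,z):=x\cdot\nu_i\cos\theta_i+z\sin\theta_i+\tau_i$ for the signed distance to $P_{i,0}$, and $\psi^\ast(x):=\max_i\big(-x\cdot\nu_i\cot\theta_i-\tau_i/\sin\theta_i\big)$, one checks directly that $\partial\mathcal{P}_0=\{y=\psi^\ast(x)\}$, that $\mathcal{R}_0\subset\{y=\psi^\ast(x)\}$, that $\psi^\ast$ is Lipschitz (constant $\max_i\cot\theta_i$), and that $d_i(x,\psi^\ast(x))\ge0$ (factor out $\sin\theta_i$). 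A one-line computation from the definitions gives the scaling identity $q_i(0,\alpha x,\alpha z)=\alpha\,d_i(x,z)$ (equivalently $\mathcal{Q}_0=\alpha\,\mathcal{P}_0$). Hence, setting $\phi_\alpha(x):=\frac1\alpha\varphi(0,\alpha x)$, we obtain $\widehat{q}_i(0,\alpha x)=\alpha\,d_i(x,\phi_\alpha(x))\ge0$, $\psi(0,\alpha x)=\alpha\psi^\ast(x)$, and $\phi_\alpha(x)-\psi^\ast(x)=\frac1\alpha\big(\varphi(0,\alpha x)-\psi(0,\alpha x)\big)\ge0$. Thus it suffices to show $y-\phi_\alpha(x)\to-\infty$ uniformly on $\{d((x,y),\mathcal{R}_0)\le R\}$.

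Two ingredients remain. (a) \emph{$y$ is close to $\psi^\ast(x)$ and two of the $d_i$'s are bounded.} Given $(x,y)$ with $d((x,y),\mathcal{R}_0)\le R$, pick $(x_\ast,y_\ast)\in\mathcal{R}_0$ with $|(x,y)-(x_\ast,y_\ast)|\le R$; then $y_\ast=\psi^\ast(x_\ast)$ and, since $(x_\ast,y_\ast)$ lies on some ridge $\widetilde{P}_{i,0}\cap\widetilde{P}_{j,0}$, $d_i(x_\ast,y_\ast)=d_j(x_\ast,y_\ast)=0$. Using the Lipschitz bound on $\psi^\ast$ and the $1$-Lipschitzness of $d_i,d_j$ one gets $|y-\psi^\ast(x)|\le C_1(R)$ and $0\le d_i(x,\psi^\ast(x)),\,d_j(x,\psi^\ast(x))\le C_1(R)$, with $C_1(R)$ depending only on $R$ and the $\theta_\ell$. (b) \emph{$h$ is bounded below.} From $\sum_i e^{-\widehat{q}_i(0,\alpha x)}=1$ with all $\widehat{q}_i\ge0$, the largest term is $\ge1/n$, so $\min_i\widehat{q}_i(0,\alpha x)\le\ln n$; combined with $d_i(x,\phi_\alpha(x))=d_i(x,\psi^\ast(x))+\sin\theta_i(\phi_\alpha(x)-\psi^\ast(x))$ and $d_i(x,\psi^\ast(x))\ge0$ this yields the $\alpha$-free bound $\alpha\big(\phi_\alpha(x)-\psi^\ast(x)\big)=\varphi(0,\alpha x)-\psi(0,\alpha x)\le C_0:=\ln n/\min_i\sin\theta_i$. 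Then for $0<\alpha\le1$, using (a) and $d_i(x,\phi_\alpha(x))=d_i(x,\psi^\ast(x))+\sin\theta_i(\phi_\alpha(x)-\psi^\ast(x))$,
\[
\widehat{q}_i(0,\alpha x)+\widehat{q}_j(0,\alpha x)\le \alpha\big(d_i(x,\psi^\ast(x))+d_j(x,\psi^\ast(x))\big)+2\,\alpha\big(\phi_\alpha(x)-\psi^\ast(x)\big)\le 2C_1(R)+2C_0,
\]
and since $i\neq j$ this term appears in the sum defining $h$, whence $h(0,\alpha x)\ge e^{-2C_1(R)-2C_0}=:c(R)>0$ on the whole set $\{d((\cdot,\cdot),\mathcal{R}_0)\le R\}$.

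Finally, by \eqref{varphi-plane}--\eqref{2.2} (constants independent of $\alpha$), $\phi_\alpha(x)-\psi^\ast(x)=\frac1\alpha\big(\varphi(0,\alpha x)-\psi(0,\alpha x)\big)\ge\frac1{C\alpha}h(0,\alpha x)\ge\frac{c(R)}{C\alpha}$, so on $\{d((x,y),\mathcal{R}_0)\le R\}$ and for $0<\alpha\le1$,
\[
y-\tfrac1\alpha\varphi(0,\alpha x)=\big(y-\psi^\ast(x)\big)-\big(\phi_\alpha(x)-\psi^\ast(x)\big)\le C_1(R)-\tfrac{c(R)}{C\alpha}\longrightarrow-\infty
\]
as $\alpha\to0^+$, uniformly in $(x,y)$, which is exactly \eqref{phi4.1}. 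I expect the only genuine obstacle to be ingredient (b): the uniform positivity of $h$ near the ridges, which crucially combines the geometric fact that two of the distances $d_i,d_j$ vanish on a ridge with the $\alpha$-uniform control $\varphi-\psi\le C_0$ coming from $\sum_i e^{-\widehat{q}_i}=1$; the remaining steps are bookkeeping with the scaling identity and the estimates from the first two lemmas of Section~\ref{subsection:phi}.
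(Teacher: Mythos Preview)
Your argument is correct, but it takes a longer and more structural route than the paper's. You pass through the flatness measure $h$: near a ridge two of the $\widehat{q}_k$'s stay bounded (by the $\alpha$--free control $\varphi-\psi\le\ln n/\min_\ell\sin\theta_\ell$ coming from $\sum e^{-\widehat q_\ell}=1$), so $h\ge c(R)>0$, and then the $\alpha$--independent estimate $\varphi-\psi\ge h/C$ from \eqref{varphi-plane}--\eqref{2.2} yields the $1/\alpha$ gap. This is conceptually clean (it isolates the mechanism ``$h$ stays positive near ridges''), but it leans on the earlier lemmas and requires checking that their constants are uniform in $\alpha$. The paper instead works directly from the defining relation $\sum_\ell e^{-q_\ell(0,\alpha x,\varphi(0,\alpha x))}=1$: since $(x,y)$ is within $R$ of a ridge $\mathcal{R}_0^{ij}$, both $q_i$ and $q_j$ at the point $(0,\alpha x,\varphi(0,\alpha x))$ are bounded above by $\alpha R+\alpha\big|y-\tfrac{1}{\alpha}\varphi(0,\alpha x)\big|$, so $1\ge 2e^{-\alpha R-\alpha|y-\varphi/\alpha|}$ and hence $|y-\tfrac{1}{\alpha}\varphi(0,\alpha x)|\ge \tfrac{\ln 2}{\alpha}-R$; the sign is fixed because $\varphi(0,\alpha x)\ge\alpha\psi^\ast(x)$ and $y$ stays within $C_1(R)$ of $\psi^\ast(x)$. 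The paper's route is shorter and entirely self-contained; yours gives more geometric insight at the cost of bookkeeping.
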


\begin{proof}
Because $\mathcal{R}_0=\cup_{i,j}\mathcal{R}^{ij}_0$,  where $\mathcal{R}_0^{ij}=\widetilde{P}_{i,0}\cap\widetilde{P}_{j,0}$, we may assume that  $d((x,y),\mathcal{R}^{ij}_0)\le R$ for some $i,\ j\in\{1,\cdots,n\}$. This implies that 
\[x\cdot \nu_i \cos\theta_i +y\sin\theta_i +\tau_i \le R \hbox{ and } x\cdot \nu_j \cos\theta_j +y\sin\theta_j +\tau_j \le R.\]
Then
\begin{align*}
q_i(0,\alpha x,\varphi(0,\alpha x))=&\alpha x\cdot \nu_i \cos\theta_i +\varphi(0,\alpha x) \sin\theta_i +\alpha \tau_i\\
\le& \alpha R -\alpha \left(y-\frac{1}{\alpha}\varphi(0,\alpha x)\right)\sin\theta_i\\
\le& \alpha R +\alpha \left|y-\frac{1}{\alpha}\varphi(0,\alpha x)\right|.
\end{align*}
Similarly, 
\[q_j(0,\alpha x,\varphi(0,\alpha x))\le \alpha R +\alpha \left|y-\frac{1}{\alpha}\varphi(0,\alpha x)\right|.\]
Since $\sum_{i=1}^n e^{-q_i(0,\alpha x,\varphi(0,\alpha x))}=1$, $n\ge 2$ and $\sin\theta_i>0$, the above two inequalities imply that
\[1\ge 2 e^{-\left(\alpha R+\alpha |y-\frac{1}{\alpha}\varphi(0,\alpha x)|\right)}.\]
Hence
\[\left|y-\frac{1}{\alpha}\varphi(0,\alpha x)\right|\ge \frac{1}{\alpha}\ln 2 -R,\]
which implies that either $\left|y-\frac{1}{\alpha}\varphi(0,\alpha x)\right|\rightarrow +\infty$ or $-\infty$ as $\alpha \rightarrow 0^+$. Since $\varphi(0,\alpha x)>\alpha y$ for $(x,y)$ on $\widetilde{P}_{i,0}$ and $\widetilde{P}_{j,0}$,  only the first possibility, that is,
\eqref{phi4.1} is possible.
\end{proof}
\vskip 0.3cm

Now we show that $u^+$ is a supersolution of \eqref{CP}. 

\begin{lemma}\label{u+}
There exist $\omega>0$ and $\mu>0$ such that for any $\alpha>0$ small enough and $\delta\in (0,\sigma]$, $u^+$ is a supersolution of \eqref{CP}. Furthermore,
\be\label{comparison from above}
u^+(t,x,y)\ge u(t,x,y), \hbox{ for $t\ge 0$ and $(x,y)\in\R^N$}.
\ee
\end{lemma}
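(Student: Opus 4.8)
The plan is to verify the differential inequality by invoking Lemma~\ref{supsubsolution} with $u^\ast=\overline{u}$, and then to obtain the ordering \eqref{comparison from above} from the parabolic comparison principle applied on the time interval $[0,+\infty)$. For the first part I need to check hypotheses (i), (ii) and (iii) of Lemma~\ref{supsubsolution} for the function $\overline{u}$. Condition (iii) is exactly Lemma~\ref{lem supersolution}: for $\varepsilon$ small and $0<\alpha\le\alpha(\varepsilon)$ the function $\overline{u}$ is a supersolution of \eqref{RDxy}, so $\partial_t\overline{u}-\Delta\overline{u}-f(\overline{u})\ge0$ in $\{\overline{u}<1\}$. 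For (i), I must show that on the level-set region $\{\delta\le\overline{u}\le1-\delta\}$ one has $\partial_t\overline{u}\ge k>0$ for some constant $k$; here I would use the explicit form $\overline{u}=g(\overline{\xi})+\varepsilon h(\alpha t,\alpha x)$, the formula $\partial_t\overline{u}=g'(\overline{\xi})\partial_t\overline{\xi}+\varepsilon\alpha\,\partial_t h$, together with $\partial_t\overline{\xi}=-(\partial_t\varphi/\sqrt{1+|\nabla\varphi|^2})-\alpha\overline{\xi}\,\nabla\varphi\cdot\nabla\partial_t\varphi/(1+|\nabla\varphi|^2)$ and the bounds $\partial_t\varphi/\sqrt{1+|\nabla\varphi|^2}-c_f\ge0$ from Lemma~\ref{pro-xi}, so that $-\partial_t\overline{\xi}\ge c_f>0$ up to an $O(\alpha h|\overline{\xi}|)$ error; since on $\{\delta\le\overline{u}\le1-\delta\}$ the argument $\overline{\xi}$ lies in a bounded interval (depending on $\delta$) and $-g'$ is bounded below there, and $\partial_t h=c_f h\ge0$, one gets $\partial_t\overline{u}\ge c_f\inf_{[-R_\delta,R_\delta]}(-g')+o(1)>0$ for $\alpha$ small. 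For (ii), outside the transition layer $\partial_t\overline{u}$ may be small but must satisfy only the one-sided lower bound $\partial_t\overline{u}\ge-\mu k/(\mu+L)$; this again follows from $-g'\le0$ is false — rather from $g'<0$ and $-\partial_t\overline{\xi}\ge c_f-O(\alpha h|\overline{\xi}|)$, so $g'(\overline{\xi})\partial_t\overline{\xi}\ge0$ up to a term that is $O(\alpha h|\overline{\xi}g'(\overline{\xi})|)$, which is uniformly small since $|g'(\xi)\xi|\le M$ by \eqref{M}; together with $\varepsilon\alpha\partial_t h=\varepsilon\alpha c_f h\ge0$ this gives $\partial_t\overline{u}\ge-C\alpha$, and for $\alpha$ small enough $C\alpha\le\mu k/(\mu+L)$.

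Once Lemma~\ref{supsubsolution} applies, it produces a constant $\omega=(\mu+L)/(\mu k)$ such that $u^+(t,x,y)=\min\{\overline{u}(t+\omega\delta(1-e^{-\mu t}),x,y)+\delta e^{-\mu t},1\}$ is a supersolution of \eqref{RDxy} on $\R^+\times\R^N$; this is precisely the asserted $u^+$, so the first sentence of the lemma is done with $\mu$ the constant from the Notations section.

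For \eqref{comparison from above}, I would check the initial ordering $u^+(0,x,y)\ge u_0(x,y)$ and then invoke the comparison principle on $[0,\infty)$ (the equation is of standard semilinear parabolic type and both functions are bounded in $[0,1]$, so comparison is valid; the truncation by $1$ is harmless since $1$ is a supersolution). At $t=0$ we have $u^+(0,x,y)=\min\{\overline{u}(0,x,y)+\delta,1\}$. On the region $\{d((x,y),\mathcal{R}_0)\le R\}$ I would use Lemma~\ref{phi-alphat0}: $y-\frac1\alpha\varphi(0,\alpha x)\to-\infty$ uniformly as $\alpha\to0^+$, hence $\overline{\xi}(0,x,y)\to-\infty$ and $g(\overline{\xi}(0,x,y))\to1$ uniformly on that region, so $\overline{u}(0,x,y)\ge1-\delta/2$ there for $\alpha$ small, giving $u^+(0,x,y)\ge\min\{1-\delta/2+\delta,1\}=1\ge u_0$. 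On the complementary region $\{d((x,y),\mathcal{R}_0)>R\}$, for $R$ large the initial-data hypothesis \eqref{initial} gives $|u_0(x,y)-\underline{u}(0,x,y)|\le\delta/2$, while $\overline{u}(0,x,y)\ge\underline{u}(0,x,y)$ by Lemma~\ref{lem comparison of sup and sub}(i); hence $u^+(0,x,y)\ge\overline{u}(0,x,y)+\delta\ge\underline{u}(0,x,y)+\delta\ge u_0(x,y)+\delta/2\ge u_0(x,y)$. Thus $u^+(0,\cdot)\ge u_0=u(0,\cdot)$ everywhere, and the comparison principle yields \eqref{comparison from above}.

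The main obstacle is the verification of conditions (i) and (ii) of Lemma~\ref{supsubsolution} for $\overline{u}$ — that is, controlling $\partial_t\overline{u}$ from below both inside and outside the transition layer, uniformly in $\alpha$. The delicate point is that the perturbation term $\varepsilon h$ and the curvature corrections in $\partial_t\overline{\xi}$ must not destroy the sign $\partial_t\overline{u}>0$; this is handled by the weighted bounds $|g'(\xi)\xi|,|g''(\xi)\xi|\le M$ from \eqref{M} and the exponential smallness of $h$, but requires shrinking $\alpha(\varepsilon)$ once more, exactly as was done at the end of the proof of Lemma~\ref{lem comparison of sup and sub}(i). The matching of initial data near $\mathcal{R}_0$ via Lemma~\ref{phi-alphat0} is the other place where $\alpha$ must be taken small, and these smallness requirements are compatible since there are finitely many of them.
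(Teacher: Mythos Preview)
Your proposal is correct and follows essentially the same route as the paper: invoke Lemma~\ref{supsubsolution} with $u^\ast=\overline{u}$, getting (iii) from Lemma~\ref{lem supersolution} and (i)--(ii) from the explicit lower bound $\partial_t\overline{u}\ge -c_f g'(\overline{\xi})-\alpha C h$ (via Lemma~\ref{pro-xi} and the bound $|g'(\xi)\xi|\le M$), then verify the initial ordering by splitting into $\{d((x,y),\mathcal{R}_0)\le R\}$ (handled by Lemma~\ref{phi-alphat0}) and its complement (handled by $\overline{u}\ge\underline{u}$ and \eqref{initial}), and conclude by comparison. One small slip: in the far-from-ridge region you wrote $u^+(0,x,y)\ge\overline{u}(0,x,y)+\delta$, but $u^+$ is the minimum with $1$; the paper keeps the $\min$ throughout, writing $\min\{\overline{u}(0,x,y)+\delta,1\}\ge\min\{\underline{u}(0,x,y)+\delta,1\}\ge u_0(x,y)$, which is the clean way since $u_0\le 1$.
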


\begin{proof}
By the definition of $u^+$,  we need only to restrict our attention to the domain where $\overline{u}(t,x,y)=g(\overline{\xi}(t,x,y))+\varepsilon h(\alpha t,\alpha x)$. By Lemma~\ref{pro-xi},
\[\partial_t\overline{u}=g^\prime(\overline{\xi})\partial_t\overline{\xi} +\varepsilon \alpha \partial_th(\alpha t,\alpha x)\ge -c_f g^\prime(\overline{\xi})-\alpha Ch(\alpha t,\alpha x) .\]
Because $g^\prime\leq 0$ and $-g^\prime\ge k$ in $[-R,R]$, if $\alpha$ is small enough, then $\overline{u}$ satisfies the assumptions (i)-(ii) in Lemma~\ref{supsubsolution}. Furthermore, by Lemma~\ref{lem supersolution}, (iii) of Lemma~\ref{supsubsolution} is also satisfied. Then by Lemma \ref{supsubsolution}, we deduce that $u^+$ is a supersolution of \eqref{CP}. 

Next let us check the initial value condition. Since $u(0,x,y)=u_0(x,y)$ satisfies \eqref{initial}, there is $R>0$ such that
 \[|u_0(x,y)-\underline{u}(0,x,y)|\le \delta, \hbox{ if $d((x,y),\mathcal{R}_0)\ge R$}.\]
Thus if  $d((x,y),\mathcal{R}_0)\ge R$, then
\[u^+(0,x,y)=\min\{\overline{u}(0,x,y)+\delta,1\}\ge \min\{\underline{u}(0,x,y)+\delta,1\}\ge u_0(x,y).\]
By Lemma~\ref{phi-alphat0}, one can take $\alpha$ small enough so that for $d((x,y),\mathcal{R}_0)\le R$,
\[\overline{u}(0,x,y)\ge 1-\delta.\]
Then $u^+(0,x,y)=\min\{\overline{u}(0,x,y)+\delta,1\}= 1\ge u_0(x,y)$ in $\{d((x,y),\mathcal{R}_0)\le R\}$. Putting these two facts together, we see $u^+(0,x,y)\ge u_0(x,y)$ for all $(x,y)\in\R^N$.

Now $u^+$ is a super-solution and $u^+(0,x,y)\geq u(0,x,y)$, \eqref{comparison from above} follows from the comparison principle.
\end{proof}
\vskip 0.3cm

\subsection{Subsolutions}
In this subsection we construct subsolutions. For every $i\in \{1,\cdots,n\}$, consider the facet $\widetilde{Q}_{i}$ of the polytope $\mathcal{Q}$. For $j\in \{1,\cdots,n\}\setminus\{i\}$, let $G_{ij}=\widetilde{Q}_{i}\cap \widetilde{Q}_{j}$ be the ridges on the facet $\widetilde{Q}_{i}$ even if some $G_{ij}$ may be empty.  Then  for each $j\in \{1,\cdots,n\}\setminus \{i\}$, take two positive constants $\gamma_{ij}$, $\lambda_{ij}$ such that 
\begin{enumerate}
    \item $\gamma_{ij}+\lambda_{ij}=1$;
    \item $\lambda_{ij}$ is small enough so that 
\[\gamma_{ij}\sin\theta_i -\lambda_{ij}\sin\theta_j>0;\]
and for any $l\in \{1,\cdots,n\}\setminus \{i\}$,
\be\label{lambdaij}
\gamma_{ij}(1-\sin\theta_i\sin\theta_l -\nu_i\cdot \nu_l \cos\theta_i\cos\theta_l)-\lambda_{ij}>0.
\ee
\end{enumerate}
Let also $\gamma_{ii}=1$ and $\lambda_{ii}=0$ so that $\gamma_{ij}$ and $\lambda_{ij}$ are defined for every $j\in\{1,\cdots,n\}$. Let 
\[\nu_{ij}=\gamma_{ij} (-\nu_i\cos\theta_i,-\sin\theta_i)+\lambda_{ij}(\nu_j\cos\theta_j,\sin\theta_j).\]
Notice that $\nu_{ii}=(-\nu_i\cos\theta_i,-\sin\theta_i)=-e_i$ and  for any $j\in\{1,\cdots,n\}$, 
\[\nu_{ij}\cdot (0,\cdots,-1)=\gamma_{ij}\sin\theta_i -\lambda_{ij}\sin\theta_j>0 .\]

For every $j\in \{1,\cdots,n\}$, define the hyperplane
\[Q_{ij}=\left\{(t,x,y)\in\R\times\R^N;\  -\gamma_{ij}q_i(t,x,y) +\lambda_{ij} q_j(t,x,y)=0\right\}.\]
It is  a rotation of $Q_i$ against $G_{ij}$.
Let
\[q_{ij}(t,x,y):= -\gamma_{ij}q_i(t,x,y) +\lambda_{ij}q_j(t,x,y).\]
Notice that $Q_{ii}=Q_{i}$. Let $\mathcal{Q}_i$ be the unbounded polytope enclosed by $Q_{ij}$, that is,
\[\mathcal{Q}_i=\left\{(t,x,y)\in\R\times\R^N;\ \min_{1\le j\le n}\{q_{ij}(t,x,y)\}\ge 0\right\}.\]
The unboundedness property is guaranteed by $\nu_{ij}\cdot (0,\cdots,-1)>0$. 
\begin{lemma}
 For each $i$, $\widetilde{Q}_i=Q_{ii}\cap \partial \mathcal{Q}_i$, that is, $\mathcal{Q}$ and $\mathcal{Q}_i$ share the   facet $\widetilde{Q}_i$.
\end{lemma}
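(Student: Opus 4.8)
The goal is to identify the facet $\widetilde{Q}_i$ of the original polytope $\mathcal{Q}$ with the facet $Q_{ii}\cap\partial\mathcal{Q}_i$ of the new polytope $\mathcal{Q}_i$. Recall that $Q_{ii}=Q_i$ and $q_{ii}=q_i$, so both facets live on the same hyperplane $Q_i=\{q_i=0\}$; the content of the lemma is that the ``admissible region'' cut out on $Q_i$ is the same in both cases. Concretely, $\widetilde{Q}_i=\{(t,x,y): q_i(t,x,y)=0,\ q_\ell(t,x,y)\ge 0\text{ for all }\ell\ne i\}$ by the definition of $\mathcal{Q}$ and its facets, while $Q_{ii}\cap\partial\mathcal{Q}_i=\{(t,x,y): q_i(t,x,y)=0,\ q_{ij}(t,x,y)\ge 0\text{ for all }j\ne i\}$ by the definition of $\mathcal{Q}_i$. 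So the plan is to show that, \emph{on the hyperplane} $\{q_i=0\}$, the two systems of inequalities $\{q_\ell\ge 0:\ell\ne i\}$ and $\{q_{ij}\ge 0:j\ne i\}$ define the same set.

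\textbf{Key step.} On $\{q_i=0\}$ one has $q_{ij}=-\gamma_{ij}q_i+\lambda_{ij}q_j=\lambda_{ij}q_j$. For $j\ne i$ we chose $\lambda_{ij}>0$, so $q_{ij}\ge 0 \iff q_j\ge 0$ there. Hence on $\{q_i=0\}$ the condition $\min_{j}q_{ij}\ge 0$ (which for $j=i$ reads $q_{ii}=q_i=0\ge 0$, automatically satisfied) is equivalent to $q_j\ge 0$ for all $j\ne i$, which is exactly the condition defining $\widetilde{Q}_i$. This gives the set equality $\widetilde{Q}_i=Q_{ii}\cap\partial\mathcal{Q}_i$ directly. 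One should also remark why $Q_{ii}\cap\partial\mathcal{Q}_i$ really is a facet of $\mathcal{Q}_i$ (i.e. that $q_i$ attains the minimum $0$ somewhere on the relative interior), but this follows from the nonemptiness of $\widetilde{Q}_i$ — which holds at least for $t\le T$ as noted in Section~2 — together with the just-proved equality.

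\textbf{Main obstacle.} There is essentially no analytic difficulty; the only thing to be careful about is bookkeeping of the indices and the role of the diagonal term $j=i$, where $\gamma_{ii}=1$, $\lambda_{ii}=0$, so $q_{ii}=q_i$ identically and the inequality $q_{ii}\ge 0$ is vacuous on the hyperplane $\{q_i=0\}$ rather than contributing a genuine constraint. It is also worth verifying that the strict inequalities imposed on $\gamma_{ij},\lambda_{ij}$ in \eqref{lambdaij} — which ensure the new polytope $\mathcal{Q}_i$ is genuinely a rotation of $\mathcal{Q}$ about the ridge and stays unbounded — are not actually needed for this particular lemma; only $\lambda_{ij}>0$ is used here. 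Thus the proof is short: reduce both sides to subsets of $\{q_i=0\}$, substitute $q_{ij}=\lambda_{ij}q_j$ there, and invoke $\lambda_{ij}>0$ to conclude.
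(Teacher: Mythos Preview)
Your proof is correct and follows essentially the same route as the paper: both reduce to the identity $q_{ij}=\lambda_{ij}q_j$ on the hyperplane $\{q_i=0\}$ and use $\lambda_{ij}>0$ for $j\ne i$ to conclude the two systems of inequalities cut out the same set. One small slip worth fixing: by definition $q_{ii}=-\gamma_{ii}q_i+\lambda_{ii}q_i=-q_i$, not $q_i$; this is harmless here since $Q_{ii}=\{q_{ii}=0\}=\{q_i=0\}=Q_i$ and on that hyperplane $q_{ii}=0$ regardless of the sign.
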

\begin{proof}
Let $\widetilde{Q}_{ii}$ be the facet $Q_{ii}\cap \partial \mathcal{Q}_i$, that is,
\[\widetilde{Q}_{ii}=\left\{(t,x,y)\in \R\times\R^N;\,\min_{1\le j\le n}\{q_{ij}(t,x,y)\}=q_{ii}(t,x,y)=0\right\}.\]
By the definition of $q_{ii}$, one knows that $q_{ii}(t,x,y)=-q_{i}(t,x,y)=0$ for $(t,x,y)\in \widetilde{Q}_{ii}$. Then  for $(t,x,y)\in\widetilde{Q}_{ii}$ and $j\neq i$, 
\[q_{ij}(t,x,y)=-\gamma_{ij} q_i(t,x,y) +\lambda_{ij} q_j(t,x,y)=\lambda_{ij} q_j(t,x,y)\ge 0.\]
This means
\[\widetilde{Q}_{ii}=\{(t,x,y)\in\R\times\R^N;\, \min_{1\le j\le n} \{q_j(t,x,y)\}=q_i(t,x,y)=0\}=Q_i\cap \partial\mathcal{Q}=\widetilde{Q}_i.\]
That is, $\mathcal{Q}$ and $\mathcal{Q}_i$ share $\widetilde{Q}_i$, see Figure~1.
\end{proof}
\vskip 0.3cm

\begin{figure}[ht]
  \centering
  \includegraphics[scale=0.3]{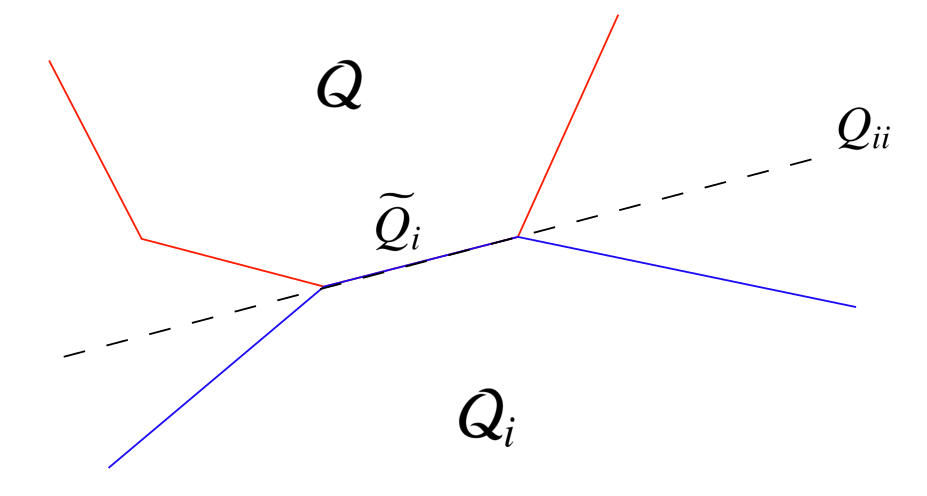} 
  \caption{Sketch of $\mathcal{Q}$ and $\mathcal{Q}_i$.}
\end{figure}

Let $\widehat{Q}_{ij}$ be the projection of $Q_{ij}$ on $(t,x)$-plane. Let $y=\psi_i(t,x)$ be the smooth function determined by 
\be\label{psi-i}
\sum_{j=1}^n e^{-q_{ij}(t,x,y)}=1.
\ee
As in  Subsection~\ref{subsection:phi},   there exists a constant $C>0$ such that for any $(t,x)\in \R\times\R^{N-1}$,
\[
\frac{1}{C}\sum_{\substack{k, l\in\{1,\cdots,n\}\\ k\neq l}} e^{-(\hat{q}_{ik} +\hat{q}_{il})}\le \left(-x\cdot \nu_i \cot\theta_i +\frac{c_f}{\sin\theta_i}t -\frac{\alpha\tau_i}{\sin\theta_i}\right)-\psi_i(t,x)\le C\sum_{\substack{k, l\in\{1,\cdots,n\} \\ k\neq l}} e^{-(\hat{q}_{ik} +\hat{q}_{il})},
\]
where $\hat{q}_{ij}$ represents $q_{ij}(t,x,\psi_i(t,x))$ for short.
As analysis in Subsection~\ref{subsection:phi}, one can show that $\psi_i$ satisfies the following properties.

\begin{lemma}\label{pro-psi}
There is $C>0$ independent of $\alpha$ such that for each $j\in J\cup\{i\}$, for any $(t,x)\in \widehat{Q}_{ij}$,
\be\label{psi-D1}
\left|\partial_t\psi_i(t,x)-\frac{\gamma_{ij} -\lambda_{ij}}{\gamma_{ij}\sin\theta_i -\lambda_{ij}\sin\theta_j}c_f\right|,\ \left|\nabla\psi_i +\frac{\gamma_{ij} \nu_i\cos\theta_i -\lambda_{ij} \nu_j\cos\theta_j}{\gamma_{ij} \sin\theta_i -\lambda_{ij}\sin\theta_j}\right|\le C\sum_{\substack{k, l\in\{1,\cdots,n\}\\ k\neq\ell}} e^{-(\hat{q}_{ik} +\hat{q}_{il})},
\ee
and any for $(t,x)\in\R\times\R^{N-1}$,
\[|\nabla\partial_t\psi_i(t,x)|,\ |\nabla^2\psi_i(t,x)|,\ |\nabla^3\psi_i(t,x)|\le C\sum_{\substack{k, l\in\{1,\cdots,n\}\\ k\neq\ell}} e^{-(\hat{q}_{ik} +\hat{q}_{il})}.\]
\end{lemma}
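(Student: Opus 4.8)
The plan is to repeat the proof of Lemma~\ref{pro-phi} almost word for word, since $\psi_i$ is manufactured from the affine functions $q_{ij}$ in exactly the way $\varphi$ was manufactured from the $q_j$, with $\mathcal{Q}$ replaced by $\mathcal{Q}_i$. First I would substitute $y=\psi_i(t,x)$ into \eqref{psi-i} and differentiate implicitly in $t$ and in $x$. Using $\partial_tq_{ij}=(\gamma_{ij}-\lambda_{ij})c_f$, $\nabla_xq_{ij}=-\gamma_{ij}\nu_i\cos\theta_i+\lambda_{ij}\nu_j\cos\theta_j$ and $\partial_yq_{ij}=-(\gamma_{ij}\sin\theta_i-\lambda_{ij}\sin\theta_j)$, this produces the closed formulas
\[
\partial_t\psi_i=\frac{\sum_j e^{-\hat q_{ij}}(\gamma_{ij}-\lambda_{ij})c_f}{\sum_j e^{-\hat q_{ij}}(\gamma_{ij}\sin\theta_i-\lambda_{ij}\sin\theta_j)},\qquad
\nabla\psi_i=-\frac{\sum_j e^{-\hat q_{ij}}(\gamma_{ij}\nu_i\cos\theta_i-\lambda_{ij}\nu_j\cos\theta_j)}{\sum_j e^{-\hat q_{ij}}(\gamma_{ij}\sin\theta_i-\lambda_{ij}\sin\theta_j)},
\]
together with analogous rational expressions for $\nabla\partial_t\psi_i$, $\nabla^2\psi_i$ and $\nabla^3\psi_i$ whose numerators, after simplification, are sums of terms each carrying a factor $e^{-(\hat q_{ik}+\hat q_{i\ell})}$ with $k\neq\ell$.

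The denominator is the quantity to control. By the choice $\gamma_{ij}\sin\theta_i-\lambda_{ij}\sin\theta_j>0$ (and the trivial case $j=i$, where this equals $\sin\theta_i>0$), each coefficient $\gamma_{ij}\sin\theta_i-\lambda_{ij}\sin\theta_j$ is a fixed positive constant, so, since $\sum_j e^{-\hat q_{ij}}=1$, the denominator is bounded above and below by positive constants depending only on the data (in particular independent of $\alpha$). This already gives uniform bounds on $\partial_t\psi_i$, $\nabla\psi_i$ and, by the same reasoning applied to the higher formulas, on $\nabla\partial_t\psi_i$, $\nabla^2\psi_i$, $\nabla^3\psi_i$. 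For the sharp estimate in a fixed cell $\widehat Q_{ij}$, I would argue exactly as for \eqref{2.1}: the hyperplanes $Q_{ik}$, $k\in\{1,\dots,n\}$, are pairwise non-parallel — this is where the admissibility conditions on $\gamma_{ij},\lambda_{ij}$, in particular \eqref{lambdaij}, enter — so that $\min_{k\neq j}q_{ik}(t,x,\psi_i(t,x))\ge C'\,d\big((t,x),\widehat G_i\big)$, where $\widehat G_i$ is the projection of the ridges of $\mathcal{Q}_i$. Subtracting the constant associated with the single active plane $Q_{ij}$ then leaves in $\widehat Q_{ij}$ an error controlled by $\sum_{k\neq j}e^{-\hat q_{ik}}$.

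It remains to replace $\sum_{k\neq j}e^{-\hat q_{ik}}$ by $\sum_{k\neq\ell}e^{-(\hat q_{ik}+\hat q_{i\ell})}$. On $\widehat Q_{ij}$ the index $j$ realizes $\min_k\hat q_{ik}$, so $\sum_k e^{-\hat q_{ik}}=1$ forces $\tfrac1n\le e^{-\hat q_{ij}}\le 1$, hence $e^{-\hat q_{ik}}\le n\,e^{-(\hat q_{ij}+\hat q_{ik})}\le n\sum_{k\neq\ell}e^{-(\hat q_{ik}+\hat q_{i\ell})}$ for every $k\neq j$; this yields the first line of \eqref{psi-D1}. The higher-derivative estimates come out directly from the formulas described above, whose numerators already contain a product of two distinct exponentials, so no further manipulation is needed there. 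The only step requiring genuine care — hence the main obstacle — is verifying the pairwise non-parallelism of the $Q_{ik}$ that underlies the distance estimate; everything else is a transcription of Subsection~\ref{subsection:phi} with $q_i$ replaced by $q_{ij}$ and $\mathcal{Q}$ by $\mathcal{Q}_i$.
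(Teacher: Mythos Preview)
Your approach is correct and mirrors the paper's: implicit differentiation of \eqref{psi-i} gives exactly the formulas \eqref{psi-tx}, and the paper then bounds the difference in $\widehat{Q}_{ij}$ by the purely algebraic observation that
\[
\partial_t\psi_i-\frac{\gamma_{ij}-\lambda_{ij}}{\gamma_{ij}\sin\theta_i-\lambda_{ij}\sin\theta_j}\,c_f
=\frac{\sum_k e^{-\hat q_{ik}}R_{ijk}}{\sum_k e^{-\hat q_{ik}}(\gamma_{ik}\sin\theta_i-\lambda_{ik}\sin\theta_k)\,(\gamma_{ij}\sin\theta_i-\lambda_{ij}\sin\theta_j)}
\]
with $R_{ijj}=0$, after which your conversion $e^{-\hat q_{ik}}\le n\,e^{-(\hat q_{ij}+\hat q_{ik})}$ on $\widehat{Q}_{ij}$ finishes the estimate.

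One minor correction: the paper does not pass through a distance estimate of type \eqref{2.1} here, nor does it need pairwise non-parallelism of the $Q_{ik}$; the vanishing of the $k=j$ term is immediate from the explicit form of $R_{ijk}$ and $\hat R_{ijk}$. In particular, condition \eqref{lambdaij} is not used in this lemma at all --- it enters only later, in Lemma~\ref{pro-xi-i}, to secure the \emph{lower} bound in \eqref{uxi-i}. So the step you flag as the ``main obstacle'' is in fact absent from the argument.
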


\begin{proof}
The proof is similar to that of Lemma~\ref{pro-phi}, but because now $q_{ij}$ are not distance functions, the computation needs some changes. We only show \eqref{psi-D1} and analogous computations can be applied to the rest. By substituting $y=\psi_i(t,x)$ into \eqref{psi-i} and differentiating it, one gets 
\be\label{psi-tx}
 \left\{\begin{aligned}
			&\partial_t\psi_i(t,x)=\frac{\sum_{j=1}^n e^{-\hat{q}_{ij}}(\gamma_{ij} -\lambda_{ij})c_f}{\sum_{j=1}^n e^{-\hat{q}_{ij}}(\gamma_{ij}\sin\theta_i -\lambda_{ij}\sin\theta_j)},\\
			& \nabla\psi_i(t,x)=-\frac{\sum_{j=1}^n e^{-\hat{q}_{ij}}(\gamma_{ij}\nu_i\cos\theta_i -\lambda_{ij}\nu_j\cos\theta_j)}{\sum_{j=1}^n e^{-\hat{q}_{ij}}(\gamma_{ij}\sin\theta_i -\lambda_{ij}\sin\theta_j)}.
		\end{aligned}\right.
\ee
Recall that $\lambda_{ij}$ is small so that $\gamma_{ij}\sin\theta_i -\lambda_{ij}\sin\theta_j>0$. Hence there is $C>0$ such that  for $(t,x)\in\widehat{Q}_{ij}$,
\begin{align*}
\left| \partial_t\psi_i - \frac{\gamma_{ij} -\lambda_{ij}}{\gamma_{ij}\sin\theta_i -\lambda_{ij}\sin\theta_j}c_f \right|
=&\left| \frac{\sum_{k=1}^n e^{-\hat{q}_{ik}} R_{ijk}}{\sum_{k=1}^n e^{-\hat{q}_{ik}}(\gamma_{ik}\sin\theta_i -\lambda_{ik}\sin\theta_k)(\gamma_{ij} \sin\theta_i -\lambda_{ij}\sin\theta_j)} \right|\\
\le& C\sum_{\substack{k, l\in\{1,\cdots,n\}\\ k\neq\ell}} e^{-(\hat{q}_{ik} +\hat{q}_{il})} ,
\end{align*}
and
\begin{align*}
\left| \nabla\psi_i + \frac{\gamma_{ij} \nu_i\cos\theta_i -\lambda_{ij} \nu_j\cos\theta_j}{\gamma_{ij} \sin\theta_i -\lambda_{ij}\sin\theta_j} \right|=&\left| \frac{\sum_{k=1}^n e^{-\hat{q}_{ik}} \hat{R}_{ijk}}{\sum_{k=1}^n e^{-\hat{q}_{ik}}(\gamma_{ik}\sin\theta_i -\lambda_{ik}\sin\theta_k)(\gamma_{ij} \sin\theta_i -\lambda_{ij}\sin\theta_j)} \right|\\
\le& C\sum_{\substack{k, l\in\{1,\cdots,n\}\\ k\neq\ell}}e^{-(\hat{q}_{ik} +\hat{q}_{il})} ,
\end{align*}
where 
\[R_{ijk}=(\gamma_{ik}-\lambda_{ik})(\gamma_{ij}\sin\theta_i -\lambda_{ij}\sin\theta_j)-(\gamma_{ij}-\lambda_{ij})(\gamma_{ik}\sin\theta_i -\lambda_{ik}\sin\theta_k),\]
and
\begin{align*}
\hat{R}_{ijk}=&(\gamma_{ij}\nu_i\cos\theta_i -\lambda_{ij}\nu_j\cos\theta_j)(\gamma_{ik} \sin\theta_i -\lambda_{ik}\sin\theta_k)\\
&-(\gamma_{ik}\nu_i\cos\theta_i -\lambda_{ik}\nu_k\cos\theta_k)(\gamma_{ij} \sin\theta_i -\lambda_{ij}\sin\theta_j).
\end{align*}
In the above, we have used the facts that $R_{ijk}$, $\hat{R}_{ijk}$ are bounded and $R_{ijk}=\hat{R}_{ijk}=0$ when $k=j$.
\end{proof}
\vskip 0.3cm

Fix any $i\in\{1,\cdots,n\}$. For $\varepsilon>0$, $\alpha>0$ and $(t,x,y)\in\R\times\R^N$, define 
\[u_{i, \ast}(t,x,y):=\max\left\{g(\xi_{i,\ast}(t,x,y)) -\varepsilon \hat{h}_i(\alpha t,\alpha x),0\right\},\]
where
\[\xi_{i,\ast}(t,x,y)=\frac{y -\frac{1}{\alpha}\psi_i(\alpha t,\alpha x)}{\sqrt{1+|\nabla\psi_i(\alpha t,\alpha x)|^2}} ~\hbox{ and } ~ \hat{h}_i(t,x)=\sum_{k, l\in\{1,\cdots,n\},k\neq l} e^{-(\hat{q}_{ik} +\hat{q}_{il})}.\]
In the sequel, we will denote $\psi_i(\alpha t,\alpha x)$ and its derivatives by $\psi_i$, $\partial_t\psi_i$, $\nabla\psi_i$, $\cdots$ for short.
A direct computation shows that
\[-\partial_t\xi_{i,\ast} -c_f =\frac{\partial_t\psi_i}{\sqrt{1+|\nabla\psi_i|^2}}-c_f +\frac{\alpha \nabla\psi_i\cdot \nabla\partial_t\psi_i}{1+|\nabla\psi_i|^2}\xi_{i,\ast}.\]
By Lemma~\ref{pro-psi}, one has the following lemma.

\begin{lemma}\label{pro-xi-i}
There exists a positive constant $C$ such that for $(t,x,y)\in\R\times\R^N$,
\be\label{uxi-i}
\frac{1}{C} \hat{h}_i(\alpha t,\alpha x)\le c_f-\frac{\partial_t\psi_i}{\sqrt{1+|\nabla\psi_i|^2}}\le C \hat{h}_i(\alpha t,\alpha x),
\ee
and
\[
 \left\{\begin{aligned}
			& \left|\frac{\alpha \nabla\psi_i\cdot \nabla\partial_t\psi_i}{1+|\nabla\psi_i|^2}\xi_{i,\ast}\right|\le \alpha C \hat{h}_i(\alpha t,\alpha x)|\xi_{i,\ast}|,\\
			& \left|1-|\nabla_x\xi_{i,\ast}|^2 -|\partial_y \xi_{i,\ast}|^2\right|\le \alpha C \hat{h}_i(\alpha t,\alpha x)(|\xi_{i,\ast}|+\xi_{i,\ast}^2),\\
   &|\Delta_x \xi_{i,\ast}|\le \alpha C h(\alpha t,\alpha x)(1+|\xi_{i,\ast}|+ \alpha |\xi_{i,\ast}|).
   \end{aligned}\right.
\]
\end{lemma}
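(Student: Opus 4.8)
The plan is to run the proof of Lemma~\ref{pro-xi} essentially verbatim, making the substitutions $\varphi\rightsquigarrow\psi_i$, $\overline{\xi}\rightsquigarrow\xi_{i,\ast}$ and $h\rightsquigarrow\widehat{h}_i$, and using Lemma~\ref{pro-psi} wherever the earlier proof used Lemma~\ref{pro-phi}. The identity for $-\partial_t\xi_{i,\ast}-c_f$ displayed just before the statement already isolates the one genuinely new computation, namely the first estimate \eqref{uxi-i}; the other three are obtained by the same bookkeeping as in \eqref{xi-3}--\eqref{xi-4}.

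First I would dispose of the last three inequalities. Differentiating $\xi_{i,\ast}$ and using the chain rule, one writes $\partial_t\xi_{i,\ast}$, $\Delta_x\xi_{i,\ast}$ and $1-|\nabla_x\xi_{i,\ast}|^2-|\partial_y\xi_{i,\ast}|^2$ through $\psi_i,\nabla\psi_i,\nabla\partial_t\psi_i,\nabla^2\psi_i,\nabla^3\psi_i$; these formulas are formally identical to the ones for $\overline{\xi}$ appearing in the proof of Lemma~\ref{pro-xi}. Substituting the bounds $|\nabla\partial_t\psi_i|+|\nabla^2\psi_i|+|\nabla^3\psi_i|\le C\widehat{h}_i$ from Lemma~\ref{pro-psi}, the uniform bounds on $\partial_t\psi_i$ and $|\nabla\psi_i|$ (which hold because $\theta_i\in(0,\pi/2]$ and $\min_jq_{ij}\ge 0$ on the graph $\{y=\psi_i\}$), and \eqref{M} to absorb the $g'(\xi_{i,\ast})\xi_{i,\ast}$-type factors, one gets the three estimates after the harmless rescaling $(t,x)\mapsto(\alpha t,\alpha x)$, with the same pattern of $\alpha$-powers as before.

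The substantive step is \eqref{uxi-i}. Here I would substitute the explicit expressions \eqref{psi-tx}, abbreviate $a_j:=e^{-\widehat{q}_{ij}}$ (so that $\sum_{j=1}^na_j=1$ by \eqref{psi-i}), $s_j:=\gamma_{ij}\sin\theta_i-\lambda_{ij}\sin\theta_j$, $w_j:=\gamma_{ij}\nu_i\cos\theta_i-\lambda_{ij}\nu_j\cos\theta_j$, and write
\[
c_f-\frac{\partial_t\psi_i}{\sqrt{1+|\nabla\psi_i|^2}}
=c_f\cdot\frac{\sqrt{\Big(\sum_ja_js_j\Big)^2+\Big|\sum_ja_jw_j\Big|^2}-\sum_ja_j(\gamma_{ij}-\lambda_{ij})}{\sqrt{\Big(\sum_ja_js_j\Big)^2+\Big|\sum_ja_jw_j\Big|^2}}.
\]
Up to reordering coordinates, $\big(\sum_ja_jw_j,\sum_ja_js_j\big)=-\sum_ja_j\nu_{ij}$ with $\nu_{ij}=-\gamma_{ij}e_i+\lambda_{ij}e_j$, and $\sum_ja_j(\gamma_{ij}-\lambda_{ij})=1-2\sum_{j\ne i}a_j\lambda_{ij}$. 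Since $\theta_i\in(0,\pi/2]$, the $\lambda_{ij}$ are small (by their defining conditions, in particular \eqref{lambdaij}) and $\sum_ja_j=1$, the denominator and the quantity $\sum_ja_j(\gamma_{ij}-\lambda_{ij})$ both stay between two positive constants; so \eqref{uxi-i} reduces to showing that
\[
\Big|\sum_ja_j\nu_{ij}\Big|^2-\Big(\sum_ja_j(\gamma_{ij}-\lambda_{ij})\Big)^2
=\sum_{j,k}a_ja_k\Big[\nu_{ij}\cdot\nu_{ik}-(\gamma_{ij}-\lambda_{ij})(\gamma_{ik}-\lambda_{ik})\Big]
\]
is two-sided comparable to $\widehat{h}_i=\sum_{k\ne\ell}a_ka_\ell$. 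Using $\gamma_{ij}+\lambda_{ij}=1$ and $\gamma_{ii}=1,\lambda_{ii}=0$, the bracket simplifies to $\gamma_{ij}\lambda_{ik}(1-e_i\cdot e_k)+\lambda_{ij}\gamma_{ik}(1-e_i\cdot e_j)-\lambda_{ij}\lambda_{ik}(1-e_j\cdot e_k)$; since $1-e_i\cdot e_j>0$ for $j\ne i$ (because $e_i\ne e_j$), the first two (symmetric) terms are nonnegative and, summed against $a_ja_k$, of the same order as $\widehat{h}_i$, while the last term is of size $O\big(\max_j\lambda_{ij}^2\,\widehat{h}_i\big)$ and is absorbed once the $\lambda_{ij}$ are taken small enough — which is exactly what their defining conditions \eqref{lambdaij} arrange. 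The lower bound in \eqref{uxi-i} is geometrically the statement that each rotated face $Q_{ij}$, $j\ne i$, of $\mathcal{Q}_i$ propagates strictly slower than $c_f$.

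I expect this last estimate to be the only real obstacle. The difference from Subsection~\ref{subsection:phi} is that the $q_{ij}$ are no longer signed distance functions, so $\sqrt{1+|\nabla\psi_i|^2}$ does not simplify and the normal speed must be extracted by hand from \eqref{psi-tx}; keeping all the denominators uniformly bounded away from $0$ and bounding the cross terms in the expansion above by $\widehat{h}_i$ in both directions is precisely where the special choice of the rotation parameters $\gamma_{ij},\lambda_{ij}$ enters. Once \eqref{uxi-i} is established, the rest follows the pattern of Lemmas~\ref{pro-phi}--\ref{pro-xi} with no new ideas.
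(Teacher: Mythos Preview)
Your plan matches the paper's proof exactly: the three derivative estimates on $\xi_{i,\ast}$ are obtained by transcribing the proof of Lemma~\ref{pro-xi} with Lemma~\ref{pro-psi} in place of Lemma~\ref{pro-phi}, and the key estimate \eqref{uxi-i} is reduced to analysing
\[
Q^2-P^2=\sum_{j,k}a_ja_k\Big[\gamma_{ij}\lambda_{ik}(1-e_i\cdot e_k)+\lambda_{ij}\gamma_{ik}(1-e_i\cdot e_j)-\lambda_{ij}\lambda_{ik}(1-e_j\cdot e_k)\Big],
\]
with $a_j=e^{-\widehat{q}_{ij}}$, $P=\sum_ja_j(\gamma_{ij}-\lambda_{ij})$ and $Q=|\sum_ja_j\nu_{ij}|$. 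This is precisely the expansion the paper writes (up to a harmless $a_j$ versus $a_j^2$ misprint in the diagonal term there), and your use of \eqref{lambdaij} to make each off-diagonal bracket strictly positive gives the lower bound $Q^2-P^2\ge c\,\widehat h_i$ just as the paper intends.

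There is, however, a genuine issue with the \emph{upper} half of \eqref{uxi-i}, and your sentence ``of the same order as $\widehat h_i$'' glosses over it. The diagonal contribution $j=k$ to $Q^2-P^2$ is $\sum_{j\ne i}2a_j^2\gamma_{ij}\lambda_{ij}(1-e_i\cdot e_j)$; if $(t,x)\to\infty$ inside $\widehat Q_{ij}$ for some fixed $j\ne i$, then $a_j\to1$, this diagonal term tends to the positive constant $2\gamma_{ij}\lambda_{ij}(1-e_i\cdot e_j)$, and hence $c_f-\partial_t\psi_i/\sqrt{1+|\nabla\psi_i|^2}\to c_f\big(1-(\gamma_{ij}-\lambda_{ij})/|\nu_{ij}|\big)>0$, while $\widehat h_i\to0$. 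So the upper bound $\le C\widehat h_i$ cannot hold as stated; geometrically, the rotated face $Q_{ij}$ has normal speed strictly below $c_f$, not asymptotically equal to it. The paper's own proof is silent on this point, and in fact only the lower bound is ever used downstream (in Lemma~\ref{asy-ui} and Lemma~\ref{u^-} one needs $c_f-\partial_t\psi_i/\sqrt{1+|\nabla\psi_i|^2}\ge \tfrac{1}{C}\widehat h_i$ together with the trivial uniform boundedness of this quantity). Your argument is therefore adequate for what is actually needed, but the two-sided claim should be weakened to a one-sided one.
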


\begin{proof}
We only show \eqref{uxi-i}. The rest inequalities follows from Lemma~\ref{pro-psi}, just as in the proof of Lemma~\ref{pro-xi}. After some computation and by \eqref{psi-tx}, one gets that
\begin{align*}
&c_f-\frac{\partial_t\psi_i}{\sqrt{1+|\nabla\psi_i|^2}}\\
=&c_f\left(1-\frac{\sum_{j=1}^n e^{-\hat{q}_{ij}}(\gamma_{ij}-\lambda_{ij})}{\sqrt{(\sum_{j=1}^n e^{-\hat{q}_{ij}}(\gamma_{ij}\sin\theta_i-\lambda_{ij}\sin\theta_j))^2 +(\sum_{j=1}^n e^{-\hat{q}_{ij}}(\gamma_{ij}\nu_j\cos\theta_i-\lambda_{ij}\nu_j\cos\theta_j))^2}} \right).
\end{align*}
Further computation shows that
\begin{align*}
&\left(\sum_{j=1}^n e^{-\hat{q}_{ij}}(\gamma_{ij}\sin\theta_i-\lambda_{ij}\sin\theta_j)\right)^2 +\left(\sum_{j=1}^n e^{-\hat{q}_{ij}}(\gamma_{ij}\nu_j\cos\theta_i-\lambda_{ij}\nu_j\cos\theta_j)\right)^2\\
=&\left(\sum_{j=1}^n e^{-\hat{q}_{ij}}(\gamma_{ij}-\lambda_{ij})\right)^2 +2\sum_{j=1}^n e^{-\hat{q}_{ij}}\gamma_{ij}\lambda_{ij}\left(1-\sin\theta_i\sin\theta_j -\nu_i\cdot\nu_j \cos\theta_i\cos\theta_j\right)\\
&+ \sum_{k, l\in\{1,\cdots,n\},k\neq l} e^{-(\hat{q}_{ik} +\hat{q}_{il})}\Big[\gamma_{ik}\lambda_{il}(1-\sin\theta_i\sin\theta_l -\nu_i\cdot \nu_l \cos\theta_i\cos\theta_l) \\
&+\lambda_{ik}\gamma_{il}(1-\sin\theta_k\sin\theta_i -\nu_k\cdot \nu_i\cos\theta_k\cos\theta_i)+\lambda_{ik}\lambda_{il}(\sin\theta_k\sin\theta_l +\nu_k\cdot\nu_l\cos\theta_k\cos\theta_l -1)\Big].
\end{align*}
Then \eqref{uxi-i} follows from \eqref{lambdaij} and the facts that $\theta_i\in (0,\pi/2]$, $(\nu_i,\theta_i)\neq (\nu_j,\theta_j)$ for $i\neq j$.
\end{proof}
\vskip 0.3cm

Now we show that $u_{i, \ast}$ is a subsolution and describe its asymptotic properties.   Let $\overline{\mathcal{R}}$ be the set of ridges and $\overline{P}_i$ be the facet of the polytope $\overline{\mathcal{P}}$ defined by
\[ \overline{\mathcal{P}}:=\left\{(t,x,y)\in\R^N; \min_{1\le j\le n}\{-\gamma_{ij} p_i(t,x,y) +\lambda_{ij} p_j(t,x,y)\}\ge 0\right\},\]
and let
\[p_i(t,x,y)=x\cdot \nu_i\cos\theta_i+y\sin\theta_i-c_ft +\tau_i, \hbox{ for $i=1,\cdots,n$}.\]
Notice that $\overline{\mathcal{P}}$ is the no-scaling version of $\mathcal{Q}_i$.

\begin{lemma}\label{asy-ui}
There exist $\varepsilon_0$ and $\alpha(\varepsilon)$ such that for $0<\varepsilon<\varepsilon_0$ and $0<\alpha\le \alpha(\varepsilon)$, the function $u_{i, \ast}$ is a subsolution of \eqref{RDxy} in $\R\times \R^N$.  Moreover, it satisfies
\begin{itemize}
\item[(i)] for $t\in\R$ and $(x,y)\in\R^N$,
 \[u_{i, \ast}(t,x,y)<g(x\cdot \nu_i \cos\theta_i +y\sin\theta_i -c_f t +\tau_i);\]

\item[(ii)] there exists a $\rho(\varepsilon)>0$ such that if $\min_{1\le j\le n}\{-\gamma_{ij} p_i(t,x,y) +\lambda_{ij} p_j(t,x,y)\}\le -\rho(\varepsilon)$, then
\[ 
u_{i, \ast}(t,x,y)\le (n^2+1)\varepsilon;
\]
and if $\min_{1\le j\le n}\{-\gamma_{ij} p_i(t,x,y) +\lambda_{ij} p_j(t,x,y)\}\ge \rho(\varepsilon)$, then
\[
u_{i, \ast}(t,x,y)\ge 1-(n^2+1)\varepsilon;
\]

\item[(iii)]  for any $M>0$, in $\{d((t,x,y),\overline{P}_i)\le M\}$,
\[
|u_{i, \ast}(t,x,y)-g(x\cdot \nu_i \cos\theta_i +y\sin\theta_i -c_f t +\tau_i)|\rightarrow 0 \quad \text{uniformly as $d((t,x,y),\overline{\mathcal{R}})\rightarrow +\infty$}.
\]
\end{itemize}
\end{lemma}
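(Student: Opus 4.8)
The plan is to transcribe, with the signs reversed, the arguments already carried out for the supersolution $\overline u$ in Lemma~\ref{lem supersolution} and for Lemma~\ref{lem comparison of sup and sub}, now with $\psi_i$, $\hat h_i$, $\mathcal Q_i$, $\overline{\mathcal P}$ and $\overline{\mathcal R}$ in place of $\varphi$, $h$, $\mathcal Q$, $\mathcal P$ and $\mathcal R$, and with the planar solution $g(\xi_i)$ (recall $\xi_i=p_i$) in place of $\underline u$. The one structural novelty is that the $q_{ij}$ are no longer signed distance functions, so the clean identity $\partial_t h=c_fh$ is lost; but Lemma~\ref{pro-psi} still gives $|\partial_t\hat h_i|+|\Delta\hat h_i|\le C\hat h_i$, together with $\hat h_i\le n^2$, and that is all the earlier computations actually use.

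\emph{Subsolution property.} On the open set $\{u_{i,\ast}>0\}$, where $u_{i,\ast}=g(\xi_{i,\ast})-\varepsilon\hat h_i$, a direct computation gives
\begin{align*}
N:=\partial_t u_{i,\ast}-\Delta u_{i,\ast}-f(u_{i,\ast})={}&g'(\xi_{i,\ast})(\partial_t\xi_{i,\ast}+c_f)+g''(\xi_{i,\ast})\bigl(1-|\nabla_x\xi_{i,\ast}|^2-|\partial_y\xi_{i,\ast}|^2\bigr)-g'(\xi_{i,\ast})\Delta_x\xi_{i,\ast}\\
&+f(g(\xi_{i,\ast}))-f(u_{i,\ast})-\varepsilon\alpha\,\partial_t\hat h_i+\varepsilon\alpha^2\Delta\hat h_i.
\end{align*}
Using the planar-front identity $-c_fg'=g''+f(g)$, Lemma~\ref{pro-xi-i} and \eqref{M}, the leading contribution is $g'(\xi_{i,\ast})\tfrac{1}{C}\hat h_i\le 0$, while all the remaining terms except $f(g(\xi_{i,\ast}))-f(u_{i,\ast})$ are $O(\alpha\hat h_i)$. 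Splitting into $\{\xi_{i,\ast}\ge R\}$, $\{\xi_{i,\ast}\le -R\}$ and $\{|\xi_{i,\ast}|\le R\}$ and using \eqref{sigma}, the bound $-g'\ge k$ on $[-R,R]$ and $\hat h_i\le n^2$, I would fix $\varepsilon_0$ and a modulus $\alpha(\varepsilon)$ exactly as in Lemma~\ref{lem supersolution} so that $N\le 0$ in every region; this proves the subsolution property.

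\emph{Parts (i)--(iii).} For (i), since $\sum_{j}e^{-q_{ij}}=1$ in \eqref{psi-i} forces $q_{ii}=-q_i\ge 0$, the graph of $\psi_i$ lies on the side $q_i\le 0$ of the plane $Q_i$; on $\widehat{Q}_{ii}$ this gives either $\xi_{i,\ast}\ge\xi_i$ or, in the Taylor regime (where $|\xi_i|\ge A/\alpha$ for a universal $A>0$, as in Lemma~\ref{lem comparison of sup and sub}(i)), $g(\xi_{i,\ast})-g(\xi_i)\le|g'(\cdot)\xi_i^2|\,\tfrac{\alpha}{A}\hat h_i$ with $|g'(\cdot)\xi_i^2|$ bounded by \eqref{M}; after shrinking $\alpha(\varepsilon)$ this yields $g(\xi_{i,\ast})-\varepsilon\hat h_i\le g(\xi_i)$ on $\R\times\R^N$, hence $u_{i,\ast}\le g(\xi_i)$, and the strong maximum principle (applied to the linearisation of \eqref{RDxy}, using that $g(\xi_i)$ is an exact solution) upgrades this to a strict inequality, since an interior contact point would force $u_{i,\ast}\equiv g(\xi_i)$ for all earlier times and then letting $y\to+\infty$ would force $\hat h_i\equiv0$, a contradiction. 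For (ii) and (iii) I would rerun the compactness arguments of Lemma~\ref{lem comparison of sup and sub}(ii)--(iii): rescaling by $\alpha$, using that the graph of $\psi_i$ stays a bounded distance from $\partial\mathcal Q_i$, that $\hat h_i\to0$ away from the ridges of $\mathcal Q_i$, and that $g\to0$ (resp.\ $1$) as $\xi_{i,\ast}\to+\infty$ (resp.\ $-\infty$); for (iii) one additionally invokes the facet-approach property of $\psi_i$ — the analogue of \eqref{app-facets} coming from Lemma~\ref{pro-psi} — so that near $\overline{P}_i$ and far from $\overline{\mathcal R}$ one has $\xi_{i,\ast}\to\xi_i$ and $\hat h_i\to0$, whence $u_{i,\ast}\to g(\xi_i)$.

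The main obstacle is simply keeping honest track of the geometry once $Q_i$ is rotated about the ridge $G_{ij}$ into $Q_{ij}$: the functions $q_{ij}$ are no longer distances, $\partial_t\hat h_i\neq c_f\hat h_i$, and one must check that the sign in \eqref{uxi-i} is still the one needed to make $g'(\xi_{i,\ast})\tfrac{1}{C}\hat h_i\le 0$ appear with the correct (negative) sign in $N$ — this is exactly where the smallness of $\lambda_{ij}$ and inequality \eqref{lambdaij} enter, playing the role that $1-e_i\cdot e_j>0$ played for $\overline u$. Once Lemma~\ref{pro-psi} and \eqref{uxi-i} are granted, everything above is a routine transcription of Section~3, with the only genuine care needed being the rescaling $(t,x)\mapsto(\alpha t,\alpha x)$ and the chasing of constants through $\varepsilon_0$ and $\alpha(\varepsilon)$.
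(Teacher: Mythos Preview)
Your proposal is correct and follows essentially the same approach as the paper: the subsolution computation is carried out exactly as you describe (splitting into the three $\xi_{i,\ast}$-regions with constants $\varepsilon_0$, $\alpha(\varepsilon)$ chosen as in Lemma~\ref{lem supersolution}), and for (i)--(iii) the paper simply writes that they ``can be proved as in the proof of Lemma~\ref{lem comparison of sup and sub}''. Your identification of the key technical novelties---that the $q_{ij}$ are not distance functions so only $|\partial_t\hat h_i|\le C\hat h_i$ holds in place of an identity, and that \eqref{lambdaij} is precisely what secures the correct sign in \eqref{uxi-i}---is in fact more explicit than what the paper itself provides.
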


\begin{proof}
Let parameters $\sigma$, $L$, $R$, $k$ and $M$ be   as in the proof of Lemma~\ref{lem supersolution}.
By the definition of $\hat{h}_i$, there is $C>0$ such that
\[	\left\{\begin{aligned}
			&|\partial_t\hat{h}_i(t,x)|=\Big|c_f \sum_{\substack{k, l\in\{1,\cdots,n\}\\ k\neq\ell}} e^{-(\hat{q}_{ik} +\hat{q}_{il})}(\gamma_{ik}-\lambda_{ik}+\gamma_{il}-\lambda_{il})\Big|\le 2 c_f \hat{h}_i(t,x),\\
& |\Delta \hat{h}_i(t,x)|\le C \hat{h}_i(t,x).
\end{aligned}
\right.
\]
After increasing $C$,  Lemma~\ref{pro-xi-i} still holds.

Notice that $\hat{h}_i(t,x)$ is bounded, that is, $\hat{h}_i(t,x)\le n^2$ precisely.  Let 
		\[\varepsilon_0=\min \left\{\frac{\sigma}{n^2},\frac{k }{2CL},1\right\},\]
and for $0<\varepsilon<\varepsilon_0$,
\[\alpha(\varepsilon)=\min\left\{1,-\frac{f^\prime(0)\varepsilon}{2(6C M + 2 c_f + C)},-\frac{f^\prime(1)\varepsilon}{2(6C M + 2 c_f + C)}\right\}.\]

We need only to perform the following calculation in  $\{u_{i, \ast}>0\}$. By the definition of $u_{i, \ast}$,  Lemma \ref{pro-xi-i} and \eqref{M}, we have
\begin{align*}
N:=&\partial_tu_{i, \ast} -\Delta \underline{u}_{i} -f(\underline{u}_{i})\\
=&g^\prime(\xi_{i,\ast})(\partial_t\xi_{i,\ast} +c_f) +g^{\prime\prime}(\xi_{i,\ast})(1-\nabla_x \xi_{i,\ast}^2-\partial_y\xi_{i,\ast}^2)-g^\prime(\xi_{i,\ast})\Delta_x\xi_{i,\ast} \\
&+f(g(\xi_{i,\ast})) -f(u_{i, \ast})-\varepsilon \alpha \hat{h}_{i,t}(\alpha t,\alpha x) +\varepsilon \alpha^2 \Delta \hat{h}_i(\alpha t,\alpha x)\\
\le& g^\prime(\xi_{i,\ast})\frac{1}{C} \hat{h}_i(\alpha t,\alpha x)+M \alpha C \hat{h}_i(\alpha t,\alpha x) +2M \alpha C \hat{h}_i(\alpha t,\alpha x) +3M\alpha C \hat{h}_i(\alpha t,\alpha x)\\
&+f(g(\xi_{i,\ast})) -f(u_{i, \ast})+2\varepsilon \alpha c_f \hat{h}_i(\alpha t,\alpha x) +\varepsilon \alpha^2 C \hat{h}_i(\alpha t,\alpha x)\\
=& g^\prime(\xi_{i,\ast})\frac{1}{C} \hat{h}_i(\alpha t,\alpha x)+\alpha  \hat{h}_i(\alpha t,\alpha x)(6C M +  2 c_f +   C)+f(g(\xi_{i,\ast})) -f(u_{i, \ast}).
\end{align*}

In $\{\xi_{i,\ast}\ge R\}$, because $0<\varepsilon<\varepsilon_0$,  $u_{i, \ast}(t,x,y)=g(\xi_{i,\ast})-\varepsilon \hat{h}_i(\alpha t,\alpha x)\le 2\sigma$. Then
\[f(g(\xi_{i,\ast})) -f(u_{i, \ast})\le \frac{f^\prime(0)}{2} \varepsilon \hat{h}_i(\alpha t,\alpha x).\]
Thus by $g^\prime<0$ and $0<\alpha\le \alpha(\varepsilon)$,   $N\le 0$. Similarly, we deduce that $N\le 0$ in $\{\xi_{i,\ast}\ge R\}$. 

In $\{-R\le \xi_{i,\ast}\le R\}$, $-g^\prime(\xi_{i,\ast})\ge k$ and 
\[f(g(\xi_{i,\ast})) -f(u_{i, \ast})\le L \varepsilon \hat{h}_i(\alpha t,\alpha x).\]
Therefore, because $0<\varepsilon<\varepsilon_0$ and $0<\alpha\le \alpha(\varepsilon)$, we get
\[N(t,x,y)\le -k \frac{1}{C} \hat{h}_i(\alpha t,\alpha x)+\alpha  \hat{h}_i(\alpha t,\alpha x)(6C M +  2c_f +   C)+L \varepsilon \hat{h}_i(\alpha t,\alpha x)\le 0.\]

In conclusion, $N\le 0$ in the whole  $\{u_{i, \ast}>0\}$. The remaining properties of $u_{i, \ast}$ can be proved as in the proof of Lemma~\ref{lem comparison of sup and sub}. 
\end{proof}
\vskip 0.3cm

For $t\ge 0$ and $(x,y)\in\R^N$, define
\[u_i^-(t,x,y)= \max\left\{u_{i, \ast}(t-\omega \delta( 1 - e^{-\mu t}),x,y) -\delta e^{-\mu t},0\right\},\]
where $\omega$ is a constant to be determined.
Similar to Lemma~\ref{phi-alphat0}, one can prove the following lemma.

\begin{lemma}\label{psi-alphat0}
For any $R>0$, in $\{(x,y): d((x,y),\overline{\mathcal{R}}_0)\le R\}$,
\[y-\frac{1}{\alpha}\psi(0,\alpha x)\rightarrow +\infty \hbox{  uniformly as $\alpha \rightarrow 0^+$},\]
where $\overline{\mathcal{R}}_0$ is the time slice of $\overline{\mathcal{R}}$ at $t=0$.
\end{lemma}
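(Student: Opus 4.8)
The plan is to rerun the proof of Lemma~\ref{phi-alphat0} almost verbatim; the one essential change is the \emph{direction} of the blow-up, which is reversed because the polytope $\mathcal{Q}_i$ (whose no-scaling version is $\overline{\mathcal{P}}$) lies on the side of its boundary graph \emph{opposite} to that of $\mathcal{Q}$: every $q_{ij}$ is strictly decreasing in $y$, since $\gamma_{ij}\sin\theta_i-\lambda_{ij}\sin\theta_j>0$ (equivalently $\nu_{ij}\cdot(0,\dots,-1)>0$), whereas every $q_i$ is increasing in $y$. In the statement, $\psi$ denotes the function $\psi_i$ attached to the index $i$ fixed throughout this subsection and defined by \eqref{psi-i}; set $\overline{q}_{i\ell}:=-\gamma_{i\ell}p_i+\lambda_{i\ell}p_\ell$, the no-scaling analogue of $q_{i\ell}$, so that $\overline{\mathcal{P}}=\{\min_\ell\overline{q}_{i\ell}\ge0\}$.

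Fix $(x,y)$ with $d((x,y),\overline{\mathcal{R}}_0)\le R$. Then $(x,y)$ lies within $R$ of the intersection of two facets of $\overline{\mathcal{P}}$, hence within $R$ of the zero sets of two of the functions $\overline{q}_{i\ell}(0,\cdot)$; call these indices $j$ and $k$. Since $|\nu_{i\ell}|\le\gamma_{i\ell}+\lambda_{i\ell}=1$, the affine functions $\overline{q}_{i\ell}(0,\cdot)$ are $1$-Lipschitz in $(x,y)$, so $\overline{q}_{ij}(0,x,y)\le R$ and $\overline{q}_{ik}(0,x,y)\le R$. A direct computation, just as for $\varphi$ in Lemma~\ref{phi-alphat0}, gives the identity
\[q_{ij}\bigl(0,\alpha x,\psi_i(0,\alpha x)\bigr)=\alpha\,\overline{q}_{ij}(0,x,y)+\bigl(\gamma_{ij}\sin\theta_i-\lambda_{ij}\sin\theta_j\bigr)\bigl(\alpha y-\psi_i(0,\alpha x)\bigr),\]
and likewise for $k$; with $0<\gamma_{ij}\sin\theta_i-\lambda_{ij}\sin\theta_j\le1$ and the two bounds above, each of these is $\le\alpha R+\alpha\,|y-\tfrac1\alpha\psi_i(0,\alpha x)|$. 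Since $\sum_{\ell=1}^{n}e^{-q_{i\ell}(0,\alpha x,\psi_i(0,\alpha x))}=1$ with $n\ge2$ positive summands, the summands indexed by $j$ and $k$ force $2e^{-(\alpha R+\alpha|y-\frac1\alpha\psi_i(0,\alpha x)|)}\le1$, hence
\[\Bigl|y-\tfrac1\alpha\psi_i(0,\alpha x)\Bigr|\ \ge\ \tfrac1\alpha\ln 2-R\ \longrightarrow\ +\infty\quad\text{uniformly over }\{d((x,y),\overline{\mathcal{R}}_0)\le R\}\ \text{as }\alpha\to0^+.\]

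It remains to fix the sign. First, $\sum_{\ell}e^{-q_{i\ell}}=1$ with positive summands forces every summand to be $<1$, i.e.\ every $q_{i\ell}(0,\alpha x,\psi_i(0,\alpha x))>0$; since each $q_{i\ell}(0,\alpha x,\cdot)$ is decreasing in $y$ and vanishes at some $y=y_{i\ell}$, this yields $\psi_i(0,\alpha x)\le\min_{\ell}y_{i\ell}=\psi_i^{\mathrm{poly}}(0,\alpha x)$, where $\psi_i^{\mathrm{poly}}$ is the piecewise affine graph of $\partial\mathcal{Q}_i$ in the $y$-direction. Because $\psi_i^{\mathrm{poly}}$ carries the constants $\tau_\ell$ only through the offsets $\alpha\tau_\ell$, it is $\alpha$-homogeneous: $\psi_i^{\mathrm{poly}}(0,\alpha x)=\alpha\,\overline{\psi}_i(x)$, where $\overline{\psi}_i$ is the (Lipschitz) graph of $\partial\overline{\mathcal{P}}$ at $t=0$. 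When $d((x,y),\overline{\mathcal{R}}_0)\le R$, using $\overline{\mathcal{R}}_0\subset\{y=\overline{\psi}_i\}$ and the Lipschitz bound for $\overline{\psi}_i$ we obtain $y\ge\overline{\psi}_i(x)-CR$ for a universal $C$, hence $\tfrac1\alpha\psi_i(0,\alpha x)\le\overline{\psi}_i(x)\le y+CR$, so $y-\tfrac1\alpha\psi_i(0,\alpha x)\ge-CR$ uniformly in $\alpha$. Combining this lower bound with the previous display (for $\alpha$ so small that $\tfrac1\alpha\ln2-R>CR$) forces $y-\tfrac1\alpha\psi_i(0,\alpha x)\ge\tfrac1\alpha\ln2-R\to+\infty$ uniformly, which is the claim.

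The genuinely delicate point is the sign determination: one must correctly place $\mathcal{Q}_i$ \emph{below} its boundary graph in the $y$-direction — the single sign reversal relative to Lemma~\ref{phi-alphat0}, which traces back to $\gamma_{ij}\sin\theta_i-\lambda_{ij}\sin\theta_j>0$ — and then exploit the $\alpha$-homogeneity of $\psi_i^{\mathrm{poly}}$ near the ridge set. Everything else is a straightforward rescaling of the proof of Lemma~\ref{phi-alphat0}, with $\gamma_{ij}\sin\theta_i-\lambda_{ij}\sin\theta_j$ playing the role of $\sin\theta_i$.
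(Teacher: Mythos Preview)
Your proof is correct and follows exactly the approach the paper intends: it is the natural adaptation of the proof of Lemma~\ref{phi-alphat0}, with $q_{ij}$ replacing $q_i$, $\gamma_{ij}\sin\theta_i-\lambda_{ij}\sin\theta_j$ replacing $\sin\theta_i$, and the sign reversed because the smooth surface $\{y=\psi_i\}$ lies \emph{below} the piecewise-affine graph of $\partial\mathcal{Q}_i$ rather than above it. Your sign-determination step is written out more carefully than the paper's terse one-line version in Lemma~\ref{phi-alphat0}, but the underlying geometric reason---that $\psi_i(0,\alpha x)\le\alpha\overline{\psi}_i(x)$ while $(x,y)$ near $\overline{\mathcal{R}}_0$ satisfies $y\ge\overline{\psi}_i(x)-CR$---is precisely the mirror image of the paper's observation that $\varphi(0,\alpha x)>\alpha y$ near the ridge.
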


\begin{lemma}\label{u^-}
For any $\alpha>0$ small enough, there exist $\omega>0$ and $\mu>0$ such that for $\delta\in (0,\sigma]$, $u_i^-$ is a subsolution of \eqref{CP} and
\be\label{comparison with subsolution}
u_i^-(t,x,y)\le u(t,x,y), \hbox{ for $t\ge 0$ and $(x,y)\in\R^N$}.
\ee
\end{lemma}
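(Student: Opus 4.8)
The proof follows the template of Lemma~\ref{u+}, now on the subsolution side. The plan is: (1) show that $u_{i,\ast}$ satisfies hypotheses (i), (ii), (iii)$'$ of Lemma~\ref{supsubsolution}, so that $u_i^-$ is a genuine subsolution of \eqref{CP}; (2) verify $u_i^-(0,\cdot)\le u_0$ on all of $\R^N$; (3) invoke the parabolic comparison principle.

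For step (1), hypothesis (iii)$'$ is precisely the content of Lemma~\ref{asy-ui}, valid for $\varepsilon<\varepsilon_0$ and $\alpha\le\alpha(\varepsilon)$. For (i) and (ii), differentiate $u_{i,\ast}=g(\xi_{i,\ast})-\varepsilon\hat h_i$ on $\{u_{i,\ast}>0\}$: since $\partial_t\xi_{i,\ast}=-\partial_t\psi_i/\sqrt{1+|\nabla\psi_i|^2}+O(\alpha\hat h_i|\xi_{i,\ast}|)$ and, by Lemma~\ref{pro-xi-i}, $\partial_t\psi_i/\sqrt{1+|\nabla\psi_i|^2}$ lies in a fixed interval $[c',c_f)$ with $c'>0$, one has $\partial_t\xi_{i,\ast}<0$, and after multiplying by $g'\le 0$ and absorbing the errors via $|g'(\xi)\xi|\le M$ from \eqref{M} and $|\partial_t\hat h_i|\le 2c_f\hat h_i$ one gets $\partial_t u_{i,\ast}\ge -c'g'(\xi_{i,\ast})/2-C\alpha$ for $\alpha$ small; on $\{\delta\le u_{i,\ast}\le 1-\delta\}$ one has $-g'(\xi_{i,\ast})\ge k_\delta>0$, giving (i), and on the complement $-g'(\xi_{i,\ast})\ge 0$, giving (ii) once $\alpha$ is small (exactly as in Lemma~\ref{u+}, with Lemma~\ref{pro-xi-i} replacing Lemma~\ref{pro-xi}). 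Lemma~\ref{supsubsolution} then furnishes $\omega>0$ for which $u_i^-$ is a subsolution of \eqref{RDxy} on $\R^+\times\R^N$.

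For step (2), fix $\delta\in(0,\sigma]$ and use \eqref{initial} to get $R$ with $|u_0-\underline u(0,\cdot)|\le\delta$ on $\{d((x,y),\mathcal R_0)\ge R\}$; there $u_0(x,y)\ge\underline u(0,x,y)-\delta\ge g(p_i(0,x,y))-\delta>u_{i,\ast}(0,x,y)-\delta$ by Lemma~\ref{asy-ui}(i), and $u_0\ge 0$, so $u_i^-(0,x,y)=\max\{u_{i,\ast}(0,x,y)-\delta,0\}\le u_0(x,y)$. On $\{d((x,y),\mathcal R_0)\le R\}$ the claim is $u_i^-(0,x,y)=0$, i.e.\ $u_{i,\ast}(0,x,y)\le\delta$. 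Here one exploits that $\mathcal P$ and $\overline{\mathcal P}_i$ share the facet $\widetilde P_i$ and that their ridges coincide on it ($\overline P_{ii}\cap\overline P_{ij}=\widetilde P_{i,0}\cap\widetilde P_{j,0}$): the part of $\mathcal R_0$ within bounded distance of $\widetilde P_{i,0}$ lies within bounded distance of $\overline{\mathcal R}_0$, where Lemma~\ref{psi-alphat0} forces $\xi_{i,\ast}(0,x,y)\to+\infty$ uniformly as $\alpha\to 0^+$; on the remainder one stays bounded away from $\widetilde P_{i,0}$ while remaining at bounded distance from $\mathcal P$, so $\min_j\{-\gamma_{ij}p_i+\lambda_{ij}p_j\}(0,x,y)\le-\rho(\varepsilon)$ and Lemma~\ref{asy-ui}(ii) gives $u_{i,\ast}(0,x,y)\le(n^2+1)\varepsilon\le\delta$. (One may first reduce to the case $\operatorname{span}\{e_m\}_{m=1}^n=\R^N$, since otherwise both $\mathcal P$ and $\overline{\mathcal P}_i$ are cylinders along a common perpendicular and the statement descends to a lower dimension.) Thus $u_i^-(0,\cdot)\le u_0$ everywhere.

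For step (3), $u_i^-$ is a subsolution and $u$ a solution of \eqref{CP} with $u_i^-(0,\cdot)\le u(0,\cdot)$, so the comparison principle yields \eqref{comparison with subsolution}. The main difficulty is step (2) on the bounded neighbourhood of $\mathcal R_0$: the hypothesis on $u_0$ is stated through the ridges $\mathcal R_0$ of $\mathcal P$, whereas Lemma~\ref{psi-alphat0} controls the ridges $\overline{\mathcal R}_0$ of $\overline{\mathcal P}_i$, and reconciling the two is where the facet-sharing geometry together with Lemma~\ref{asy-ui}(ii) must be used carefully.
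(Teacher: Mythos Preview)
Your proposal is correct and follows essentially the same route as the paper: verify (i), (ii), (iii)$'$ of Lemma~\ref{supsubsolution} for $u_{i,\ast}$ via Lemma~\ref{pro-xi-i} and Lemma~\ref{asy-ui}, check $u_i^-(0,\cdot)\le u_0$ by a case decomposition exploiting the shared facet $\widetilde P_{i,0}=\overline P_{ii,0}$ together with Lemma~\ref{psi-alphat0}, and conclude by comparison.

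The only difference worth noting is the organization of the initial-data comparison. The paper first splits off the region $\{p_i(0,x,y)\ge R\}$, where $u_{i,\ast}\le g(p_i)\le\delta$ directly (using Lemma~\ref{asy-ui}(i)), and the region $\{\min_j p_j(0,x,y)\le -R\}$, where $d(\cdot,\mathcal R_0)\ge R$ and \eqref{initial} applies; the remainder is then automatically contained in a bounded neighborhood of $\widetilde P_{i,0}$, and is handled exactly as in your B1/3b via Lemma~\ref{psi-alphat0}. Your alternative, invoking Lemma~\ref{asy-ui}(ii) in region B2, ultimately reduces to the same observation---being near $\mathcal R_0$ but far from $\widetilde P_{i,0}$ forces $p_i$ large, hence $q_{ii}=-p_i\le -\rho(\varepsilon)$---so once you have that, using $u_{i,\ast}\le g(p_i)\le\delta$ directly (as the paper does) is a little cleaner and avoids having to couple $\varepsilon$ to $\delta$ through the inequality $(n^2+1)\varepsilon\le\delta$. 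The parenthetical reduction to the case $\operatorname{span}\{e_m\}=\R^N$ is not needed and does not appear in the paper.
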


\begin{proof}
To verify the subsolution property, by the definition of $u_i^-$,  we need only to restrict our attention to the domain where  $u_{i, \ast}(t,x,y)=g(\xi_{i,\ast}(t,x,y))+\varepsilon \hat{h}_i(\alpha t,\alpha x)$. By Lemma~\ref{pro-xi-i},
\[\partial_t u_{i,\ast}=g^\prime(\xi_i)\partial_t\xi_i +\varepsilon \alpha \partial_t\hat{h}_i(\alpha t,\alpha x)\ge -c_f g^\prime(\xi_i)-\alpha C\hat{h}_i(\alpha t,\alpha x).\]
As in the proof of Lemma \ref{u+}, if $\alpha$ is small enough, then   $u_{i, \ast}$ satisfies (i)-(ii) of Lemma~\ref{supsubsolution}. By Lemma~\ref{asy-ui}, (iii)' of Lemma~\ref{supsubsolution} is also satisfied by $u_{i, \ast}$. Hence $u_i^-$ is a subsolution of \eqref{CP}.

Next let us check the initial value condition. Since $u(0,x,y)=u_0(x,y)$ satisfies \eqref{initial}, there is an $R>0$ such that 
\[0<g(\xi)\le \delta \hbox{ for $\xi\ge R$} \quad \hbox{ and}  \quad 1-\delta\le g(\xi)<1 \hbox{ for $\xi\le -R$}\]
and
\[|u_0(x,y)-\underline{u}(0,x,y)|\le \delta, \hbox{ for $d((x,y),\mathcal{R}_0)\ge R$}.\]

In $\{x\cdot \nu_i \cos\theta_i +y\sin\theta_i +\tau_i\ge R\}$, 
\[u_{i, \ast}(0,x,y)\le g(x\cdot \nu_i \cos\theta_i +y\sin\theta_i +\tau_i)\le \delta.\]
Thus  
\[u_i^-(0,x,y)=\max\{u_{i, \ast}(0,x,y)-\delta,0\}= 0\le u_0(x,y).\]

In $\{\min_{1\le j\le n}\{x\cdot \nu_j \cos\theta_j +y\sin\theta_j +\tau_j\}\le -R\}$, we have $d((x,y),\mathcal{R}_0)\ge R$. Hence 
\begin{align*}
u_0(x,y)\ge \max\{\underline{u}(0,x,y)-\delta,0\}&\ge \max\{g(x\cdot \nu_i \cos\theta_i +y\sin\theta_i +\tau_i)-\delta,0\}\\
&\ge \max\{u_{i, \ast}(0,x,y)-\delta,0\}=u_i^-(0,x,y).
\end{align*}

Notice that there is $R_1>0$ such that 
\begin{align*}
&\R^N\setminus \left(\{x\cdot \nu_i \cos\theta_i +y\sin\theta_i +\tau_i\ge R\}\cup\{\min_{1\le j\le n}\{x\cdot \nu_j \cos\theta_j +y\sin\theta_j +\tau_j\}\le -R\}\right)\\
\subset& \{d((x,y),\widetilde{P}_{i,0})\le R_1\}.
\end{align*}
Hence there is only one part left, that is, $\{d((x,y),\widetilde{P}_{i,0})\le R_1\}$.

For $(x,y)\in\R^N$ such that $d((x,y),\widetilde{P}_{i,0})\le R_1$ and $d((x,y),\mathcal{R}_0)\ge R_1$, one still has that
\begin{align*}
u_0(x,y)\ge \max\{\underline{u}(0,x,y)-\delta,0\}&\ge \max\{g(x\cdot \nu_i \cos\theta_i +y\sin\theta_i +\tau_i)-\delta,0\}\\
&\ge \max\{u_{i, \ast}(0,x,y)-\delta,0\}=u^-(0,x,y).
\end{align*}
By Lemma~\ref{psi-alphat0}, one can take $\alpha$ small enough such that
\[u_{i, \ast}(0,x,y)\le \delta, \hbox{ for $d((x,y),\overline{\mathcal{R}}_0)\le R_1$}.\]
One can easily show that $\overline{P}_{i,0}=\widetilde{P}_{i,0}$ and hence $\overline{\mathcal{R}}_0$ contains the ridges $\mathcal{R}_{ij,0}$. Then for any $(x,y)$ satisfying  $d((x,y),\widetilde{P}_{i,0})\le R_1$ and $d((x,y),\mathcal{R}_0)\le R_1$, 
\[u_i^-(0,x,y)=\max\{u_{i, \ast}(0,x,y)-\delta,0\}= 0\le u_0(x,y) .\]

In conclusion, $u_i^-(0,x,y)\le u_0(x,y)$ for all $(x,y)\in\R^N$. Then \eqref{comparison with subsolution} follows from the comparison principle.
\end{proof}
\vskip 0.3cm

\subsection{Completion of the proof}
In this subsection, we prove the following proposition. 
Since equation \eqref{RD} is invariant under rotations, the stability of $U_{e_i,\tau_i}$, i.e. Theorem~\ref{Th2}, is a direct consequence of  this proposition.
\begin{proposition}\label{Proposition3}
For the Cauchy problem \eqref{CP}-\eqref{initial}, it holds that
\[\lim_{t\to+\infty}\sup_{(x,y)\in \R^N}|u(t,x,y)-U(t,x,y)|= 0.\]
\end{proposition}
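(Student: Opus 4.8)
The plan is a squeezing argument built on the sub‑ and supersolutions $u^{+}$ and $u_i^{-}$ of the previous subsections, completed by a space–time blow‑up/compactness argument near the interface together with the uniqueness Proposition~\ref{uniqueness}.

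First I would record the global squeeze and extract from it the easy (``away from the ridges'') part. By Lemmas~\ref{u+} and~\ref{u^-}, $u_i^{-}(t,x,y)\le u(t,x,y)\le u^{+}(t,x,y)$ on $\R^{+}\times\R^{N}$ for every $i$, while $\underline{u}<U<\overline{u}$ everywhere. As $t\to+\infty$ the parameter $\delta e^{-\mu t}\to0$ and the time–shift $\omega\delta(1-e^{-\mu t})\to\omega\delta$ (a fixed bounded number); since $\partial_t\overline{u}$ and $\partial_tu_{i,\ast}$ are bounded by the estimates of Lemmas~\ref{pro-xi}, \ref{pro-xi-i} and~\eqref{M}, it follows that $\sup_{x,y}|u^{+}(t,\cdot)-\overline{u}(t+\omega\delta,\cdot)|\to0$ and $\sup_{x,y}|u_i^{-}(t,\cdot)-u_{i,\ast}(t-\omega\delta,\cdot)|\to0$. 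Feeding this into Corollary~\ref{coro:rhoe} (for $\overline{u}$), Lemma~\ref{asy-ui}(iii) (for $u_{i,\ast}$) and the asymptotics of $U$ (Proposition~\ref{Proposition1}, Remark~\ref{rmk 2.5}), one checks that for every $\eta>0$ there are $\rho_{0}>0$ and $T$ such that $|u(t,x,y)-U(t,x,y)|<\eta$ whenever $t\ge T$ and $d((t,x,y),\mathcal{R})\ge\rho_{0}$, and that $u,U$ are both within $\eta$ of $\{0,1\}$ once $d((t,x,y),\Gamma)$ is large. Hence any failure of the conclusion must occur within a bounded neighbourhood of $\mathcal{R}$.

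Next I would argue by contradiction. If $\sup_{x,y}|u(t,x,y)-U(t,x,y)|\not\to0$, choose $\eta>0$ and $(t_{k},x_{k},y_{k})$ with $t_{k}\to+\infty$ and $|u(t_{k},x_{k},y_{k})-U(t_{k},x_{k},y_{k})|\ge\eta$; by the previous step $d((t_{k},x_{k},y_{k}),\mathcal{R})$ and $d((t_{k},x_{k},y_{k}),\Gamma)$ stay bounded. Translating, set $\tilde u_{k}(t,x,y)=u(t+t_{k},x+x_{k},y+y_{k})$ and $\tilde U_{k}(t,x,y)=U(t+t_{k},x+x_{k},y+y_{k})$; both solve the autonomous equation~\eqref{RDxy}, so along a subsequence they converge in $C^{1,2}_{\mathrm{loc}}$ to entire solutions $\tilde u_{\infty},\tilde U_{\infty}$ with $|\tilde u_{\infty}(0,0,0)-\tilde U_{\infty}(0,0,0)|\ge\eta$. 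Writing $p_i(t,x,y)=x\cdot\nu_i\cos\theta_i+y\sin\theta_i-c_ft+\tau_i$, only finitely many of the $p_i(t_{k},x_{k},y_{k})$ remain bounded (the index set $I$, nonempty since it contains the two indices of the nearest ridge); put $c_i=\lim_k p_i(t_{k},x_{k},y_{k})$ for $i\in I$. Then $\underline{u}(\cdot+(t_{k},x_{k},y_{k}))\to\underline{u}_{\infty}:=\max_{i\in I}g(p_i+c_i)$ locally uniformly, and passing to the limit in the squeeze shows $\tilde U_{\infty}$ is an entire solution of~\eqref{RDxy} with $\underline{u}_{\infty}\le\tilde U_{\infty}<1$ and $|\tilde U_{\infty}-\underline{u}_{\infty}|\to0$ away from the ridges of the $I$–configuration; by Proposition~\ref{uniqueness} (if $|I|\ge2$) or by uniqueness of the planar front of~\cite{FM} (if $|I|=1$), $\tilde U_{\infty}$ equals the entire solution $U_{(\nu_i,\theta_i,\tau_i+c_i)_{i\in I}}$. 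The same limiting procedure applied to $u_i^{-}\le u\le u^{+}$ shows $\tilde u_{\infty}$ is trapped, away from these ridges, between $\max_{i\in I}g(p_i+c_i+c_f\omega\delta)$ and $\max_{i\in I}g(p_i+c_i-c_f\omega\delta)$; comparing (by the sliding method, as in Proposition~\ref{uniqueness}) with the corresponding shifted entire solutions and invoking the monotone, continuous dependence of $U$ on the shift parameters (Proposition~\ref{pro continuity}, Theorem~\ref{Th1}(2)) yields $\sup|\tilde u_{\infty}-\tilde U_{\infty}|\le C\delta$ with $C$ independent of $\delta$. Since $\omega=(\mu+L)/(\mu k)$ does not depend on $\delta$, one may fix $\delta$ small at the outset so that $C\delta<\eta$, contradicting $|\tilde u_{\infty}(0,0,0)-\tilde U_{\infty}(0,0,0)|\ge\eta$; this proves Proposition~\ref{Proposition3}.

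The main obstacle is exactly this ridge–region step. Away from $\mathcal{R}$ the supersolution and the subsolutions already pin $u$ to within $O(\varepsilon+\delta)$ of $U$, but inside a bounded neighbourhood of $\mathcal{R}$ the gap $u^{+}-\max_i u_i^{-}$ does not close (there $u^{+}\approx1$ while $u_i^{-}\approx0$), so the squeeze alone is useless and one must pass to a \emph{space–time} translated limit — a pure time translation sees nothing, because the interface $\Gamma_t$ itself runs off to spatial infinity as $t\to+\infty$. The delicate bookkeeping is to determine, for every possible sequence of blow‑up points along the moving interface, which finite sub‑collection of planar fronts survives in the limit and with which shifts, so that the limiting configuration is again one covered by Theorem~\ref{Th1} and Proposition~\ref{uniqueness} applies; making the sliding comparison work for the slightly perturbed asymptotics produced by the $\pm c_f\omega\delta$ shifts is the remaining technical point.
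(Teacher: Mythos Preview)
Your proposal follows essentially the same route as the paper --- squeeze via $u^{+}$ and $u_{i}^{-}$, contradiction along a sequence $(t_{k},x_{k},y_{k})$ with bounded distance to $\mathcal{R}$, compactness, identification of the surviving index set $I$, and uniqueness. The one place you diverge is in how you close the argument: you keep $\delta$ fixed, pass to the limit, and then try to absorb the residual $\pm c_{f}\omega\delta$ shift via sliding and continuous dependence of $U$ on the $\tau_{i}$. The paper instead uses that the squeeze \eqref{cp4.1} holds for \emph{every} small $\varepsilon$ (and correspondingly small $\delta$), which yields \eqref{A1-CP} for all $\varepsilon$; after passing to the translational limit one therefore sees directly that $\tilde u_{\infty}$ satisfies the \emph{exact} asymptotic condition $|\tilde u_{\infty}-u_{I,\ast}|\to 0$ as $d((t,x,y),\mathcal{R}_{\infty})\to\infty$. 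Proposition~\ref{uniqueness} then gives $\tilde u_{\infty}=U_{I}$ outright, and since $U(\cdot+(t_{k},x_{k},y_{k}))\to U_{I}$ as well, the contradiction is immediate. This bypasses your sliding/continuity detour entirely.

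One concrete concern with your version: you assert that $\omega=(\mu+L)/(\mu k)$ does not depend on $\delta$, but in Lemma~\ref{supsubsolution} the constant $k$ is \emph{defined} as $\inf_{\{\delta\le u^{\ast}\le 1-\delta\}}\partial_{t}u^{\ast}$, which a priori does depend on $\delta$ (there is a notational collision with the fixed $k$ from the Notations section). So the step ``fix $\delta$ small at the outset so that $C\delta<\eta$'' needs justification --- you would have to check that $\omega(\delta)\delta\to 0$, or better, replace this by the paper's device of letting $\varepsilon\to 0$ \emph{after} the compactness limit, which identifies $\tilde u_{\infty}$ exactly and makes the whole issue disappear.
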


\begin{proof}
By Lemmas~\ref{u+} and \ref{u^-} (with suitably chosen $\varepsilon$, $\alpha$ satisfying the hypothesis in these lemmas), we have 
\[u_i^-(t,x,y)\le u(t,x,y)\le u^+(t,x,y), \hbox{ for every $i\in\{1,\cdots,n\}$, $t\ge 0$ and $(x,y)\in\R^N$}.\]
It implies that for every $i\in\{1,\cdots,n\}$, $t\ge 0$ and $(x,y)\in\R^N$,
\be\label{cp4.1}
u_{i, \ast}(t,x,y)-\|\partial_t\underline{u}_i\|_{\infty}\omega \delta -\delta\le u(t,x,y)\le \overline{u}(t,x,y)+\|\partial_t\overline{u}\|_{\infty}\omega\delta+\delta.
\ee
Thus by taking $\delta>0$ small enough, we can make sure that for every $i\in\{1,\cdots,n\}$, $t\ge 0$ and $(x,y)\in\R^N$,
\[u_{i, \ast}(t,x,y)-\varepsilon\le u(t,x,y)\le \overline{u}(t,x,y)+\varepsilon.\]
 Then by Lemmas~\ref{lem comparison of sup and sub} and \ref{asy-ui}, for any $\varepsilon>0$, there exists an $\rho(\varepsilon)>0$ such that in $\{d((t,x,y),\mathcal{R})\ge \rho(\varepsilon)\}$, 
\be\label{A1-CP}
|u(t,x,y)-\underline{u}(t,x,y)|\le (n^2+1)\varepsilon.
\ee

To complete the proof, assume by the contrary that there exists an $\varepsilon>0$ and a sequence $\{(t_k,x_k,y_k)\}$ such that  as $k\rightarrow +\infty$, $t_k\rightarrow +\infty$ but
\be\label{nconv}
|u(t_k,x_k,y_k)-U(t_k,x_k,y_k)|\geq (n^2+2)\varepsilon.
\ee
By \eqref{A1-CP}, 
\[\limsup_{k\to+\infty}d((t_k,x_k,y_k),\mathcal{R})\leq \rho(\varepsilon).\]
Divide the set $\{1,\cdots,n\}$ into two subsets $I$ and $J$ such that
$x_k\cdot\nu_i\cos\theta_i+y_k\sin\theta_i-c_ft_k+\tau_i$
is uniformly bounded for $i\in I$, while  for $j\in J$,
\be\label{4.2}
\lim_{k\rightarrow +\infty }\left(x_k\cdot\nu_j\cos\theta_j+y_k\sin\theta_i-c_ft_k+\tau_j\right)=+\infty.
\ee
The set $I$ is not empty, but $J$ could be empty.

After passing to a subsequence of $(t_k,x_k,y_k)$ if necessary, there are $\eta_i\in\R$ ($i\in I$) such that for every $i\in I$,
\[\lim_{k\rightarrow +\infty}\left(x_k\cdot\nu_i\cos\theta_i+y_k\sin\theta_i-c_ft_k+\tau_i\right)= \eta_i.\]
Then $\partial\mathcal{P}-(t_k,x_k,y_k)\rightarrow \partial\mathcal{P}_{\infty}$ where $\mathcal{P}_{\infty}$ is the polytope enclosed by hyperplanes
\[\left\{(t,x,y)\in\R\times\R^N; x\cdot\nu_i\cos\theta_i+y\sin\theta_i-c_f t+\eta_i=0\right\}, \ i\in I.\]
By \eqref{4.2}, for every $j\in J$, $g(x\cdot\nu_j\cos\theta_j+y\sin\theta_j-c_f t+\eta_j)\to 0$. Thus $\underline{u}(t+t_k,x+x_k,y+y_k)$ converge to 
\[u_{I, \ast}(t,x,y):=\max_{i\in I}\left\{g(x\cdot\nu_i\cos\theta_i+y\sin\theta_i-c_f t+\eta_i)\right\}.\]
Let $u_k(t,x,y)=u(t+t_k,x+x_k,y+y_k)$. By parabolic estimates and \eqref{A1-CP},  $u_k$ converge (up to a subsequence) to a solution $u_{\infty}$ of \eqref{RDxy}. By Corollary \ref{coro:rhoe} and \eqref{cp4.1} (with $\delta$ sufficiently small),   if $d((t,x,y),\partial\mathcal{P}_{\infty})\ge \rho(\varepsilon)$, then
\[|u_{\infty}(t,x,y)-u_{I, \ast}(t,x,y)|\le (n^2+1)\varepsilon,\]
and if   $d((t,x,y),\mathcal{R}_{\infty})\ge \rho(\varepsilon)$, then
\[
|u_{\infty}(t,x,y)-u_{I, \ast}(t,x,y)|\le \varepsilon.
\]
Since $\varepsilon$ is arbitrary, we get
\begin{align*}
|u_{\infty}(t,x,y)-u_{I, \ast}(t,x,y)|\rightarrow 0, &\hbox{ uniformly as $d((t,x,y),\mathcal{R}_{\infty})\rightarrow +\infty$}.
\end{align*}
By Lemma~\ref{uniqueness},
$u_{\infty}\equiv U_I$ on  $\R\times\R^N$,
where $U_I:=U(\cdot; (e_i,\tau_i)_{i\in I})$. On the other hand, by Proposition \ref{Proposition1} and \eqref{4.2}, $U(t+t_k,x+x_k,y+y_k)$ also converge to $U_I$ as $k\rightarrow +\infty$. This is a contradiction with \eqref{nconv}.
\end{proof}

\section{Characterization of transition fronts}

In this section, we prove Theorem~\ref{Th3}. Throughout this section, $u$ denotes a transition front connecting $0$ and $1$. Recall that we assume there is $T\in\R$ such that for any $t\le T$, there are $n$ vectors $e_i$  (independent of $t$) and constants $\xi_t^i$ ($i=1,2,\cdots,n$) such that the interface $\Gamma_t$ of $u$ is given by 
\be\label{interface}
\Gamma_t=\left\{x\in\R^N;\ \min_{1\le i\le n}\{x\cdot e_i +\xi_t^i\}=0\right\}\neq \emptyset.
\ee
In this section, we use $P(e_i,\xi^i_t)$ to denote the hyperplane $\left\{x\in\R^N; x\cdot e_i+\xi^i_t=0\right\}$.

\begin{lemma}\label{Gtsub}
Assume 
\begin{itemize}
    \item $\Omega_{t_0}^{\pm}=\left\{x\in\R^N; \min_{1\le i\le n}\{x\cdot e_i +\xi^i_{t_0}\}\lessgtr 0\right\}$ for some $t_0\le T$;
    \item there is $e_0\in\mathbb{S}^{N-1}$ such that $e_i\cdot e_0>0$ for all $i\in\{1,\cdots,n\}$.
\end{itemize}
 Then  there is $M>0$ such that for any $t\ge t_0$,
\[\Gamma_t\subset \left\{x\in\R^N; -M\le \min_{1\le i\le n}\{x\cdot e_i+\xi^i_{t_0} -c_f(t-t_0)\}\le M\right\}.\]
\end{lemma}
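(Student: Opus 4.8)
The plan is to trap $u$, for $t\ge t_0$, between two time‑shifted copies of the entire solution $U$ furnished by Theorem~\ref{Th1}, and then convert the resulting pointwise bounds on $u$ into a bound on a level set of $u$, which lies at bounded Hausdorff distance from $\Gamma_t$. First I would record what the transition‑front structure yields at $t=t_0$. Fix $\delta\in(0,\sigma]$. Since the sets in question are intersections of half‑spaces, one has the elementary distance bound
\begin{equation*}
d\big(x,\{z:\textstyle\min_{1\le i\le n}\{z\cdot e_i+\xi^i_{t_0}\}=0\}\big)\ \ge\ \big|\textstyle\min_{1\le i\le n}\{x\cdot e_i+\xi^i_{t_0}\}\big|,\qquad x\in\R^N ,
\end{equation*}
obtained by testing against the half‑space carried by an active index. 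Combined with \eqref{eq1.7} (applied to $u$ at $t_0$ with $\epsilon=\delta$, giving a constant $M_\delta$), this yields $u(t_0,x)\le\delta$ on $\{\min_i\{x\cdot e_i+\xi^i_{t_0}\}\ge M_\delta\}$ and $u(t_0,x)\ge 1-\delta$ on $\{\min_i\{x\cdot e_i+\xi^i_{t_0}\}\le -M_\delta\}$. The same distance bound, applied to the moving polytopes attached to $U$, shows that for every $\eta>0$ there is a constant $M^U_\eta>0$ depending only on $\eta$ and on $(e_1,\dots,e_n)$ (not on the shifts, by translation invariance) such that $U(s,x;e_i,\rho_i)\le\eta$ whenever $\min_i\{x\cdot e_i-c_fs+\rho_i\}\ge M^U_\eta$, and $U(s,x;e_i,\rho_i)\ge 1-\eta$ whenever $\min_i\{x\cdot e_i-c_fs+\rho_i\}\le -M^U_\eta$.

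For the upper bound I would set $A:=-(M_\delta+M^U_\delta)$ and take $u^\ast(s,x):=U(s,x;e_i,\xi^i_{t_0}+A)$. This $u^\ast$ satisfies the hypotheses of Lemma~\ref{supsubsolution}: (iii) is immediate since $U$ solves \eqref{RD}, while (i)--(ii) follow from Lemma~\ref{Ut}, because $\{\delta\le U\le 1-\delta\}$ lies in a bounded neighbourhood of $\partial\mathcal P_s$ on which $\partial_tU$ is bounded below, and $\partial_tU>0$ everywhere. Hence, with $\omega$ as in Lemma~\ref{supsubsolution},
\begin{equation*}
u^+(s,x):=\min\Big\{U\big(s+\omega\delta(1-e^{-\mu s}),x;e_i,\xi^i_{t_0}+A\big)+\delta e^{-\mu s},\ 1\Big\}
\end{equation*}
is a supersolution of \eqref{RD} on $\{s>0\}$. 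At $s=0$ the choice of $A$ forces $U(0,\cdot;e_i,\xi^i_{t_0}+A)\ge 1-\delta$ on $\{\min_i\{x\cdot e_i+\xi^i_{t_0}\}<M_\delta\}$, while $u(t_0,\cdot)\le\delta$ on the complement, so $u^+(0,\cdot)\ge u(t_0,\cdot)$ on all of $\R^N$. The comparison principle on $[t_0,\infty)\times\R^N$ (reading $s=t-t_0$) then gives $u(t,x)\le U\big(t-t_0+\omega\delta(1-e^{-\mu(t-t_0)}),x;e_i,\xi^i_{t_0}+A\big)+\delta e^{-\mu(t-t_0)}$ for $t\ge t_0$; since $\delta\le\sigma<1/8$, the bound on $U$ recalled above (with $\eta=\delta$) makes the right‑hand side $<1/2$ as soon as $\min_i\{x\cdot e_i+\xi^i_{t_0}-c_f(t-t_0)\}\ge M_1$, where $M_1:=M_\delta+2M^U_\delta+c_f\omega\delta$ is finite. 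A symmetric argument, using the subsolution of Lemma~\ref{supsubsolution} built from $U(\cdot;e_i,\xi^i_{t_0}+A')$ with $A':=M_\delta+M^U_\delta$, shows $u(t,x)>1/2$ whenever $\min_i\{x\cdot e_i+\xi^i_{t_0}-c_f(t-t_0)\}\le -M_2$ for an analogous finite constant $M_2$.

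It then remains to pass from $u$ to $\Gamma_t$. The two bounds give $\{x:u(t,x)=\tfrac12\}\subset\{x:-M_2<\min_i\{x\cdot e_i+\xi^i_{t_0}-c_f(t-t_0)\}<M_1\}$ for $t\ge t_0$. On the other hand, \eqref{eq1.7} with $\epsilon=\tfrac14$ places $\{u(t,\cdot)=\tfrac12\}$ within distance $M_{1/4}$ of $\Gamma_t$, and \eqref{eq1.4} with $\epsilon=\tfrac14$ produces $r>0$, uniform in $t$, such that every $x\in\Gamma_t$ lies within distance $r$ of a point where $u(t,\cdot)\ge\tfrac34$ and of a point where $u(t,\cdot)\le\tfrac14$; the intermediate value theorem along the segment joining these two points puts $x$ within distance $r$ of $\{u(t,\cdot)=\tfrac12\}$. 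Hence $d_H\big(\Gamma_t,\{u(t,\cdot)=\tfrac12\}\big)\le D:=\max\{M_{1/4},r\}$ for all $t\ge t_0$, and since $x\mapsto\min_i\{x\cdot e_i+\xi^i_{t_0}-c_f(t-t_0)\}$ is $1$‑Lipschitz, $\Gamma_t\subset\{-M\le\min_i\{x\cdot e_i+\xi^i_{t_0}-c_f(t-t_0)\}\le M\}$ with $M:=\max\{M_1,M_2\}+D$. (One may assume the $e_i$ pairwise distinct by merging redundant indices; when $n=1$, $U$ is just the planar front $g$ and the argument is unchanged.)

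The main obstacle will be the comparison at the single time $t=t_0$: an arbitrary transition front $u$ cannot be ordered pointwise against $U$ by any small spatial or temporal shift, because near $\Gamma_{t_0}$ the transition layers of $u$ and of the shifted $U$ would overlap and compete. I would overcome this exactly as above, by shifting the comparison solution far enough — by about $M_\delta+M^U_\delta$ — that its $\{\ge 1-\delta\}$ (resp.\ $\{\le\delta\}$) region engulfs the entire non‑small (resp.\ non‑large) part of $u(t_0,\cdot)$, and by using the decaying/waiting device of Lemma~\ref{supsubsolution} to restore the super‑/subsolution property after the addition of the constant $\delta$; that shift is precisely what reappears in the final width $M$.
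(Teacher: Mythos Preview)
Your proof is correct and follows the same overall strategy as the paper---build sub- and supersolutions for $t\ge t_0$ via Lemma~\ref{supsubsolution} starting from far-shifted comparison solutions, then read off that $\Gamma_t$ stays in a slab of the moving polytope---but there are two implementation differences worth noting. First, for the lower bound the paper uses the planar fronts $g(x\cdot e_i-c_f(t-t_0)+\xi^i_{t_0}+M_\delta+R)$ directly (one per index, then takes the maximum), rather than a second shifted copy of $U$; this is slightly more elementary since it avoids invoking Lemma~\ref{Ut} on that side. Second, to pass from bounds on $u$ to $\Gamma_t$, the paper quotes \cite[Lemma~4]{GH} to get a uniform $a\in(0,1/2)$ with $a\le u\le 1-a$ on $\Gamma_t$, which immediately forces $\Gamma_t$ into the slab; you instead locate the level set $\{u(t,\cdot)=1/2\}$ in the slab and then control $d_H(\Gamma_t,\{u=1/2\})$ via \eqref{eq1.4} and the intermediate value theorem. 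Your route is more self-contained but a bit longer. One small remark: your claim that $M^U_\eta$ is independent of the shifts ``by translation invariance'' is justified here precisely because you only add the \emph{same} constant $A$ (or $A'$) to all $\xi^i_{t_0}$, and this is a pure time translation of $U$; it would not follow for arbitrary independent shifts without appealing to the uniform estimates in Section~3.
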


\begin{proof}
By \cite[Lemma 4]{GH}, there is $a\in (0,1/2)$ such that
\be\label{a}
a\le u(t,x)\le 1-a, \hbox{ for $t\in\R$ and $x\in\Gamma_t$}.
\ee
Take   any $\delta\in (0,\sigma]$ such that $\delta\le a/2$, where $\sigma$ is defined by \eqref{sigma}. By Definition~\ref{def1.1} and the assumption in this lemma, there exists an $M_{\delta}>0$ such that
\begin{eqnarray*}
\left\{\begin{array}{lll}
1-\delta\le u(t_0,x)<1, && \hbox{if} ~  \min_{1\le i\le n}\{x\cdot e_i+\xi^i_{t_0}\}\le -M_{\delta},\\
0< u(t_0,x)\le \delta, && \hbox{if} ~   \min_{1\le i\le n}\{x\cdot e_i+\xi^i_{t_0}\}\ge M_{\delta}.
\end{array}
\right.
\end{eqnarray*}
Since $g(-\infty)=1$ and $g(+\infty)=0$, there exists an $R>0$ such that $0<g(\xi)\le \delta$ for $\xi\ge R$ and $1-\delta\le g(\xi)<1$ for $\xi\le -R$. Then the following four properties hold.
		\begin{enumerate}
			\item[(1)] If $\min_{1\le i\le n}\left\{x\cdot e_i+\xi^i_{t_0}\right\}\ge -M_{\delta}$, then 
   \[g(x\cdot e_i+\xi^i_{t_0}+M_{\delta}+R)\leq\delta.\]
Hence
			\[g(x\cdot e_i+\xi^i_{t_0}+M_{\delta}+R)-\delta\le 0< u(t_0,x), \hbox{ for every $i\in\{1,\cdots,n\}$}.\]
			
			\item[(2)] If $\min_{1\le i\le n}\left\{x\cdot e_i+\xi^i_{t_0}\right\}\le -M_{\delta}$, then $u(t_0,x)\geq 1-\delta$. Hence
			\[g(x\cdot e_i+\xi^i_{t_0}+M_{\delta}+R)-\delta<1-\delta\le  u(t_0,x), \hbox{ for every $i\in\{1,\cdots,n\}$}.\]
			
			\item[(3)] If $\min_{1\le i\le n}\left\{x\cdot e_i+\xi^i_{t_0}\right\}\le M_{\delta}$, then 
   \[   \max_{i=\{1,\cdots,n\}}g(x\cdot e_i +\xi^i_{t_0}-M_{\delta}-R)\ge 1-\delta.\]
		Hence
  \be\label{g>u1}
   \max_{i=\{1,\cdots,n\}}g(x\cdot e_i +\xi^i_{t_0}-M_{\delta}-R)+\delta\ge 1>u(t_0,x).
   \ee
			
			\item[(4)] If $\min_{1\le i\le n}\left\{x\cdot e_i+\xi^i_{t_0}\right\}\ge M_{\delta}$, then $u(t_0,x)\leq \delta$. Hence
			\be\label{g>u2}
   \max_{i=\{1,\cdots,n\}}g(x\cdot e_i +\xi^i_{t_0}-M_{\delta}-R)+\delta>\delta\ge u(t_0,x).\ee
		\end{enumerate} 
		
By Lemma~\ref{supsubsolution}, there exist $\omega>0$ and $\mu>0$ such that for any $i\in\{1,\cdots,n\}$, $t\ge t_0$ and $x\in\R^N$,
\[u(t,x)\ge g\left(x\cdot e_i -c_f(t-t_0)+\xi^i_{t_0}+M_{\delta}+R+\omega\delta e^{-\mu (t-t_0)} -\omega\delta\right)-\delta e^{-\mu (t-t_0)}.\]
Thus for $t\ge t_0$ and $x\in\R^N$,
\[
u(t,x)\ge \max_{i=\{1,\cdots,n\}}g\left(x\cdot e_i -c_f(t-t_0)+\xi^i_{t_0}+M_{\delta}+R+\omega\delta e^{-\mu (t-t_0)} -\omega\delta\right)-\delta e^{-\mu (t-t_0)}.
\]
This together with \eqref{a} implies that if $\min_{1\le i\le n}\left\{x\cdot e_i-c_f(t-t_0)+\xi^i_{t_0}+M_{\delta}+R\right\}\le -R$, then $u(t,x)\ge 1-2\delta\ge 1-a$. In other words, if $x\in\Gamma_t$, then
\be\label{lowb-u}
\min_{1\le i\le n}\left\{x\cdot e_i-c_f(t-t_0)+\xi^i_{t_0}+M_{\delta}+R\right\}\ge -R.
\ee

On the other hand, let $U$ be the entire solution satisfying
\be\label{U>g}
U(t,x)\ge \max_{1\le i\le n}\left\{g\left(x\cdot e_i -c_f (t-t_0)+\xi^i_{t_0}-M_{\delta}-R\right)\right\}, \hbox{ for any $(t,x)\in \R\times\R^N$},
\ee
and 
\[
\left|U(t,x)-\max_{1\le i\le n}\left\{g\left(x\cdot e_i -c_f (t-t_0)+\xi^i_{t_0}-M_{\delta}-R\right)\right\}\right|\rightarrow 0
\]
uniformly as   $d((t,x),\mathcal{R})\rightarrow +\infty$, where $\mathcal{R}$ is the set of all ridges of $\mathcal{P}(P(e_i,c_f (t-t_0)+\xi^i_{t_0}-M_{\delta}-R))$.
 The existence of such $U$ is guaranteed by Theorem~\ref{Th1}. By  \eqref{g>u1}, \eqref{g>u2} and \eqref{U>g},  $u(t_0,x)\le U(t_0,x)+\delta$  for $x\in\R^N$. Then by Lemma~\ref{Ut} and Lemma~\ref{supsubsolution}, there exist $\omega>0$ and $\mu>0$ such that
\be\label{upb-u}
u(t,x)\le U\left(t+\omega\delta e^{-\mu (t-t_0)}-\omega\delta,x\right)+\delta e^{-\mu (t-t_0)}, \hbox{ for $t\ge t_0$ and $x\in\R^N$}.
\ee

Notice that $U$ is a transition front with sets
$\Gamma_t$
and 
$\Omega_t^{\pm}$  as defined in \eqref{definition of Gamma} and \eqref{definition of Omega} respectively.
By   \eqref{upb-u},  if $\min_{1\le i\le n}\{x\cdot e_i-c_f(t-t_0)+\xi^i_{t_0}-M_{\delta}-R\}\ge R$, then $u(t,x) \le 2\delta\le a$. Together with \eqref{a}, this implies that for $x\in\Gamma_t$,
\be\label{upperb-u}
\min_{1\le i\le n}\left\{x\cdot e_i-c_f(t-t_0)+\xi^i_{t_0}-M_{\delta}-R\right\}\le R.
\ee

Combining \eqref{lowb-u} and \eqref{upperb-u}, we finish the proof.
\end{proof}
\vskip 0.3cm

We then show that $e_i$ and $\xi_t^i$ satisfy the following property.

\begin{lemma}
There is $e_0\in\mathbb{S}^{N-1}$ such that $e_i\cdot e_0>0$ for all $i\in \{1,\cdots,n\}$.
\end{lemma}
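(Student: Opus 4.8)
The plan is to argue by contradiction. Not having such an $e_0$ is, by the separation theorem applied to the origin and the (closed) convex hull of $\{e_1,\dots,e_n\}$, equivalent to $0\in\mathrm{conv}\{e_1,\dots,e_n\}$, so I may fix a nonempty $S\subseteq\{1,\dots,n\}$ and weights $\lambda_i>0$ ($i\in S$) with $\sum_{i\in S}\lambda_i=1$ and $\sum_{i\in S}\lambda_ie_i=0$. For $t\le T$ set $\mathcal A_t:=\{x\in\R^N:\ x\cdot e_i+\xi_t^i\ge0\ \forall i\}$ (a closed convex set), so that $\Gamma_t=\partial\mathcal A_t$ and $\mathcal B_t:=\R^N\setminus\mathcal A_t=\{x:\ \min_i(x\cdot e_i+\xi_t^i)<0\}$. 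I treat first the nondegenerate case, which I only need at one time: suppose $\mathcal A_t^\circ\neq\emptyset$ for some $t\le T$.

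The crucial observation is that $\mathcal A_t$ is \emph{thin}. Put $\Xi(t):=\sum_{i\in S}\lambda_i\xi_t^i$. Since $\sum_{i\in S}\lambda_ie_i=0$ one has $\sum_{i\in S}\lambda_i(x\cdot e_i+\xi_t^i)=\Xi(t)$ for every $x$; as $\Gamma_t\ne\emptyset$ forces $\mathcal A_t\ne\emptyset$ and every summand is $\ge0$ there, it follows that $0\le\Xi(t)<\infty$ and $\min_{i\in S}(x\cdot e_i+\xi_t^i)\le\Xi(t)$ for all $x\in\mathcal A_t$. Given $x\in\mathcal A_t^\circ$, pick $i_0\in S$ attaining this minimum and slide from $x$ along $-e_{i_0}$: the map $s\mapsto\min_i\big((x-se_{i_0})\cdot e_i+\xi_t^i\big)$ is continuous, positive at $s=0$, and $\le0$ at $s=x\cdot e_{i_0}+\xi_t^{i_0}\le\Xi(t)$, hence vanishes at some $s^\ast\le\Xi(t)$, i.e. $x-s^\ast e_{i_0}\in\Gamma_t$. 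Therefore
\[\sup\{d(x,\Gamma_t):\ x\in\mathcal A_t^\circ\}\le\Xi(t)<\infty .\]

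Next I would show that one of $\Omega_t^{\pm}$ equals $\mathcal A_t^\circ$, contradicting \eqref{eq1.3}. Since $\mathcal A_t^\circ$ is convex, hence connected, while $\Omega_t^+,\Omega_t^-$ are relatively open and closed in $\R^N\setminus\Gamma_t=\mathcal A_t^\circ\sqcup\mathcal B_t$, we have $\mathcal A_t^\circ\subseteq\Omega_t^\eta$ for one sign $\eta$, and it suffices to rule out that a connected component $K$ of $\mathcal B_t$ also lies in $\Omega_t^\eta$. Such a $K$ is an open union of some of the half-spaces $\{x\cdot e_i+\xi_t^i<0\}$, so $\partial K$ is a nonempty union of pieces of the corresponding hyperplanes; choose $p\in\partial K$ lying in the relative interior of one facet $\{x\cdot e_i+\xi_t^i=0\}\cap\mathcal A_t$. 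Near $p$ the interface is exactly that flat piece, with $\mathcal A_t^\circ$ on one side and $K$ on the other; if $K\subseteq\Omega_t^\eta$ then a whole neighbourhood of $p$ lies in $\Omega_t^\eta\cup\Gamma_t$, so $p\notin\partial\Omega_t^{-\eta}$, contradicting $\partial\Omega_t^{-\eta}=\Gamma_t\ni p$. Hence $\Omega_t^\eta=\mathcal A_t^\circ$, and the displayed bound gives $\sup\{d(x,\Gamma_t):x\in\Omega_t^\eta\}<\infty$, contradicting the last identity in \eqref{eq1.3}.

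It remains to treat the fully degenerate case $\mathcal A_t^\circ=\emptyset$ for all $t\le T$, in which $\mathcal A_t$, hence $\Gamma_t$, is contained in a hyperplane $\Pi$: if $\Gamma_t\subsetneq\Pi$ then $\R^N\setminus\Gamma_t$ is connected and cannot split into the two nonempty open sets $\Omega_t^{\pm}$, a contradiction; if $\Gamma_t=\Pi$ for $t\le T$ then $u$ has a single moving planar interface, and comparing $u$ with the advancing planar sub- and supersolutions furnished by Lemma~\ref{supsubsolution} — exactly as in the proof of Lemma~\ref{Gtsub} — forces $u$ to be a planar front, making the claim trivial (this configuration is in any case excluded by the standing assumption that the $e_i$ are distinct and the representation \eqref{interface} nonredundant). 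I expect the step demanding the most care to be the middle one, the identification of $\Omega_t^{\pm}$ with the two sides $\mathcal A_t^\circ$ and $\mathcal B_t$ of the min-of-affine interface — i.e. excluding a priori that a side $\Omega_t^{\pm}$ glues $\mathcal A_t^\circ$ to a half-space component of $\mathcal B_t$ — which is precisely where the clean-interface condition $\partial\Omega_t^+=\partial\Omega_t^-=\Gamma_t$ from the definition of a transition front is indispensable.
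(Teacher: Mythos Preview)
Your proof is correct and follows a genuinely different route from the paper's. The paper also argues by contradiction but proceeds iteratively: it first finds $\nu_1\in\mathbb{S}^{N-1}$ with $\nu_1\cdot e_i\ge 0$ for all $i$ (otherwise the half-spaces $\{\nu\cdot e_i<-\delta\}$ cover the sphere and the polytope $E_t=\{\min_i(x\cdot e_i+\xi_t^i)\ge0\}$ is bounded, forcing one of $\Omega_t^\pm$ to be bounded); it then restricts to $J_1=\{j:\nu_1\cdot e_j=0\}$ and seeks $\nu_2\perp\nu_1$ with $\nu_2\cdot e_j\ge 0$ on $J_1$ (otherwise the projection of $E_t$ onto $\nu_1^\perp$ is bounded, trapping one of $\Omega_t^\pm$ in a cylinder), and iterates; the process must terminate in at most $N$ steps since $\R^N$ cannot carry $N{+}1$ orthonormal vectors. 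Your approach replaces this dimension reduction by a single application of the separation theorem---no admissible $e_0$ iff $0\in\mathrm{conv}\{e_i\}$---after which the convex relation $\sum_{i\in S}\lambda_ie_i=0$ immediately yields the uniform bound $\sup_{x\in\mathcal A_t^\circ}d(x,\Gamma_t)\le\Xi(t)$, which is more direct than the paper's successive confinement to bounded sets and cylinders. You also make explicit a point the paper passes over: the paper simply asserts that the bounded (or cylindrical) set ``is either $\Omega_t^+$ or $\Omega_t^-$'', whereas you use $\partial\Omega_t^+=\partial\Omega_t^-=\Gamma_t$ to exclude that one side glues $\mathcal A_t^\circ$ to a half-space component of $\mathcal B_t$. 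The fully degenerate case $\mathcal A_t^\circ=\emptyset$ (in which a redundant representation such as $e_2=-e_1$ could make the lemma literally false) is a shared boundary case that both arguments must set aside; your reduction to a single planar interface and appeal to~\cite{H} is an adequate way to dispose of it.
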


\begin{proof}
Assume by contrary that there is no such $e_0\in \mathbb{S}^{N-1}$. We have\\
{\bf Claim 1.} There exists a $\nu_1\in\mathbb{S}^{N-1}$ such that $\nu_1\cdot e_i\ge 0$ for all $i\in \{1,\cdots,n\}$.

If this is not true, then the open sets $\{\nu: \nu\cdot e_i<0\}$, $i=1$, $\dots$, $n$, form a covering of $\mathbb{S}^{N-1}$. By continuity, we can find a sufficiently small $\delta>0$ such that $D_i:=\{\nu: \nu\cdot e_i<-\delta\}$, $i=1$, $\dots$, $n$, still form a covering of $\mathbb{S}^{N-1}$. For every $\nu\in D_i$,  if $x=r\nu$  belongs to  the set
\[E_t:=\left\{x\in\R^N;\ \min_{1\le i\le n} \{x\cdot e_i +\xi_t^i\}\ge 0\right\},\]
we must have
\[r\nu\cdot e_i +\xi_t^{i}\ge 0.\]
This implies that
\[0\le r<  \frac{\xi^{i}_t}{-\nu\cdot e_i}\leq \frac{\xi^i_t}{\delta}.\]
Hence $E_t$ 
is bounded. Because this set is either $\Omega_t^+$ or $\Omega_t^-$,  we arrive at a contradiction with the last equation of \eqref{eq1.3}. The proof of Claim 1 is thus complete.

Let $J_1$ be the subset of $\{1,\cdots,n\}$ such that $\nu_1\cdot e_j=0$ for $j\in J_1$, while $\nu_1\cdot e_i>0$ for $i\in\{1,\cdots,n\}\setminus J_1$. If $J_1=\emptyset$, then $\nu_1\cdot e_i>0$ for all $i\in \{1,\cdots,n\}$ and we are done. If $J_1$  has only one element, say $J_1=\{j_1\}$, then one can take $0<\varepsilon<\min_{i\not\in J_1} \nu_1\cdot e_i$ such that $(\nu_1+\varepsilon e_{j_1})\cdot e_i>0$ for every $i\in \{1,\cdots,n\}$. 

In the following we assume $J_1$ has at least two elements.  Then we have\\
{\bf Claim 2.} There exists a $\nu_2\perp \nu_1$ such that $\nu_2\cdot e_j\ge 0$ for all $j\in J_1$. 

If for every $\nu\perp \nu_1$, there exists a $j_{\nu}\in J_1$ such that $\nu\cdot e_{j_{\nu}}=\min_{j\in J_1}\nu\cdot e_j<0$, then by the same proof of Claim 1, we deduce that the projection of $\left\{x\in\R^N;\ \min_{j\in J_1}\{x\cdot e_j +\xi_t^j\}\ge 0\right\}$ onto the vertical plane of $\nu_1$ is bounded.  Because
\[\left\{x\in\R^N;\ \min_{1\le i\le n}\{x\cdot e_i +\xi_t^i\}\ge 0\right\} \subset \left\{x\in\R^N;\ \min_{j\in J_1}\{x\cdot e_j +\xi_t^j\}\ge 0\right\},\]
$\Omega_t^+$ or $\Omega_t^-$ is contained in the cylinder over a bounded set in the vertical plane of $\nu_1$. This is still a contradiction with the definition of $\Omega_t^{\pm}$. 

Let $J_2$ be the subset of $J_1$ such that $\nu_2\cdot e_j=0$ for $j\in J_2\subset J_1$ and $\nu_2\cdot e_j>0$ for $j\in J_1\setminus J_2$. If $J_2=\emptyset$, then $\nu_2\cdot e_j>0$ for all $j\in J_1$ and one can take $0<\varepsilon<\min_{i\not\in J_1} \nu_1\cdot e_i$ such that $(\nu_1+\varepsilon \nu_2)\cdot e_i>0$ for all $i\in\{1,\cdots,n\}$. 

If $J_2$   has only one element, say $J_2=\{j_2\}$, then one can take $0<\varepsilon_1<\min_{i\in J_1} \nu_1\cdot e_i$, $0<\varepsilon_2<\min_{i\in J_1\setminus J_2} \nu_2\cdot e_i$ such that $(\nu_1+\varepsilon_1 (\nu_2 +\varepsilon_2 e_{j_2}))\cdot e_i>0$ for every $i\in \{1,\cdots,n\}$. Thus, $J_2$ has at least two elements. 

Repeating this procedure for $N+1$ times, one gets $\nu_1\perp \nu_2\perp \cdots\perp \nu_N\perp \nu_{N+1}$. Since the spatial dimension is $N$,  this is impossible.
\end{proof}
\vskip 0.3cm

\begin{lemma}
For any $t\le T$, 
\be\label{Ocase2}
\Omega_{t}^{\pm}=\left\{x\in\R^N;\ \min_{1\le i\le n}\{x\cdot e_i+\xi_{t}^i\}\lessgtr 0\right\}.
\ee
\end{lemma}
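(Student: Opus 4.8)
Fix $t\le T$ and set $K_t:=\{x\in\R^N:\min_{1\le i\le n}(x\cdot e_i+\xi_t^i)\ge0\}$, an intersection of finitely many closed half-spaces, hence closed and convex. The plan is: first show, in three elementary steps, that $\Gamma_t=\partial K_t$ and that $\R^N\setminus\Gamma_t$ has exactly the two connected components $\mathrm{int}\,K_t=\{\min_i(x\cdot e_i+\xi_t^i)>0\}$ and $\R^N\setminus K_t=\{\min_i(x\cdot e_i+\xi_t^i)<0\}$, whence $\{\Omega_t^-,\Omega_t^+\}$ equals this pair; then, in a fourth step (the real point), fix the orientation $\Omega_t^+=\R^N\setminus K_t$ using $c_f>0$. \emph{Step 1:} by the previous lemma there is $e_0\in\mathbb S^{N-1}$ with $e_i\cdot e_0>0$ for all $i$, so $s\mapsto\min_i(se_0\cdot e_i+\xi_t^i)\to\pm\infty$ as $s\to\pm\infty$; hence $\{\min_i(\cdots)>0\}$ and $\{\min_i(\cdots)<0\}$ are nonempty, and since $x\mapsto\min_i(x\cdot e_i+\xi_t^i)$ is continuous we get $\Gamma_t=\partial\{\min_i(\cdots)\ge0\}=\partial K_t$ and $\R^N\setminus\Gamma_t=\mathrm{int}\,K_t\sqcup(\R^N\setminus K_t)$. \emph{Step 2:} $\mathrm{int}\,K_t$ is convex hence connected, and $\R^N\setminus K_t=\bigcup_iH_i$ with $H_i=\{x\cdot e_i+\xi_t^i<0\}$ a nonempty open half-space; for $i\ne j$ one has $H_i\cap H_j\ne\emptyset$ (clear if $e_i,e_j$ are independent; $e_i=e_j$ may be merged; $e_i=-e_j$ is impossible, as $e_0\cdot e_j=-e_0\cdot e_i<0$ would contradict $e_0\cdot e_j>0$), so $\R^N\setminus K_t$ is connected; thus $\R^N\setminus\Gamma_t$ has exactly these two components. \emph{Step 3:} by \eqref{eq1.3}, $\Omega_t^{\pm}$ are disjoint, open, nonempty (they are unbounded) with union $\R^N\setminus\Gamma_t$, and each is closed in $\R^N\setminus\Gamma_t$ (its complement there is the other), hence a union of connected components; with two components and both $\Omega_t^{\pm}$ nonempty, $\{\Omega_t^-,\Omega_t^+\}=\{\mathrm{int}\,K_t,\ \R^N\setminus K_t\}$.

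\emph{Step 4 (orientation).} Suppose for contradiction that $\Omega_{t_1}^+=\mathrm{int}\,K_{t_1}$ for some $t_1\le T$. Since $u$ and $t\mapsto\xi^i_t$ are continuous, the set of such times is open and closed in $(-\infty,T]$, hence all of $(-\infty,T]$; fix $t_0\ll-1$ with $\Omega^+_{t_0}=\mathrm{int}\,K_{t_0}$. I would first run the facet analysis: far along a facet $\widetilde P_{i,t}$ and away from all ridges, $u$ is squeezed between $\max$ and $\min$ combinations of the planar fronts $g(\pm x\cdot e_i-c_ft+\cdot)$ and the sub/super-solutions of Lemma~\ref{supsubsolution}, which forces $u$ there to be a $1$-d traveling profile with profile $g$ and speed $c_f$; in the present (``wrong'') orientation this pins $\dot\xi^i_t=+c_f$ for every $i$. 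Consequently (take $N=n=2$ for transparency) the vertex $p_t$ of the cone $K_t$ moves with velocity $v=-c_fv_0$ where $v_0\in\mathrm{span}(e_i,e_j)$ satisfies $v_0\cdot e_i=v_0\cdot e_j=1$, so $|v_0|=\sqrt{2/(1+e_i\cdot e_j)}>1$: the vertex (the apex of the $1$-phase) invades the $0$-phase at speed $c_f|v_0|>c_f$. Set $L:=c_f(T-t_0)$ (large) and $x_*:=p_{t_0}-Lv_0$, so $x_*$ lies on the vertex trajectory and $p_{t^*}=x_*$ at $t^*:=t_0+L/c_f\le T$; moreover $d(x_*,K_{t_0})=(L-M_\delta)|v_0|$ modulo the $M_\delta$-enlargement, so $u(t_0,\cdot)\le\delta$ on a ball $B(x_*,R_0)$ with $R_0=(L-M_\delta)|v_0|$. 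Comparing with a standard radially symmetric super-solution of \eqref{RD} whose $0$-core shrinks at speed $c_f+\varepsilon'$ (admissible with $\varepsilon'$ small because $R_0$ is large), one gets $u(t,\cdot)\le\delta$ on $B(x_*,R_0-(c_f+\varepsilon')(t-t_0))$; at $t=t^*$ this ball has radius $\simeq L(|v_0|-1)>0$ and contains $p_{t^*}=x_*$. But $p_{t^*}\in\Gamma_{t^*}$, so $u(t^*,p_{t^*})\ge a$ by \cite[Lemma 4]{GH}, contradicting $u(t^*,x_*)\le\delta<a$. For general $N,n$ one argues the same way at a generic point of a ridge $\mathcal R_{ij,t}$ (the orthogonal velocity is still $-c_fv_0$, $|v_0|>1$), keeping the other constraints slack. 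Hence $\Omega_t^+=\R^N\setminus K_t=\{\min_i(x\cdot e_i+\xi_t^i)<0\}$ for every $t\le T$, i.e.\ \eqref{Ocase2}.

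\textbf{Main obstacle.} Steps 1–3 are routine; the crux is Step 4, the orientation, which is where the unbalanced condition $\int_0^1f>0$ (equivalently $c_f>0$) is genuinely used: if the $1$-phase occupied the convex side of $\Gamma_t$, then a ridge — an edge of that convex cone — would be forced to invade the $0$-phase strictly faster than the planar speed $c_f$, which is impossible. The delicate points are making precise the facet analysis that pins $\dot\xi^i_t$, the continuity of $t\mapsto\xi^i_t$ (so that the orientation is locally constant in $t$), and the shrinking spherical super-solution; the bookkeeping for general $N,n$ near a ridge (ensuring the auxiliary constraints stay inactive) also requires care.
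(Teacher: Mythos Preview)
Your Steps 1--3 are routine and correct. The content is Step 4, and here your route diverges completely from the paper's.

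\textbf{What the paper does.} The paper's proof of this lemma is a two-line application of \cite[Theorem~1.5]{W}: for any bistable transition front, the sublevel set $\{u<1/2\}$ is a subgraph $\{t<h(x)\}$ with $h$ Lipschitz, and any blowing-down limit $h_\infty(x)=\lim_{\varepsilon\to0}\varepsilon h(x/\varepsilon)$ is a homogeneous viscosity solution of the eikonal equation $|\nabla h_\infty|=c_f^{-1}$, hence convex. Under the wrong orientation \eqref{Ocase1}, the blowing-down of $\{u<1/2\}$ in $\{t\le0\}$ would be the concave set $\cup_t\{\min_ix\cdot e_i\le0\}$, contradicting convexity. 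That is the whole argument.

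\textbf{Your approach.} Your geometric picture is correct: if the $1$-phase occupied the convex side, each facet would still move at the planar speed $c_f$ (now outward), and the vertex would then travel at $c_f|v_0|>c_f$ into the $0$-phase, which a radially symmetric super-solution forbids. This is a legitimate alternative, more elementary in that it avoids the black-box result from \cite{W}. But as you yourself flag, two ingredients are only sketched:
\begin{itemize}
\item[(a)] \emph{Pinning $\dot\xi_t^i=+c_f$.} You invoke a ``facet analysis'' squeezing $u$ between planar sub/super-solutions far along a facet. The paper does carry out such an analysis (Lemma~\ref{Gtsub} and Step~2 of the subsequent lemma), but those arguments already \emph{assume} the orientation \eqref{Ocase2} and use the entire solution $U$ of Theorem~\ref{Th1} as a super-solution, which is only available on the correct side. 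Under the wrong orientation you would need a different comparison (no entire solution with the $1$-phase on the convex side is constructed in this paper), so this step is not merely bookkeeping.
\item[(b)] \emph{Continuity of $t\mapsto\xi_t^i$.} Definition~\ref{def1.1} imposes no regularity on $\Gamma_t$ in $t$; the $\xi_t^i$ are given only for each $t$ separately. Your open-and-closed argument for spreading the orientation to all of $(-\infty,T]$ therefore needs justification. One can recover this a posteriori (e.g.\ via Lemma~\ref{Gtsub}-type bounds or by arguing directly with $u$), but it is not free.
\end{itemize}

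In short: your Step~4 is a valid strategy and captures the right mechanism ($c_f>0$ is exactly what breaks the symmetry), but it buys elementarity at the cost of having to redo a fair amount of the facet/speed analysis under the wrong-orientation hypothesis, whereas the paper outsources the whole orientation question to the convexity statement of \cite{W} in one stroke.
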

\begin{proof}
If \eqref{Ocase2} is not true, then
\be\label{Ocase1}
\Omega_{t}^{\pm}=\left\{x\in\R^N;\ \min_{1\le i\le n}\{x\cdot e_i+\xi_{t}^i\}\gtrless 0\right\}
\ee
But by \cite[Theorem 1.5]{W}, $\{u<1/2\}$ has the form $\{t<h(x)\}$, where $h$ is a Lipschitz function on $\R^N$. Moreover, the blowing down limit (up to a subsequence of $\varepsilon\to0$)
\[h_\infty(x):=\lim_{\varepsilon\to 0}\varepsilon h\left(\frac{x}{\varepsilon}\right)\]
is a homogeneous solution of the eikonal equation 
\[|\nabla h_\infty|^2- c_f^{-2}=0.\]
Hence  $h_\infty$ is a convex function. Notice that the blowing down limit of $\{u<1/2\}$ is $\{t<h_\infty(x)\}$. However, \eqref{Ocase1} implies that this blowing down limit (in $\{t\leq 0\}\times\R^N$) is $\cup_t \left\{ \min_{1\le i\le n}x\cdot e_i\leq 0\right\}$, which is not convex. This is a contradiction.
\end{proof}
\vskip 0.3cm

\begin{lemma}
There are $m\in \{1,\cdots,n\}$ vectors $e_i$ satisfying $e_i\neq e_j$ for $i\neq j$ such that for any $t\in\R$, the interfaces $\Gamma_t$ can be written as
\be\label{interface2}
\Gamma_t=\left\{x\in\R^N;\ \min_{1\le i\le m}\{x\cdot e_i-c_f t\}=0\right\},
\ee
and $\Omega_t^{\pm}$ are given by 
\be\label{Omega-pm}
\Omega_t^{\pm}=\left\{x\in\R^N;\ \min_{1\le i\le m}\{x\cdot e_i-c_f t\}\lessgtr 0\right\}.
\ee
\end{lemma}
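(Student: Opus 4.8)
The plan is to leverage the structure already obtained --- for $t\le T$ we have $\Omega_t^{\pm}=\{\min_{1\le i\le n}\{x\cdot e_i+\xi_t^i\}\lessgtr 0\}$ with some $e_0$ such that $e_i\cdot e_0>0$ --- and to promote it to all $t\in\R$ in two stages: first pin down the time dependence of the offsets $\xi_t^i$, then propagate the polytope structure. I would begin with a harmless reduction: discard any hyperplane $\{x\cdot e_i+\xi_t^i=0\}$ that never touches $\partial\mathcal P_t$ (in particular, when $e_i=e_j$ keep only the more binding offset); this changes neither $\Gamma_t$ nor $\Omega_t^{\pm}$ and leaves $m\le n$ distinct directions $e_i$, still with $e_i\cdot e_0>0$. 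Set $a_t^i:=\xi_t^i+c_f t$.

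Next I would show the $a_t^i$ are constant on $(-\infty,T]$. Applying Lemma~\ref{Gtsub} with initial time $t_0\le T$ gives, for every $t\in[t_0,T]$,
\[
\Gamma_t\subset\Big\{x\in\R^N:\ -M\le \min_{1\le i\le m}\{x\cdot e_i+\xi^i_{t_0}-c_f(t-t_0)\}\le M\Big\},
\]
where $M$ is independent of $t_0$ because the constant $M_{\varepsilon}$ of Definition~\ref{def1.1} is uniform in time for a transition front. Taking $x\in\Gamma_t$ deep inside the $i$-th facet --- so $x\cdot e_i+\xi_t^i=0$ while $x\cdot e_j+\xi_t^j\to+\infty$ for $j\ne i$, which forces the minimum above to be attained at index $i$ --- yields $|a^i_t-a^i_{t_0}|\le M$; hence $t\mapsto a_t^i$ has bounded oscillation on $(-\infty,T]$. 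Improving this to $a_t^i\equiv\tau_i$ is the crux. I would use the blow-down argument underlying the previous lemma: by \cite{W}, $\{u<1/2\}=\{t<h(x)\}$ with $h$ Lipschitz and its blow-down $h_\infty$ a one-homogeneous solution of the eikonal equation $|\nabla h_\infty|^2=c_f^{-2}$, hence piecewise linear with the prescribed facet normals; while for $t\le T$ the function $h$ coincides up to a bounded error with the invasion time $x\mapsto\min_i\phi_i(x)$ determined by $a^i_{\phi_i(x)}-c_f\phi_i(x)=-x\cdot e_i$. Matching the conical structure of $h_\infty$ with this representation forces each $\phi_i$ to be affine with gradient $c_f^{-1}e_i$, i.e. $a_t^i\equiv\tau_i$. (Alternatively: renormalize along $s_k\to-\infty$ with $a^i_{s_k}$ convergent, recentre $u(\cdot+s_k,\cdot+y_k)$ at a ridge point $y_k\in\Gamma_{s_k}$, extract an entire limit whose interface has constant offsets, and invoke Proposition~\ref{uniqueness} to rule out a genuine drift.) The same recentring near each facet, together with the one-dimensional rigidity of bistable planar fronts, also yields $|u(t,x)-\underline u(t,x)|\to0$ as $d((t,x),\mathcal R)\to+\infty$ for $t\le T$, where $\underline u=\max_i g(x\cdot e_i-c_f t+\tau_i)$.

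Finally I would propagate to $t\ge T$ and conclude. Let $U:=U(\cdot\,;e_1,\dots,e_m,\tau_1,\dots,\tau_m)$ be the solution of Theorem~\ref{Th1}. Since $u(T,\cdot)$ now satisfies the asymptotic hypothesis of the stability result, Proposition~\ref{Proposition3} gives $\sup_x|u(t,x)-U(t,x)|\to0$ as $t\to+\infty$; and the same holds as $t\to-\infty$, since away from the receding ridges both functions tend to $\underline u$ (by the previous step and by construction of $U$), while near a ridge recentring reduces matters to the two-plane case, where Proposition~\ref{uniqueness} identifies the limits. With $\partial_t U>0$ available from Lemma~\ref{Ut}, a sliding argument in time then closes the proof: the infimum of those $\tau$ for which $U(t+\tau,x)\ge u(t,x)$ everywhere is finite, cannot degenerate as $t\to\pm\infty$ because of the decay just noted, and cannot degenerate as $|x|\to+\infty$ because of the asymptotic rigidity of the previous step, so the usual maximum-principle argument forces it to be $0$; symmetrically $u\equiv U$, and \eqref{interface2}--\eqref{Omega-pm} follow from Remark~\ref{rmk 2.5} applied to $U$ (with the offsets $\tau_i$ absorbed into $\Gamma_t$, $\Omega_t^{\pm}$ as in \eqref{definition of Gamma}--\eqref{definition of Omega}). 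The step I expect to be genuinely delicate is the middle one --- excluding a bounded but non-constant drift of the offsets $a_t^i$ --- whereas the reduction and the final sliding/stability arguments are either immediate or standard given the results already in hand.
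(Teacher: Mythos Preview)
Your proposal contains the right core idea---using Lemma~\ref{Gtsub} to control the offsets $a_t^i:=\xi_t^i+c_ft$---but it overshoots the target in a way that creates a genuine gap, and it also folds in material that belongs to the remainder of the proof of Theorem~\ref{Th3} rather than to this lemma.

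The key point you miss is that \emph{boundedness of $a_t^i$ already suffices}. By Remark~\ref{rmk 2.5}, the interface $\Gamma_t$ is only determined up to bounded Hausdorff distance, so once you know $\sup_{t\le T}|a_t^i|<+\infty$ you may simply replace $\xi_t^i$ by $-c_ft$ and be done for $t\le T$; one further application of Lemma~\ref{Gtsub} from some $t_0\le T$ then gives \eqref{interface2}--\eqref{Omega-pm} for all $t\in\R$. The paper does exactly this: it picks, for each $i$, a time $T_1\le T$ at which the $i$-th facet is large (the definition of $m$ guarantees such a $T_1$ exists), and then the geometric argument with $\overline{M}=3M$ shows $|a_{t_0}^i|\le |a_{T_1}^i|+M$ for every $t_0<T_1$. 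Your bounded-oscillation step is essentially the same computation, but note that you need the facet-visibility at the target time $t$---which is why the paper's choice of $T_1$ matters and why your reduction (discarding hyperplanes that ``never touch $\partial\mathcal P_t$'') is too coarse: a facet may be present at every time yet uniformly small, so it should be removed. The paper's criterion---keep direction $i$ iff $\sup_{t\le T}\sup_{x\in P(e_i,\xi^i_t)\cap\Gamma_t}\min_{j\ne i}\{x\cdot e_j+\xi^j_t\}=+\infty$---is the right one, and the inductive peeling argument shows the Hausdorff distance stays bounded.

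Your attempt to upgrade bounded to constant is where the proposal actually breaks. The blow-down of \cite{W} is scale-invariant: any bounded drift of $a_t^i$ vanishes in the limit, so $h_\infty$ being piecewise linear with the right slopes cannot distinguish $a_t^i\equiv\tau_i$ from $a_t^i$ merely bounded. The alternative renormalisation you sketch has the same defect. So the step you flag as ``genuinely delicate'' is both unnecessary and unachievable by the tools you propose. Finally, your Step~3---proving $u\equiv U$ via stability and sliding---is not part of this lemma at all; it is the content of the subsequent argument in the proof of Theorem~\ref{Th3}, which uses this lemma as input.
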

\begin{proof}
We divide the proof into two steps.

{\bf Step 1. The choice of $m$ vectors. }
If $e_i=e_j$ for some $i\neq j$, then  $d_H(P(e_i,\xi^i_t), P(e_j,\xi^j_t)<+\infty$. By Remark \ref{rmk 2.5}, we can remove one of these two hyperplanes from $\Gamma_t$. So in the following, it is always assumed that $e_i\neq e_j$ for $i\neq j$ and $e_0\cdot e_1=\min_{1\le i\le n} e_0\cdot e_i$. Then  for any $t\le T$, we have
\be\label{P1}
\sup_{x\in P(e_1,\xi^1_t)\cap \Gamma_t}\inf_{2\le i\le n} d\left(x,P(e_i,\xi^i_t)\cap \Gamma_t\right)=+\infty.
\ee
In fact, by taking the sequence $\{x_k:=k(e_0-(e_0\cdot e_1)e_1)\}_{k\in\mathbb{N}}$, we have
\begin{itemize}
    \item $x_k\cdot e_1=0$, which implies that $x_k -\xi^1_t e_1 \in P(e_1,\xi^1_t)\cap \Gamma_t$;
    \item for any $2\le i\le n$,
\[x_k\cdot e_i=k(e_0\cdot e_i -(e_0\cdot e_1)(e_1\cdot e_i))\rightarrow +\infty, \hbox{ as $k\rightarrow +\infty$},\]
which implies that
\[d(x_k-\xi^1_t e_1, P(e_i,\xi^i_t)\cap \Gamma_t)\rightarrow +\infty, \hbox{ as $k\rightarrow +\infty$}.\]
\end{itemize}

By reordering index if necessary, let $m\in\{1,\cdots,n\}$ be the largest integer such that for any $1\le i\le m$,
\[\sup_{t\le T} \sup_{x\in P(e_i,\xi^i_t)\cap \Gamma_t} \min_{j\neq i} \{x\cdot e_j +\xi^i_t\}=+\infty,\]
while this  quantity is bounded for $m+1\le i\le n$. The existence of $m$ is ensured by \eqref{P1}. 

For $t\le T$, define
\[\widetilde{\Gamma}_t:=\left\{x\in\R^N;\ \min_{1\le i\le m}\{x\cdot e_i +\xi^i_t\}=0\right\}.\]
In the following, we will show that
\be\label{finite Hausdorff dist condition}
\sup_{t\le T}d_H\left(\Gamma_t,\widetilde{\Gamma}_t\right)<+\infty.
\ee
Then by Remark \ref{rmk 2.5}, the interface of $u$ can be chosen to be $\widetilde{\Gamma}_t$.

To prove \eqref{finite Hausdorff dist condition}, we will establish
\be\label{finite Hausdorff dist condition 2}
\sup_{t\le T}d_H\left(\widetilde{\Gamma}^\ell_t,\widetilde{\Gamma}^{\ell+1}_t\right)<+\infty, \quad \forall \ell=m,\dots, n-1,
\ee
where
\[\widetilde{\Gamma}_t^l:=\left\{x\in\R^N;\ \min_{1\le i\le l}\{x\cdot e_i +\xi^i_t\}=0\right\}.\]
Notice that $\widetilde{\Gamma}^n_t=\Gamma_t$, $\widetilde{\Gamma}^m_t=\widetilde{\Gamma}_t$.

By the definition, for any $i\geq m+1$,
\be\label{5.15}
\sup_{t\le T} \sup_{x\in P(e_i,\xi^i_t)\cap \Gamma_t} \min_{j\neq i} \left\{x\cdot e_j +\xi^j_t\right\}<+\infty.
\ee
In particular,
\be\label{Gammak-1}
\sup_{t\le T} \sup_{x\in P(e_n,\xi^n_t)\cap \Gamma_t} \min_{1\le j\le n-1} \left\{x\cdot e_j +\xi^j_t\right\}<+\infty.
\ee
This implies that
\be\label{GammatGammak-1} 
\sup_{t\le T} \sup_{x\in\Gamma_t}d(x,\widetilde{\Gamma}_t^{n-1})<+\infty.
\ee

Next we show
\be\label{GammatGammak-1-2}
\sup_{t\le T} \sup_{x\in\widetilde{\Gamma}_t^{n-1}}d(x,\Gamma_t)<+\infty.
\ee
This is equivalent to
\[\sup_{t\le T}\sup_{x\in \widetilde{\Gamma}_t^{n-1}}\left|\min_{1\le i\le n}\left\{x\cdot e_i +\xi^i_{t}\right\}\right|<+\infty.\]
To prove this claim,  assume by the contrary that there is $\{(t_k,x_k)\}_{k\in\mathbb{N}}$ such that
\[x_k \cdot e_i +\xi^i_{t_k}\ge 0, \hbox{ for all $1\le i\le n-1$ and $k\in\mathbb{N}$},\]
and 
\[x_k\cdot e_n +\xi_{t_k}^n\rightarrow -\infty, \hbox{ as $k\rightarrow +\infty$}.\]
The last condition implies that there exists a $\tau_k$ such that $\tau_k\rightarrow +\infty$ as $k\rightarrow +\infty$  and  $(x_k+\tau_k e_0)\cdot e_n +\xi^n_{t_k}=0$. Since $e_i\cdot e_0>0$ for any $i$, $(x_k+\tau_k e_0)\cdot e_i +\xi^i_{t_k}\rightarrow +\infty$ as $k\rightarrow +\infty$. This implies that $x_k+\tau_k e_0\in \Gamma_t$ and $d\left(x_k+\tau_k e_0,\widetilde{\Gamma}_t^{n-1}\right)\rightarrow +\infty$ as $k\rightarrow +\infty$, which is a contradiction with \eqref{GammatGammak-1}. 

Combining \eqref{GammatGammak-1} and \eqref{GammatGammak-1-2}, we get \eqref{finite Hausdorff dist condition 2} in the case of $\ell=n-1$. 

Then we proceed to the proof of \eqref{finite Hausdorff dist condition 2} in the case of $\ell=n-2$. This is similar to the previous case, but we need to show that the condition \eqref{5.15} is preserved after we replace $\Gamma_t$ by $\widetilde{\Gamma}_t^{n-1}$. In other words, 
we claim that for any $m+1\le i\le n-1$,
\be\label{5.16}
\sup_{t\le T} \sup_{x\in P(e_i,\xi^i_t)\cap \widetilde{\Gamma}_t^{n-1}} \min_{1\le j\le n-1,\, j\neq i} \left\{x\cdot e_j +\xi^j_t\right\}<+\infty.
\ee

We prove this by contradiction. So assume for some $m+1\le i\le n-1$, there is a sequence $(t_k,x_k)$ satisfying $x_k\in P(e_i,\xi^i_{t_k})\cap \widetilde{\Gamma}_{t_k}^{n-1}$ and
\be\label{5.17}
\min_{1\le j\le n-1,\, j\neq i} \left\{x_k\cdot e_j +\xi^j_{t_k}\right\}\rightarrow +\infty \quad \hbox{as $k\rightarrow +\infty$}.
\ee
By \eqref{5.15},
\be\label{5.18}
|x_k\cdot e_n +\xi^n_{t_k}|\le M \quad \hbox{for some $M>0$}.
\ee
For $x_k^\prime:=x_k+R(e_n-(e_n\cdot e_i) e_i)$ with any $R>0$, \eqref{5.17} implies that
\[\min_{1\le j\le n-1,\, j\neq i} \left\{x_k^\prime\cdot e_j +\xi^j_{t_k}\right\}\ge R \quad \hbox{ for $k$ large enough},\]
while \eqref{5.18} implies that
\[x_k^\prime\cdot e_n +\xi^n_{t_k}\ge R\left[1-(e_n\cdot e_i)^2\right] -M.\]
These two conditions imply that
\[\min_{1\le j\le n,\, j\neq i} \left\{x_k^\prime\cdot e_j +\xi^j_{t_k}\right\}\ge R\left[1-(e_n\cdot e_i)^2\right] -M \quad \hbox{ for $k$ large enough}.\]
Since $x_k\in P(e_i,\xi^i_{t_k})\cap \widetilde{\Gamma}_{t_k}^{n-1}$, one also has $x_k^\prime\cdot e_i +\xi^i_t=0$. Thus by taking $R\rightarrow +\infty$, one has $x_k^\prime\in P(e_i,\xi^i_{t_k})\cap \Gamma_{t_k}$ and 
\[\min_{1\le j\le n,\, j\neq i} \left\{x_k^\prime\cdot e_j +\xi^j_{t_k}\right\}\rightarrow +\infty,\]
which contradicts \eqref{5.15}.

With \eqref{5.16} in hand, we can proceed as before to get \eqref{finite Hausdorff dist condition 2} with $\ell=n-2$. Then an induction finishes 
 the proof of \eqref{finite Hausdorff dist condition}.

{\bf Step 2.  $\xi_t^i$  can be chosen to be $c_ft$.} Take an arbitrary $i\in \{1,\cdots,m\}$ and let 
$\overline{M}=3M$, where $M$ is defined in Lemma~\ref{Gtsub}. Take $T_1\le T$ so that 
\[\sup_{x\in P(e_i,\xi^i_{T_1})\cap \Gamma_{T_1}}\min_{j\neq i}\left\{x\cdot e_j +\xi^j_{T_1}\right\}\ge \overline{M}.\]
Then there exists an $x_0\in P(e_i,\xi^i_{T_1})\cap\Gamma_{T_1}$ such that
\[\min_{j\neq i}\left\{y\cdot e_j +\xi^j_{T_1}\right\}\ge 0, \hbox{ for all $y\in\overline{B(x_0,\overline{M})}\cap P(e_i,\xi^i_{T_1})\cap \Gamma_{T_1}$}.\]
By Lemma~\ref{Gtsub},  for any $t_0<T_1$,
\be\label{GammaT1}
\Gamma_{T_1}\subset \left\{x\in\R^N;\ -M\le \min_{1\le i\le m}\{x\cdot e_i +\xi^i_{t_0} -c_f (T_1- t_0)\}\le M\right\}.
\ee
Hence 
\[\min_{j\neq i}\left\{y\cdot e_j +\xi^j_{t_0} -c_f (T_1-t_0)\right\}\ge -M, \hbox{ for all $y\in\overline{B(x_0,\overline{M})}\cap P(e_i,\xi^i_{T_1})\cap \Gamma_{T_1}$}.\]
For any $j\neq i$,  take 
\[y=x_0-\overline{M}\frac{e_j-(e_i\cdot e_j)e_i}{|e_j-(e_i\cdot e_j)e_i|}\in\overline{B(x_0,\overline{M})}\cap P(e_i,\xi^i_{T_1})\cap \Gamma_{T_1}.\]
Then
\[x_0\cdot e_j +\xi^j_{t_0} -c_f (T_1- t_0)\ge (x_0-y)\cdot e_j -M\ge \overline{M}\frac{1-(e_i\cdot e_j)^2}{|e_j-(e_i\cdot e_j)\cdot e_i|}\ge 3M.\]
Thus by \eqref{GammaT1}, 
\[\min_{1\le i\le m} \left\{x_0\cdot e_i +\xi^i_{t_0} -c_f (T_1 -t_0)\right\}=x_0\cdot e_i +\xi^i_{t_0} -c_f(T_1-t_0)=-\xi^i_{T_1}+\xi^i_{t_0} -c_f (T_1 -t_0)\in [-M,M].\]
This implies that 
\[\left|\xi^i_{t_0}+c_f t_0\right|<+\infty, \hbox{ for all $t_0<T_1$}.\]
Consequently, for $t_0<T_1$, $\Gamma_{t_0}$ and $\Omega^{\pm}_{t_0}$ can be taken to be 
\[
\Gamma_{t_0}=\left\{x\in\R^N;\ \min_{1\le i\le m}\{x\cdot e_i-c_f t_0\}=0\right\}\hbox{ and }
\Omega_{t_0}^{\pm}=\left\{x\in\R^N;\ \min_{1\le i\le m}\{x\cdot e_i-c_f t_0\}\lessgtr 0\right\}.
\]
Using Lemma~\ref{Gtsub} again, we get \eqref{interface2} and \eqref{Omega-pm}.
\end{proof}
\vskip 0.3cm

\begin{lemma}\label{t0x0}
 For any $(t_0,x_0)\in \R\times\R^N$ satisfying
\[\min_{i=\{1,\cdots,m\}}\left\{x_0\cdot e_i -c_f t_0\right\}\ge 0,\]
it holds that
\[u(t-t_0,x-x_0)\ge u(t,x), \hbox{ for $(t,x)\in\R\times\R^N$}.\]
\end{lemma}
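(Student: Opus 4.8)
The plan is to obtain the inequality by a sliding argument comparing $u$ with its own space--time translate, along the lines of the uniqueness proof (Proposition~\ref{uniqueness}), but supplying a new argument at the last step, since here no asymptotic flatness of $u$ is available a priori. Set $a_i:=x_0\cdot e_i-c_ft_0$, so the hypothesis reads $a_i\ge 0$ for every $i$, and for $\tau\in\R$ put $v_\tau(t,x):=u(t-t_0+\tau,x-x_0)$; each $v_\tau$ solves \eqref{RD} and is again a transition front connecting $0$ and $1$, with interface $\{x:\min_i\{x\cdot e_i-c_ft-a_i-c_f\tau\}=0\}$ whose ``$0$''--side, for $\tau\ge 0$, is contained in that of $u$ (because $a_i\ge 0$ and $c_f>0$). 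Writing $\omega:=\{-R\le\min_i\{x\cdot e_i-c_ft\}\le R\}$ for the bounded strip around the interface of $u$, I would first check that $v_\tau\ge u$ on $\R\times\R^N$ for all large $\tau$: on $\omega$ one has $\min_i\{x\cdot e_i-c_ft-a_i-c_f\tau\}\to-\infty$ uniformly as $\tau\to+\infty$ (the $a_i$ are fixed and $c_f>0$), so $v_\tau\to 1$ uniformly on $\omega$ by the transition-front property, hence $v_\tau\ge u$ on $\omega$ and in particular on the boundaries of the two outer regions $\{\min_i\{x\cdot e_i-c_ft\}\ge R\}$ and $\{\min_i\{x\cdot e_i-c_ft\}\le -R\}$; there $u\le\sigma$, resp.\ $u\ge1-\sigma$, so $f$ is monotone near the relevant values by \eqref{sigma}, and the maximum-principle argument of Step~1 of Proposition~\ref{uniqueness} (with $\varepsilon_\ast=\inf\{\varepsilon>0:v_\tau\ge u-\varepsilon\}$ on each region and linear parabolic estimates at points at bounded distance from the boundary) upgrades the comparison to all of $\R\times\R^N$. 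Since also $v_\tau\to 0$ locally uniformly as $\tau\to-\infty$, the number $\tau_\ast:=\inf\{\tau\in\R:v_\tau\ge u\text{ on }\R\times\R^N\}$ is a well-defined real number, and by closedness $v_{\tau_\ast}\ge u$.

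Next I would show $\tau_\ast\le 0$. Suppose $\tau_\ast>0$. If $\inf_\omega(v_{\tau_\ast}-u)>0$, then, $u$ and its derivatives being globally bounded, $\|v_{\tau_\ast-\eta}-v_{\tau_\ast}\|_{L^\infty}\to 0$ as $\eta\to 0^+$, so $v_{\tau_\ast-\eta}\ge u$ on $\omega$ for small $\eta>0$ and, running the outer-region argument again, $v_{\tau_\ast-\eta}\ge u$ everywhere, contradicting the definition of $\tau_\ast$. Otherwise $\inf_\omega(v_{\tau_\ast}-u)=0$: there are $(t_k,x_k)\in\omega$ with $v_{\tau_\ast}(t_k,x_k)-u(t_k,x_k)\to 0$, and by parabolic estimates the translates $u_k:=u(\cdot+(t_k,x_k))$ and $v_k:=v_{\tau_\ast}(\cdot+(t_k,x_k))$ converge along a subsequence to entire solutions $u_\infty\le v_\infty$ with $v_\infty=u_\infty(\cdot+p)$, where $p:=(\tau_\ast-t_0,-x_0)$, and $(v_\infty-u_\infty)(0,0)=0$; moreover $u_\infty$ is again a transition front connecting $0$ and $1$, with interface $\{x:\min_{i\in I}\{x\cdot e_i-c_ft+\eta_i\}=0\}$ for a nonempty index set $I$ (those $i$ for which $x_k\cdot e_i-c_ft_k$ stays bounded, which is nonempty because $(t_k,x_k)\in\omega$) and limits $\eta_i$, so in particular $0<u_\infty(0,0)<1$.

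The main obstacle is precisely this last case, where, unlike in Proposition~\ref{uniqueness}, no asymptotic information on $u$ can be invoked. It is handled as follows: $w:=v_\infty-u_\infty\ge 0$ solves a linear parabolic equation $\partial_tw-\Delta w=c(t,x)w$ with $c$ bounded and vanishes at the interior point $(0,0)$, so the parabolic strong maximum principle gives $w\equiv 0$ on $\{t\le 0\}$, whence, by uniqueness for the forward Cauchy problem, $w\equiv 0$ on $\R\times\R^N$; thus $u_\infty$ is invariant under the translation $-p=(t_0-\tau_\ast,x_0)$. But $x_0\cdot e_i-c_f(t_0-\tau_\ast)=(x_0\cdot e_i-c_ft_0)+c_f\tau_\ast\ge c_f\tau_\ast>0$ for every $i$, so iterating $-p$ drives $\min_{i\in I}\{x\cdot e_i-c_ft+\eta_i\}$ to $+\infty$, i.e.\ sends every point deep into the ``$0$''--side of $u_\infty$; the transition-front property then forces $u_\infty(\cdot+k(-p))\to 0$ as $k\to+\infty$, hence $u_\infty\equiv 0$, contradicting $0<u_\infty(0,0)<1$. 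Therefore $\tau_\ast\le 0$. Finally, since $u$ is a transition front connecting $0$ and $1$ for unbalanced bistable $f$, $\partial_tu>0$ by \cite{GH}; combined with $\tau_\ast\le 0$ this gives $u(t-t_0,x-x_0)=v_0(t,x)\ge v_{\tau_\ast}(t,x)\ge u(t,x)$, and the strictness under $\min_i\{x_0\cdot e_i-c_ft_0\}>0$ follows by applying the strong maximum principle to $v_0-u\ge 0$ (if it vanished somewhere, $u$ would be invariant under a translation pushing points into its ``$0$''--side, forcing $u\equiv 0$).
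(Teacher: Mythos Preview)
Your proof is correct and follows essentially the same sliding strategy as the paper: show $v_\tau\ge u$ for large $\tau$ via the strip/outer-region decomposition, define the optimal $\tau_\ast$, and in the touching case iterate the space--time translation to push points into the $0$-side of the front, yielding a contradiction with $u$ being bounded away from $0$ near the interface. The only cosmetic differences are that (i) the paper restricts the infimum to $s>0$ from the outset and shows $s_\ast=0$, whereas you allow $\tau_\ast\in\R$, show $\tau_\ast\le 0$, and then invoke $\partial_t u>0$ from \cite{GH} to conclude; and (ii) in the touching case, the paper applies linear parabolic (Harnack) estimates directly along the sequence $(t_k,x_k)$ to propagate the vanishing of $v_{s_\ast}-u$ through repeated shifts by $(t_0-s_\ast,x_0)$, while you first pass to a limit $u_\infty$, use the strong maximum principle and forward uniqueness to obtain exact invariance $u_\infty(\cdot)=u_\infty(\cdot+(t_0-\tau_\ast,x_0))$, and then iterate. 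Both routes implement the same idea and are standard equivalents; your version has the minor advantage of avoiding any concern about the sign of the time shift when applying Harnack. One small point: your strip $\omega$ should be defined with the constant $M_\sigma$ from Definition~\ref{def1.1} (so that $u\le\sigma$ and $u\ge 1-\sigma$ hold in the two outer regions and $u$ is bounded away from $0,1$ on $\omega$), not with the $R$ of \eqref{eq-R}, which pertains to $g$.
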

\begin{proof}
The idea of the proof is to compare $u(t-t_0,x-x_0)$ and $u(t,x)$ by the sliding method.
This is similar to the proof  of \cite[Theorem 1.11]{BH2}. We only outline the main strategy. 

Notice that both $u(t-t_0,x-x_0)$ and $u(t,x)$ are transition fronts with the same $\Gamma_t$ and $\Omega_t^\pm$ for any $t\in\R$, where
\[
\Gamma_t=\left\{x\in\R^N;\ \min_{1\le i\le m}\{x\cdot e_i-c_f t\}=0\right\} \hbox{ and } \Omega_t^{\pm}=\left\{x\in\R^N;\ \min_{1\le i\le m}\{x\cdot e_i-c_f t\}\lessgtr 0\right\}.\]

{\bf Step 1.}  $u(t-t_0+s,x-x_0)\ge u(t,x)$ for all $s>0$ large enough. 

This is the same as in the proof of \cite[Theorem 1.11]{BH2}. 

{\bf Step 2.} Then we define
\[s_\ast=\inf\left\{s>0;\ u(t-t_0+s^\prime,x-x_0)\ge u(t,x) \hbox{ in $\R\times\R^N$ for all $s^\prime\ge s$}\right\}.\]
To conclude the lemma, it is sufficient to show that $s_\ast=0$.

Assume by the contrary that $s_\ast>0$.
Two cases may occur: either 
\[\inf_{\{d(x,\Gamma_t)\le M_{\sigma}\}}\left[u(t-t_0+s_\ast,x-x_0)- u(t,x)\right]>0\]
or 
\[\inf_{\{d(x,\Gamma_t)\le M_{\sigma}\}}\left[u(t-t_0+s_\ast,x-x_0)- u(t,x)\right]=0,\] where $M_{\sigma}>0$ is defined by Definition~\ref{def1.1}.

In the first case, because $\partial_tu$ is globally bounded, there is $\eta_0\in (0,s_\ast)$ such that for any $\eta\in [0,\eta_0]$,
\[\inf_{\{d(x,\Gamma_t)\le M_{\sigma}\}}\left[u(t-t_0+s_\ast-\eta,x-x_0)- u(t,x)\right]>0.\]  
By the same proof in Step 1, we get
\[u(t-t_0+s_\ast-\eta,x-x_0)\ge u(t,x), \quad \text{in} ~ \R\times\R^N.\] 
This is a   contradiction with the minimality of $s_\ast$. 

In the second case, there is a sequence $(t_k,x_k)_{k\in\mathbb{N}}$ such that
\[d(x_k,\Gamma_{t_k})\le M_{\sigma} \hbox{ and } u(t_k-t_0+s_\ast,x_k-x_0)- u(t_k,x_k)\rightarrow 0 \hbox{ as $k\rightarrow +\infty$}.\]
Since $u(t-t_0+s_\ast,x-x_0)$ and $u(t,x)$ are solutions of \eqref{RD}, by linear parabolic estimates, we get
\[u(t_k,x_k)- u(t_k+t_0-s_\ast,x_k+x_0)\rightarrow 0 \hbox{ as $k\rightarrow +\infty$}.\]
An induction gives
\[u(t_k,x_k)- u(t_k+ \beta t_0-\beta s_\ast,x_k+\beta x_0)\rightarrow 0 \hbox{ as $k\rightarrow +\infty$ for all $\beta\in \mathbb{N}$}.\]
Then by the fact that
\[(x_k+\beta x_0)\cdot e_i -c_f(t_k+\beta t_0-\beta s_\ast)\rightarrow +\infty, \hbox{ as $\beta\rightarrow +\infty$ for all $i\in\{1,\cdots,m\}$},\]
we deduce that $u(t_k,x_k)\rightarrow 0$ as $k\rightarrow +\infty$. This is impossible since $d(x_k,\Gamma_{t_k})\le M_{\sigma}$ and hence $u(t_k,x_k)\ge \delta>0$ for some $\delta>0$.
\end{proof}
\vskip 0.3cm

Now we are ready to prove Theorem~\ref{Th3}.
\vskip 0.3cm
\begin{proof}[Proof of Theorem~\ref{Th3}]
If $m=1$, then $u$ is an almost planar front. By \cite[Theorem 2.6]{H}, it is an exact planar front. So we assume $m\ge 2$ in the following. 

{\bf Step 1.}
By the sliding method and \eqref{Omega-pm}, one can show that for every $i\in\{1,\cdots,m\}$ and all $\tau$ large enough, 
\[g(x\cdot e_i -c_f t +\tau)\le u(t,x) \hbox{ for $(t,x)\in\R\times\R^N$}.\]
We omit the details here by referring to \cite{BH2} for the sliding method.

For every $i\in \{1,\cdots,m\}$, define
\[\tau_i^\ast =\inf\left\{\tau\in\R;\ g(x\cdot e_i -c_f t +\tau^\prime)\le u(t,x) \hbox{ in $\R\times\R^N$ for all $\tau^\prime\ge \tau$}\right\}.\]
Then we claim that
\be\label{4.1}
\inf_{\{(t,x): d(x,\Gamma_t)\le M_{\sigma}\}} \left[u(t,x)-g(x\cdot e_i -c_f t+\tau_i^\ast)\right]=0.
\ee
In fact, if this is not true, then as in the proof of Lemma~\ref{t0x0}, there is $\eta_0\in (0,\tau_i^\ast]$ such that $g(x\cdot e_i -c_f t+\tau_i^\ast-\eta)\le u(t,x)$ for all $\eta\in [0,\eta_0]$, which contradicts the definition of $\tau_i^\ast$. 

By \eqref{4.1}, there is a sequence $\{(t_{k,1},x_{k,1})\}_{k\in\mathbb{N}}$ satisfying $d(x_{k,1},\Gamma_{t_{k,1}})\le M_{\sigma}$ and
\be\label{xnyn}
\lim_{k\rightarrow +\infty}\left[u(t_{k,1},x_{k,1})-g(x_{k,1}\cdot e_i -c_f t_{k,1}+\tau_i^\ast)\right]= 0.
\ee
The fact that $d(x_{k,1},\Gamma_{t_{k,1}})\le M_{\sigma}$  implies that $\sigma\leq u(t_{k,1},x_{k,1})\leq 1-\sigma$.  This combined with \eqref{xnyn} then implies that  $|x_{k,1}\cdot e_i -c_f t_{k,1}|$ is bounded.

By the definition of $\tau_i^\ast$, we have
\[u(t,x)\geq \max_{1\leq i\leq m} g(x\cdot e_i -c_f t+\tau_i^\ast).\]
If $\min_{j\in\{1,\cdots,m\}\setminus\{i\}} \{x_{k,1}\cdot e_j -c_f t_{k,1}\}$ is also bounded,  this implies that
\[\liminf_{k\rightarrow +\infty}\left[u(t_{k,1},x_{k,1})-g(x_{k,1}\cdot e_i -c_f t_{k,1}+\tau_i^\ast)\right]> 0,\]
which is a contradiction with \eqref{xnyn}.

In conclusion,  $|x_{k,1}\cdot e_i -c_f t_{k,1}|$ is bounded and 
\be\label{As-tkxk}
\min_{j\in\{1,\cdots,m\}\setminus\{i\}} \{x_{k,1}\cdot e_j -c_f t_{k,1}\}\rightarrow +\infty \hbox{ as } k\rightarrow +\infty.
\ee
In the following, we assume without loss of generality that $(t_{k,1},x_{k,1})\in \widetilde{P}(e_i,-c_f t_{k,1})$. This is done by choosing the  nearest point on $\widetilde{P}(e_i,-c_f t_{k,1})$ to $(t_{k,1},x_{k,1})$. Notice that by standard linear parabolic estimates, \eqref{xnyn} still holds. 

{\bf Step 2.} By the sliding method again, one can prove that for $e_i$ and all $\eta_i$ ($i=1,\cdots,m$) negative enough,  
\[U(t,x;e_1,\cdots,e_m,\eta_1,\cdots,\eta_m)\ge u(t,x) \quad \hbox{ in}~ \R\times \R^N.\]
After decreasing $\eta_i$, we may assume without loss of generality that there exists an $r>0$ such that for any $(t,x)\in \R\times\R^N$ satisfying $d((t,x),\widetilde{P}(e_i,-c_f t))\le M_{\sigma}$, we have
\be\label{etai}
g(x\cdot e_i-c_f t+\eta_i)-u(t,x)\ge r.
\ee 
In the following we consider only the sliding procedure in the $e_1$ direction. So $\eta_i$, $i=2$, $\dots$, $m$ will be fixed. Define
\[\eta_1^\ast=\sup\left\{\eta\in\R;\ U(t,x;e_1,\cdots,e_m,\eta^\prime,\cdots,\eta_m) \ge u(t,x) \hbox{ in } \R^N \hbox{ for all $\eta^\prime\le \eta$}\right\},\]
where $U(t,x;\eta^\prime,\cdots,\eta_m)$ stands for $U(t,x;e_1,\cdots,e_m,\eta^\prime,\cdots,\eta_m)$ for short.
Then as in Step 1, we have
\[\inf_{\{(t,x): d(x,\Gamma_t)\le M_{\sigma}\}}\left[U(t,x;\eta_1^\ast,\cdots,\eta_m)- u(t,x)\right]=0.\]
Hence there is a sequence $\{(t_{k,2},x_{k,2})\}_{k\in\mathbb{ N}}$ such that $d(x_{k,2},\Gamma_{t_{k,2}})\le M_{\sigma}$ and
\be\label{eq-t-sequence}
U(t_{k,2},x_{k,2};\eta_1^\ast,\cdots,\eta_m)- u(t_{k,2},x_{k,2})\rightarrow 0 \hbox{ as $k\rightarrow +\infty$}.
\ee
If $\min_{j\in\{2,\cdots,m\}} \{x_{k,2}\cdot e_i -c_f t_{k,2}\}$ is bounded, by \eqref{etai} and \eqref{compareUu}, one can take $x_{k,2}^\prime$ on $\widetilde{P}(e_j,-c_ft_{k,2})$ for some $j\in \{2,\cdots,m\}$ such that $|x_{k,2}-x_{k,2}^\prime|$ is bounded and 
\[U(t_{k,2},x_{k,2};\eta_1^\ast,\cdots,\eta_m)- u(t_{k,2},x_{k,2})\ge \frac{r}{2}>0.\]
 However, standard estimates for linear parabolic equations imply that 
\[U(t_{k,2},x^\prime_{k,2};\eta_1^\prime,\cdots,\eta_m)- u(t_{k,2},x^\prime_{k,2})\rightarrow 0 \hbox{ as } k\rightarrow +\infty.\]
This is a contradiction. 

In conclusion, $|x_{k,2}\cdot e_1 -c_f t_{k,2}|$ is bounded and 
\be\label{txty}
\min_{j\in \{2,\cdots,m\}} \{x_{k,2}\cdot e_j -c_f t_{k,2}\}\rightarrow +\infty \hbox{ as } k\rightarrow +\infty.
\ee
As in Step 1, one can assume without loss of generality that $(t_{k,2},x_{k,2})\in \widetilde{P}(e_1,-c_ft_{k,2})$.

{\bf Step 3.} We need to prove that $\eta_1^\ast=\tau_1^\ast$. By \eqref{AsyES}, we have $\eta_1^\ast\le \tau_1^\ast$. Assume by the contrary that $\eta_1^\ast<\tau_1^\ast$. Since $g$ is a decreasing function, there exists an $\varepsilon$ satisfying
\be\label{epsilon2}
3\varepsilon<g(\eta_1^\ast)-g(\tau_1^\ast)
\ee 
Since $(t_{k,2},x_{k,2})$ satisfies \eqref{eq-t-sequence}, there is $M>0$ large enough such that $(t_{M,2},x_{M,2})\in\widetilde{P}(e_1,-c_f t_{M,2})$ and
\[|U(t_{M,2},x_{M,2};\eta_1^\ast,\cdots,\eta_m)- u(t_{M,2},x_{M,2})|<\varepsilon.\]
Since $U(t,x;\eta_1^\ast,\cdots,\eta_m)$ satisfies \eqref{AsyES} for $e_i$ ($i=1,\cdots,m$) and $\eta_1^\ast,\cdots,\eta_m$, by \eqref{txty}, after increasing $M$, one has 
\[|g(x_{M,2}\cdot e_1 -c_f t_{M,2} +\eta_1^\ast) -u(t_{M,2},x_{M,2})|<2\varepsilon.\]
By \eqref{xnyn}, there is $K>0$ large enough such that $(t_{K,1},x_{K,1})\in\widetilde{P}(e_1,-c_f t_{K,1})$ satisfies
\[\min_{2\le j\le m} \{x_{K,1}\cdot e_j -c_f t_{K,1}\}\ge \max_{2\le j\le m} \{x_{M,2}\cdot e_j -c_f t_{M,2}\},\]
and 
\[|g(x_{K,1}\cdot e_1 -c_f t_{K,1} +\tau_1^\ast) -u(t_{K,1},x_{K,1})|<\varepsilon.\]
Let $(t_0,x_0):=(t_{K,1}-t_{M,2},x_{K,1}-x_{M,2})$. It belongs to $\widetilde{P}(e_1,-c_f t_0)$ and satisfies $\min_{2\le j\le m} \{x_0\cdot e_j -c_f t_0\}\ge 0$.
Then we have
\begin{align*}
g(\tau_1^\ast)&=g(x_{K,1}\cdot e_1-c_f t_{K,1} +\tau_1^\ast)\\
&
\ge u(t_{K,1},x_{K,1})-\varepsilon\\
&=u(t_{M,2}-t_0,x_{M,2}-x_0)-\varepsilon \\
&\ge u(t_{M,2},x_{M,2})-\varepsilon \quad\quad \text{(by Lemma~\ref{t0x0})}\\
&\ge g(x_{M,2}\cdot e_1-c_f t_{M,2} +\eta_1^\ast) -3\varepsilon\\
&= g(\eta_1^\ast)-3\varepsilon.
\end{align*}
This is a contradiction with \eqref{epsilon2}. Therefore, $\eta_1^\ast\equiv \tau_1^\ast$.

Repeating the above argument for $i=2$, $\dots$, $m$, one can define $\eta_i^\ast$ and prove that $\eta_i^\ast\equiv \tau_i^\ast$ for all $i=\{1,\cdots,m\}$. In other words,
\[\max_{1\le i\le n} \{g(x\cdot e_i-c_f t+\tau_i^\ast)\}\le u(t,x)\le U(t,x;\eta_1,\cdots,\tau_i^\ast,\cdots,\eta_m), \hbox{ for every $i\in \{1,\cdots,m\}$}.\]
Thus $u$ satisfies \eqref{AsyES} for $\{e_1,\cdots, e_m, \tau_1^\ast,\cdots,\tau_m^\ast\}$. By the uniqueness result in Proposition \ref{uniqueness}, $u(t,x)\equiv U(t,x;\tau_1^\ast,\cdots,\tau_m^\ast)$.
\end{proof}
\vskip 0.3cm

\noindent
\textbf{Acknowledgement.} H. Guo would like to thank Prof. Xinfu Chen for many discussions. H. Guo was supported by the fundamental research funds for the central universities and the National Natural Science Foundation of China under Grant 12101456. K. Wang was supported by the National Natural Science Foundation of China under Grant 12221001.

\end{document}